\numberwithin{equation}{section}
\def\cR{\mathcal{R}}
\def\N{\mathbb{N}}
\newtheorem{theorem}{Theorem}[section]
\newtheorem{lemma}[theorem]{Lemma}
\newtheorem{proposition}[theorem]{Proposition}
\newtheorem{corollary}[theorem]{Corollary}
\theoremstyle{definition}
\newtheorem{definition}[theorem]{Definition} 
\newtheorem{procedure}[theorem]{Procedure} 
\newtheorem{algorithm}[theorem]{Algorithm}
\newtheorem{remark}[theorem]{Remark}
\newtheorem{example}[theorem]{Example}
\begin{document}

\title[Rees algebras of filtrations]
{Rees algebras of 
filtrations of covering polyhedra and integral closure of powers of monomial ideals} 
\thanks{The first author was supported by a scholarship from CONACYT,
Mexico. The third author was supported by SNI, Mexico.}

\author[G. Grisalde]{Gonzalo Grisalde}
\address{
Departamento de
Matem\'aticas\\
Centro de Investigaci\'on y de Estudios
Avanzados del
IPN\\
Apartado Postal
14--740 \\
07000 Mexico City, Mexico
}
\email{gjgrisalde@math.cinvestav.mx}

\author[A. Seceleanu]{Alexandra Seceleanu}
\address{Department of Mathematics,
203 Avery Hall, Lincoln, NE 68588.
}
\email{aseceleanu@unl.edu}

\author[R. H. Villarreal]{Rafael H. Villarreal}
\address{
Departamento de
Matem\'aticas\\
Centro de Investigaci\'on y de Estudios
Avanzados del
IPN\\
Apartado Postal
14--740 \\
07000 Mexico City, Mexico
}
\email{vila@math.cinvestav.mx}
%\thanks{*Corresponding author}

%\keywords{Monomial ideal, Rees algebra, filtration, integral closure,
%symbolic powers, linear programming, resurgence, Waldschmidt constant, irreducible
%representation, covering polyhedra, normal ideal}
\subjclass[2020]{Primary 13C70; Secondary 13F20, 13F55, 05E40, 13A30, 13B22} 

\dedicatory{Dedicated to Professor J\"urgen Herzog on the occasion of his
$80$th birthday}  

\begin{abstract} 
The aims of this work are to study Rees algebras of filtrations of
monomial ideals associated to
covering polyhedra of rational matrices with non-negative entries
and non-zero columns using
combinatorial optimization and integer programming, and to study
powers of monomial 
ideals and their integral
closures using irreducible decompositions and polyhedral
geometry. We 
study the Waldschmidt constant and the ic-resurgence of the filtration 
associated to a covering polyhedron and show how to compute these
constants using linear programming. Then
we show a lower bound for the ic-resurgence of the ideal of covers of a
graph and prove that the lower bound is attained when the graph is
perfect. We also show lower bounds for the ic-resurgence of the edge
ideal of a graph and give an algorithm to compute the asymptotic
resurgence of squarefree monomial ideals. 
A classification of when Newton's polyhedron is the irreducible 
polyhedron is presented using integral closure. 
\end{abstract}

\maketitle 

\section{Introduction}\label{section-intro}
Let $S=K[t_1,\ldots,t_s]$ be a polynomial ring over a field $K$. The monomials of $S$ are denoted
by $t^a:=t_1^{a_1}\cdots t_s^{a_s}$, $a=(a_1,\dots,a_s)$ in $\mathbb{N}^s$, where
$\mathbb{N}=\{0,1,\ldots\}$. Let $I$ be a monomial ideal of $S$ minimally generated by
the set of monomials $G(I):=\{t^{v_1},\ldots,t^{v_q}\}$.  
The \textit{incidence matrix} of $I$, denoted by $A$,
is the $s\times q$ matrix with column vectors $v_1,\ldots,v_q$. The
$\textit{covering polyhedron}$ of $I$, denoted by
$\mathcal{Q}(I)$, is the rational polyhedron
$$
\mathcal{Q}(I):=\{x\vert\, x\geq 0;\,xA\geq 1\},
$$
where $1=(1,\ldots,1)$. The \textit{Newton
polyhedron} of $I$, denoted ${\rm NP}(I)$, is the 
integral polyhedron 
\begin{equation}\label{NP-def}
{\rm NP}(I)=\mathbb{R}_+^s+{\rm 
conv}(v_1,\ldots,v_q),
\end{equation}
where $\mathbb{R}_+=\{\lambda\in\mathbb{R}\vert\, \lambda\geq
0\}$. 
This polyhedron is the convex hull of the exponent vectors
of the monomials in $I$ \cite[p.~141]{Eisen} and is equal to $\{x\in\mathbb{R}^s\vert\,
x\geq 0;\,xB\geq 1\}$ for some rational matrix $B$ with non-negative
entries (Proposition~\ref{np-qa}). The \textit{integral closure} of
 $I^n$ can be described as 
\begin{equation}\label{jun21-21}
\overline{I^n}=
(\{t^a\vert\, a/n\in{\rm NP}(I)\})
\end{equation}
for all $n\geq 1$ \cite[Proposition~3.5(a)]{reesclu}. 
If $I$ is squarefree, the $n$-th \textit{symbolic
power} of $I$ is
given by 
\begin{equation}\label{jun21-21-1}
I^{(n)}=(\{t^a\vert\, a/n\in\mathcal{Q}(I^\vee)\}),
\end{equation}
where $I^\vee$ is the Alexander dual of $I$ \cite[p.~78]{reesclu}.
The covering polyhedron $\mathcal{Q}(I^\vee)$ is called the \textit{symbolic polyhedron} of
$I$ and is denoted by ${\rm SP}(I)$ \cite[p.~50]{Cooper-symbolic}. 
If $I = \overline{I}$, $I$ is said to be 
{\em complete}. If all the powers $I^n$ are
complete, $I$  is said to be \textit{normal}. 

We now introduce a central notion that generalizes the Newton polyhedron and
the covering polyhedron of a monomial ideal. 
A \textit{covering polyhedron} is a rational polyhedron of the 
form 
$$
\mathcal{Q}(C):=\{x\vert\, x\geq 0;\,xC\geq 1\},
$$
for some $s\times m$ rational matrix $C$ with entries in
$\mathbb{Q}_+=\{\lambda\in\mathbb{Q}\vert\, \lambda\geq 0\}$ and
with non-zero columns. 
A covering polyhedron is of blocking
type in the sense of \cite[p.~114]{Schr} and the Newton polyhedron is
a covering polyhedron (Lemma~\ref{blocking-type}).

To a covering polyhedron $\mathcal{Q}(C)$, we associate the 
decreasing sequence $\mathcal{F}=\{I_n\}_{n=0}^\infty$ of monomial ideals of
$S$ given by 
\begin{equation}\label{sequence-def}
I_n:=(\{t^a\vert\, a/n\in\mathcal{Q}(C)\}),\ n\geq 1,\,\ I_0=S.
\end{equation}
\quad The sequence $\mathcal{F}$ satisfies $\overline{I_n}=I_n$ for all
$n\geq 1$ and is a \textit{filtration} of ideals of $S$, 
that is, $I_{n+1}\subset I_n$, $I_0=S$, and $I_{k}I_n\subset
I_{k+n}$ for all $k,n\in\mathbb{N}$ (Lemma~\ref{filtration}). In certain cases the filtration $\mathcal{F}$ is
\textit{strict}, that is, $I_{n+1}\subsetneq I_n$ for all $n\geq 0$ (Lemma~\ref{filtration}). 
We call $\mathcal{Q}(C)$ the \textit{covering
polyhedron} of $C$. The filtration
$\mathcal{F}$ 
is called the \textit{filtration} associated 
to $\mathcal{Q}(C)$. If $I$ is a monomial ideal, the filtration
associated to the Newton polyhedron ${\rm NP}(I)$ of $I$ is the filtration 
$\{\overline{I^n}\}_{n=0}^\infty$ of integral closure of powers of
$I$ (Eq.~\eqref{jun21-21}), and if $I$ is a squarefree monomial ideal,
the filtration associated to $\mathcal{Q}(I^\vee)$ is the filtration
$\{I^{(n)}\}_{n=0}^\infty$ of
symbolic powers of $I$ (Eq.~\eqref{jun21-21-1}).  

The \textit{initial degree} of a monomial ideal $I$, denoted by
$\alpha(I)$, is the least degree of a minimal generator of $I$. We
associate to $\mathcal{F}$ the function 
$\alpha_\mathcal{F}\colon\mathbb{N}_+\rightarrow\mathbb{N}_+$ given by
\begin{equation}\label{af-eq}
\alpha_\mathcal{F}(n):=\alpha(I_n)=\min\{\deg(t^a)\vert\, t^a\in
I_n\},
\end{equation}
where $\mathbb{N}_+=\{1,2,\ldots\}$. The \textit{Waldschmidt constant} of
$\mathcal{F}$, denoted $\widehat{\alpha}(\mathcal{F})$, is defined as  
\begin{equation}\label{waldschmidt-eq}
\widehat{\alpha}(\mathcal{F}):=\lim_{n\rightarrow\infty}{\alpha_{\mathcal{F}}(n)}/{n}.
\end{equation}
\quad This limit exists and is equal to the infimum of 
$\alpha_\mathcal{F}(n)/n$, $n\geq 1$ (Lemma~\ref{subadditive-g}). One
of our main results shows that $\widehat{\alpha}(\mathcal{F})$ can be
expressed as the optimal value of a linear program. 

\noindent \textbf{Theorem~\ref{schrijver-number-lp}.}\textit{ 
Let $\mathcal{F}=\{I_n\}_{n=0}^\infty$ be the filtration associated to the covering polyhedron 
$\mathcal{Q}(C)$.  
If $y=(y_1,\ldots,y_s)$, then the linear program
\begin{enumerate}
\item[] {\rm minimize} $y_1+\cdots+y_s$
\item[] {\rm subject to}
\item[] $yC\geq 1$ {\rm and }$y\geq 0$
\end{enumerate}
has an optimal value equal to $\widehat{\alpha}(\mathcal{F})$, which is 
attained at a rational vertex $\beta$ of $\mathcal{Q}(C)$.
}

If $\mathcal{Q}=\mathcal{Q}(C)$ and $\alpha(\mathcal{Q})$ is the minimum of all
$|v|=\sum_{i=1}^sv_i$ with $v=(v_1,\ldots,v_s)$ a vertex of
$\mathcal{Q}$, then
$\alpha(\mathcal{Q})=\widehat{\alpha}(\mathcal{F})$
(Corollary~\ref{schrijver-constant}). This result was shown in  
\cite[Corollary~6.3]{Cooper-symbolic} when $\mathcal{Q}$ is the
symbolic polyhedron of a monomial ideal $I$ and $\mathcal{F}$ is the
filtration of symbolic powers of $I$. A covering polyhedron is
\textit{integral} if and only if it has only integral vertices \cite[p.~232]{Schr}. We show that
$\alpha_{\mathcal{F}}(1)\geq \widehat{\alpha}(\mathcal{F})$, with equality if 
$\mathcal{Q}(C)$ is integral (Proposition~\ref{integral-qc}). If $I$
is a complete monomial ideal generated by monomials of degree $d$, 
then the limit
of $\alpha(\overline{I^n})/{n}$ when $n$ goes to infinity is $d$ (Corollary~\ref{jun12-21}).

Let $\mathcal C$ be a \textit{clutter} with vertex 
set $V(\mathcal{C})=\{t_1,\ldots,t_s\}$, that is, $\mathcal C$ is a 
family of subsets $E(\mathcal{C})$ of $V(\mathcal{C})$, called edges,
none of which is contained in
another. The {\it edge
ideal\/} of $\mathcal{C}$, denoted by $I(\mathcal{C})$,
is the ideal of $S$ generated by all  
monomials $t_e=\prod_{t_i\in e}t_i$ such 
that $e\in E(\mathcal{C})$. Any squarefree monomial ideal is the edge
ideal of a clutter. The support of a monomial $t^a$, denoted 
${\rm supp}(t^a)$, is the set of all $t_i$ that occur in $t^a$. The
\textit{ideal of covers} of $\mathcal{C}$, denoted
$I_c(\mathcal{C})$, is the ideal of $S$ generated by
all squarefree monomials whose support is a minimal vertex cover of
$\mathcal{C}$ \cite[p.~221]{monalg-rev}. In the context of Stanley--Reisner theory of simplicial
complexes, $I_c(\mathcal{C})$ is called the
Alexander dual of $I=I(\mathcal{C})$ and is denoted by
$I^\vee$ \cite[pp.~17--18]{Herzog-Hibi-book}.

If $I$ is a squarefree monomial ideal and
$\mathcal{F}=\{I^{(n)}\}_{n=0}^\infty$ is the filtration 
associated to $\mathcal{Q}(I^\vee)$, then $\widehat{\alpha}(\mathcal{F})$ is the
Waldschmidt constant of $I$ and is denoted by $\widehat{\alpha}(I)$
\cite{Cooper-symbolic,Francisco-TAMS}. As a consequence of
Theorem~\ref{schrijver-number-lp} we recover the fact that
$\widehat{\alpha}(I)$ is the value of the optimal solution of a linear
program \cite[Theorem~3.2]{Waldschmidt-Bocci-etal}. If $\mathcal{Q}(I)$ is integral, using 
\cite[Theorem~1.8]{cornu-book}, we recover the
formulas $\widehat{\alpha}(I)=\alpha(I)$ and
$\widehat{\alpha}(I^\vee)=\alpha(I^\vee)$
\cite[Theorem~4.3]{Seceleanu-packing} (Corollary~\ref{jun12-21-1}).

We classify 
the family of ideals that satisfy 
the equality
``$\overline{I_1^n}=I_n$ for all $n\geq 1$''
  when 
$\mathcal{F}=\{I_n\}_{n=0}^\infty$ is the filtration of 
a covering polyhedron (Proposition~\ref{equality-in}), and recover the
classification of Fulkersonian clutters given in \cite{clutters,Trung}
(Corollary~\ref{ntf-char}, cf. Remark~\ref{aug7-21}). Then we also classify 
the family of ideals that satisfy 
the equality
``$I_1^n=I_n$ for all $n\geq 1$'' 
(Proposition~\ref{equality-in}). 
If $\mathcal{F}$ comes from the covering polyhedron of the Alexander 
dual of the edge ideal $I=I(\mathcal{C})$ of a clutter $\mathcal{C}$,
this equality becomes 
``$I^n=I^{(n)}$ for all $n\geq 1$'' and, by
\cite[Corollary~3.14]{clutters} or \cite[Theorem 1.4]{hhtz}, the
equality holds if and only if $\mathcal{C}$ has the max-flow min-cut property
(Definition~\ref{mfmc-def}). The clutter $\mathcal{C}$ has the
max-flow min-cut property if and only if $\mathcal{Q}(I)$ is integral 
and $I$ is normal \cite[Theorem 3.4]{clutters}. Our
classification is a generalization of these facts (Corollary~\ref{ntf-char}).

The equality
between symbolic and ordinary powers of squarefree monomial ideals 
was related to a conjecture of Conforti and Cornu\'ejols
\cite[Conjecture~1.6]{cornu-book} 
on the max-flow min-cut property of clutters in 
\cite[Theorem~4.6, Conjecture~4.18]{reesclu} and \cite[Conjecture~3.10]{clutters}. 

We now turn our attention on the Rees algebra of a filtration 
$\mathcal{F}=\{I_n\}_{n=0}^\infty$ associated to a covering polyhedron
$\mathcal{Q}(C)$. 
The \textit{Rees algebra of the filtration}
$\mathcal{F}$, denoted $\mathcal{R}(\mathcal{F})$, is given by
$$
\mathcal{R}(\mathcal{F}):=S\textstyle\bigoplus I_1z\textstyle\bigoplus\cdots\textstyle\bigoplus
I_nz^n\textstyle\bigoplus\cdots\subset S[z],
$$
where $z$ is a new variable. To show some of the algebraic properties
of $\mathcal{R}(\mathcal{F})$ we need to extend the notion of a Simis
cone \cite{normali} to covering polyhedra.   
The \textit{Simis cone} of $\mathcal{Q}=\mathcal{Q}(C)$, denoted ${\rm SC}(\mathcal{Q})$, is the rational
polyhedral cone in $\mathbb{R}^{s+1}$ given by 
\begin{equation}\label{simis-cone-def}
{\rm SC}(\mathcal{Q})=\{x\in\mathbb{R}^{s+1}\vert\, x\geq 0;\, \langle x,(c_i,-1)\rangle\geq 0\,
\forall\,i\}, 
\end{equation}
where $c_i$ is the $i$-th column of $C$ and 
$\langle\ ,\, \rangle$ is the ordinary inner product in
$\mathbb{R}^{s+1}$. 

The Hilbert basis $\mathcal{H}$ of ${\rm SC}(\mathcal{Q})$ is
the set of all integral 
vectors $0\neq \alpha\in{\rm SC}(\mathcal{Q})$ such
that $\alpha$ is not the sum of two other non-zero integral vectors in
${\rm SC}(\mathcal{Q})$ \cite{Schr1}.
A polyhedron containing no lines is called  
{\it pointed}. Note that a covering polyhedron is always pointed. 
A face of dimension $1$ of a pointed polyhedral cone is
called an {\it extreme ray}.  

The vertices of $\mathcal{Q}$ are related to the extreme rays of ${\rm SC}(\mathcal{Q})$.
If $V(\mathcal{Q})=\{\beta_1,\ldots,\beta_r\}$ is the vertex set of
$\mathcal{Q}$, we show that the Simis cone is generated by the set   
$$\mathcal{B}'=\{e_1,\dots,e_s,
(\beta_1,1),\ldots,(\beta_r,1)\},
$$
where $e_i$ is the $i$-th unit vector in $\mathbb{R}^{s+1}$, 
and prove that $\mathcal{B}'$ is a set of representatives for the
extreme rays of ${\rm SC}(\mathcal{Q})$ (Proposition~\ref{scv}). 
The cone generated by $\mathcal{B}'$ is denoted by
$\mathbb{R}_+\mathcal{B}'$. In
general $\mathcal{B}'$ is not the Hilbert basis of  
${\rm SC}(\mathcal{Q})$ because the $\beta_i$'s might not be
integral.  

To compute the generators of the symbolic Rees algebra of a monomial
ideal $I$ one
can use the algorithm in the proof 
of \cite[Theorem 1.1]{cover-algebras} and 
\cite[Proposition~4]{cm-oriented-trees}. Another of our main results
shows that the Rees algebra of $\mathcal{F}$ is a 
normal $K$-algebra minimally generated by the monomials that
correspond to the points of the Hilbert basis 
$\mathcal{H}$ of ${\rm SC}(\mathcal{Q})$
(Theorem~\ref{hilbert-basis-filtration}). 

If $\mathcal{F}$ is the filtration of a covering polyhedron,
then there exists an integer $k\geq 1$ such that 
$\widehat{\alpha}(\mathcal{F})={\alpha_\mathcal{F}(nk)}/{nk}$ for all
$n\geq 1$ (Proposition~\ref{jun13-21}). This result was shown in
\cite[Corollary~6.2]{Cooper-symbolic}
when $\mathcal{F}$ is the filtration of symbolic powers of a monomial
ideal. 

The resurgence and asymptotic resurgence of ideals were introduced 
in \cite{resurgence,asymptotic-resurgence}. The resurgence of an ideal
relative to the integral closure filtration was introduced in
\cite{Francisco-TAMS}. We define similar notions for filtrations of
ideals. 
Let $\mathcal{F}=\{I_n\}_{n=0}^\infty$ be the filtration associated to a
covering polyhedron $\mathcal{Q}(C)$ and let
$\mathcal{F}'=\{J_n\}_{n=0}^\infty$ be another filtration of ideals of
$S$.   
We define the \textit{resurgence} and {\em asymptotic resurgence of the
filtration $\mathcal{F}$ 
relative to $\mathcal{F}'$} to be 
\begin{align*}
&\rho(\mathcal{F}, \mathcal{F}'):=\left.\sup \left\{{m}/{r}
\ \right|\, 
I_{m}\not\subset J_{r}\right\},\\ 
&\ \ \ \ \ \widehat{\rho}(\mathcal{F}, \mathcal{F}'):=\left.\sup
\left \{ {m}/{r}\ 
\right|\, 
I_{mt}\not\subset J_{rt} \text{ for all } t\gg 0\right\},\
\text{respectively}.
\end{align*}

The following are interesting special cases of the resurgence and asymptotic
resurgence of $\mathcal{F}$ relative to a filtration $\mathcal{F}'$:

\begin{enumerate} 
\item[] If $\mathcal{F}'=\{I_1^n\}_{n=0}^\infty$, we denote
$\rho(\mathcal{F}, \mathcal{F}')$ and $\widehat{\rho}(\mathcal{F},
\mathcal{F}')$ by $\rho(\mathcal{F})$ and
$\widehat{\rho}(\mathcal{F})$, respectively. We call $\rho(\mathcal{F})$ and
$\widehat{\rho}(\mathcal{F})$ the \textit{resurgence} and
\textit{asymptotic resurgence} 
of $\mathcal{F}$. 
\smallskip
\item[] If $\mathcal{F}'=\{\overline{I_1^n}\}_{n=0}^\infty$, we denote
$\rho(\mathcal{F}, \mathcal{F}')$ and $\widehat{\rho}(\mathcal{F},
\mathcal{F}')$ by $\rho_{ic}(\mathcal{F})$ and
$\widehat{\rho}_{ic}(\mathcal{F})$, respectively. We call $\rho_{ic}(\mathcal{F})$ and
$\widehat{\rho}_{ic}(\mathcal{F})$ the \textit{ic-resurgence} and
\textit{ic-asymptotic resurgence} 
of $\mathcal{F}$.
\end{enumerate} 

If $\mathcal{F}$ is a strict filtration, then 
$\widehat{\rho}(\mathcal{F})=\widehat{\rho}_{ic}(\mathcal{F})={\rho}_{ic}(\mathcal{F})$
and this number is finite 
(Section~\ref{section-comparison}, Lemma~\ref{bound-ic}). This result
was inspired by the study of asymptotic resurgence of ideals using
integral closures 
of Dipasquale, Francisco, Mermin and Schweig and specially by their result
that for any ideal $I$ of $S$ one has
$\widehat{\rho}(I)=\widehat{\rho}_{ic}(I)={\rho}_{ic}(I)$ 
\cite[Corollary~4.14]{Francisco-TAMS}.  

Let $I$ be a squarefree monomial ideal of $S$ and
let $\mathcal{F}=\{I^{(n)}\}_{n=0}^\infty$ be the filtration
associated to $\mathcal{Q}(I^\vee)$. 
The resurgence and ic-resurgence of $\mathcal{F}$ are denoted by
$\rho(I)$ and $\rho_{ic}(I)$, respectively \cite{resurgence,Francisco-TAMS}.  
In this
case it is known that the computation of $\rho_{ic}(I)$ can be
reduced to linear programming \cite[Section~2]{Francisco-TAMS} (cf.
\cite{DiPasquale-Drabkin}). The
main result of Section~\ref{section-computing} shows that the ic-resurgence
$\rho_{ic}(\mathcal{F})$ of a strict filtration
$\mathcal{F}$ of a covering polyhedron $\mathcal{Q}(C)$ can be computed using 
linear programming. 
We give an algorithm, implemented in \textit{Normaliz} \cite{normaliz2} and \textit{Macaulay}$2$
\cite{mac2}, 
to compute the asymptotic resurgence of a
squarefree monomial ideal (Procedure~\ref{bowtie-procedure},
Algorithm~\ref{AS-code}). 

To state our result we need some notation. Let $c_1,\ldots,c_m$ be
the columns of the matrix $C$, let $B$ be a matrix with 
entries in $\mathbb{Q}_+$ such that the Newton
polyhedron of $I_1$ is $\mathcal{Q}(B)$, 
let $\beta_1,\ldots,\beta_k$ be the columns of $B$, and let $n_i$ be a
positive integer such that $n_i\beta_i$ is integral for
all $i$. 

We come to another of our main results.

\noindent \textbf{Theorem~\ref{lp-resurgence-formula}.}\textit{
For each $1\leq j\leq k$, let $\rho_j$ be the optimal value of the
following linear program with variables $y_1,\ldots,y_{s+3}$. If $\mathcal{F}$ is strict, then 
$\rho_{ic}(\mathcal{F})=\max\{\rho_j\}_{j=1}^k$.
\begin{align}
&\text{maximize }\ \ g_j(y)=y_{s+1}&&&&\nonumber\\
\quad&\text{subject to }\ \langle(y_1,\ldots,y_s),c_i\rangle-y_{s+1}\geq
0,\ i=1,\ldots,m,\, y_{s+1}\geq y_{s+3} &&&&\nonumber \\
&\quad \quad\quad\quad\quad y_i\geq 0,\, i=1,\ldots,s,\, y_{s+3}\geq 0&&&&\nonumber \\
&\quad \quad\quad\quad\quad  n_jy_{s+2}-\langle(y_1,\ldots,y_s),n_j\beta_j\rangle
\geq y_{s+3},\, y_{s+2}=1.&&&&\nonumber 
\end{align}
}
\quad Let $G$ be a graph and let $I(G)$ be its edge ideal. From
\cite[Theorem~3.12, Corollary~4.14]{Francisco-TAMS} and 
\cite[Theorem~6.7(i)]{Waldschmidt-Bocci-etal}, $\rho_{ic}(I(G))$ is
${2(\omega(G)-1)}/{\omega(G)}$ if $G$ is perfect, where $\omega(G)$ is the
clique number of $G$. The next result shows a similar formula for the
ideal of covers $I_c(G)$ of $G$. 
If $G$ is perfect, then $\rho(I_c(G))$ is
equal to $\rho_{ic}(I_c(G))$ because $I_c(G)$ is normal 
\cite[Theorem~2.10]{perfect}.

\noindent \textbf{Theorem~\ref{jun14-21}.}\textit{ 
$\rho_{ic}(I_c(G))\geq{2(\omega(G)-1)}/{\omega(G)}$ 
with equality if $G$ is perfect. 
}

If $G$ is a graph, we show that  
$\rho_{ic}(I(G))\geq {2\alpha_0(H)}/{|V(H)|}
$ 
for any induced
subgraph $H$ of $G$, where $\alpha_0(H)$ is the covering number
of $H$ (Proposition~\ref{jun14-21-1}),
and if $G$ is non-bipartite, we show
that $\widehat{\alpha}(I(G))\leq\widehat{\alpha}(I_c(G))$
(Proposition~\ref{jun14-21-2}). 

Let $I$ be a monomial ideal of $S$. 
We will show that the covering polyhedron $\mathcal{Q}(I)$ of $I$ is related
to the irreducible decomposition of $I$ that we now introduce. Recall
that an ideal $L$ of $S$ is called {\it irreducible} if 
$L$ cannot be written as an intersection of two ideals of $S$ that
properly contain $L$. 
Given $b=(b_1,\ldots,b_s)$ in $\mathbb{N}^s\setminus\{0\}$, we set
$\mathfrak{q}_b:=(\{t_i^{b_i}\vert\, b_i\geq 1\})$ and
$b^{-1}:=\sum_{b_i\geq 1}b_i^{-1}e_i$. According to
\cite[Theorems~6.1.16 and 6.1.17]{monalg-rev}, the ideal $I$ has a
\textit{unique irreducible decomposition}:
\begin{equation}\label{jun4-21}
I=\mathfrak{q}_{1}\textstyle\bigcap\cdots\bigcap\mathfrak{q}_{m},
\end{equation}
where each $\mathfrak{q}_{i}$ is an irreducible monomial ideal 
of the form $\mathfrak{q}_i=\mathfrak{q}_{\alpha_i}$ for some
$\alpha_i\in\mathbb{N}^s\setminus\{0\}$, and $I\neq\textstyle\bigcap_{i\neq j}\mathfrak{q}_{i}$ for
$j=1,\ldots,m$. The ideals $\mathfrak{q}_{1},\ldots,\mathfrak{q}_{m}$
are 
the {\it irreducible components\/} of $I$. 

Let $B$ be the matrix with column vectors
$\alpha_1^{-1},\ldots,\alpha_m^{-1}$. The covering polyhedron
$\mathcal{Q}(B)$ of $B$ is called the \textit{irreducible polyhedron} of $I$
 and is denoted by ${\rm IP}(I)$ \cite{Seceleanu-convex-bodies}
 (Example~\ref{filtration3}). 

Since irreducible ideals are
primary, the irreducible decomposition of $I$ is a 
primary decomposition of $I$.  The irreducible decomposition of $I$
is irredundant, that is, $I\neq\textstyle\bigcap_{i\neq j}\mathfrak{q}_{i}$ for
$j=1,\ldots,m$ but it is not necessarily
a minimal primary decomposition, that is, $\mathfrak{q}_i$ and
$\mathfrak{q}_j$ 
could have the same radical for
some $i\neq j$. For edge ideals of
weighted oriented graphs and for squarefree monomial ideals, their irreducible
decompositions are minimal \cite{WOG,monalg-rev}.

The next result shows that under some conditions the irreducible
components of a monomial
ideal are related to the vertices of its covering polyhedron
(cf. Remark~\ref{jun26-21}).

\noindent \textbf{Theorem~\ref{irreducible-deco-qA}.}\textit{ Let $I$
be a monomial ideal of $S$,
let
$I=\bigcap_{i=1}^m\mathfrak{q}_i$ be its
irreducible decomposition, and let $\mathcal{Q}(I)$ be the covering 
polyhedron of $I$. The following hold.
\begin{enumerate}
\item[(a)] If $\mathfrak{q}_k=(t_1^{b_1},\ldots,t_r^{b_r})$,
$b_\ell\geq 1$ for all $\ell$, and
${\rm rad}(\mathfrak{q}_j)\not\subset{\rm rad}(\mathfrak{q}_k)$ for $j\neq
k$, then the vector $b^{-1}:=\sum_{i=1}^rb_i^{-1}e_i$ is a
vertex of $\mathcal{Q}(I)$.
\item[(b)] If $I$ has no embedded associated primes and 
${\rm rad}(\mathfrak{q}_j)\neq{\rm rad}(\mathfrak{q}_i)$ for $j\neq
i$, then there are $\alpha_1,\ldots,\alpha_m$ in
$\mathbb{N}^s\setminus\{0\}$ such
that $\mathfrak{q}_i=\mathfrak{q}_{\alpha_i}$ and $\alpha_i^{-1}$ is a
vertex of $\mathcal{Q}(I)$ for $i=1,\ldots,m$.
\end{enumerate}
}

Let $\mathfrak{q}$ be a primary monomial ideal of
$S$. We show that ${\rm NP}(\mathfrak{q})={\rm IP}(\mathfrak{q})$ if and only
if $\mathfrak{q}$ is irreducible (Proposition~\ref{jun15-21}). If
$\mathfrak{q}$ is irreducible, we classify when $\mathfrak{q}$ is
normal and prove that $\mathfrak{q}$ is normal if and only if $\mathfrak{q}$
is complete (Proposition~\ref{normal-irreducible-ideals}).  
In polynomial rings
in two variables any complete ideal is normal by a result of Zariski 
\cite[Appendix 5]{ZS}. 

For monomial ideals we classify when the Newton polyhedron is the irreducible polyhedron
using integral closure: 

\noindent \textbf{Theorem~\ref{NP-IP-char}.}\textit{ 
Let $I$ be a monomial ideal of $S$ and let 
$I=\mathfrak{q}_1\bigcap\cdots\bigcap\mathfrak{q}_m$ be the
irreducible decomposition of $I$. Then ${\rm NP}(I)={\rm IP}(I)$ if
and only if
$\overline{I^n}=\overline{\mathfrak{q}_1^n}\bigcap\cdots\bigcap\overline{\mathfrak{q}_m^n}$
for all $n\geq 1$.
}

For a certain interesting class of monomial ideals, 
we classify when the Newton polyhedron is the irreducible polyhedron
using integral closure and symbolic powers (Theorem~\ref{np=ip-char}).

In Section~\ref{examples-section} we present examples illustrating 
our results. Then in Appendix~\ref{Appendix} we give the procedures
for \textit{Normaliz} \cite{normaliz2}, \textit{PORTA} \cite{porta},   
and \textit{Macaulay}$2$
\cite{mac2}, that are used in the 
examples. 

For unexplained
terminology and additional information,  we refer to 
\cite{mc8,GoNi,huneke-swanson-book,SchenzelFiltrations,Vas1,bookthree} for the theory of Rees algebras,
filtrations and integral closure, \cite{Herzog-Hibi-book,monalg-rev} for the theory of
edge ideals and monomial ideals, and \cite{korte,Schr,Schr2} for combinatorial optimization and integer
programing.  

\section{Preliminaries}\label{section-prelim} 
In this section we introduce Rees algebras of three special
filtrations of ideals and
some results from commutative algebra and polyhedral geometry. To
avoid repetitions, throughout this paper we continue to employ 
the notations and definitions used in Section~\ref{section-intro}. 

\begin{proposition}\cite[Proposition~6.1.7]{monalg-rev}\label{primary-monomial} 
A monomial ideal
$\mathfrak{q}$ of $S$ is 
a primary ideal if and only if, up to permutation of the 
variables, it has the form:
\begin{equation*}
\mathfrak{q}=(t_1^{v_1},\ldots,t_r^{v_r},t^{v_{r+1}},\ldots,t^{v_q}),
\end{equation*}
where $v_i\geq 1$ for $i=1,\ldots,r$ and $\bigcup_{i=r+1}^q{\rm
supp}(t^{v_i})\subset
\{t_1,\ldots,t_r\}$. 
\end{proposition}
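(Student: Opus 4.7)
The plan is to prove both directions by working directly with monomials, leveraging the standard criterion that a monomial ideal $\mathfrak{q}$ of $S$ is primary if and only if, whenever $t^a t^b \in \mathfrak{q}$ with $t^a \notin \mathfrak{q}$, one has $t^b \in \sqrt{\mathfrak{q}}$; equivalently, some power of $t^b$ belongs to $\mathfrak{q}$. The argument is driven throughout by the elementary fact that a monomial $t^c$ lies in a monomial ideal if and only if it is divisible by one of the minimal generators.

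For the ``if'' direction, assume $\mathfrak{q}$ has the stated form. First I would compute the radical and observe that $\sqrt{\mathfrak{q}} = \mathfrak{p} := (t_1,\ldots,t_r)$: the containment $\mathfrak{p} \subset \sqrt{\mathfrak{q}}$ is clear from $t_i^{v_i}\in\mathfrak{q}$ for $i\leq r$, and the reverse containment follows because every monomial generator listed involves only the variables $t_1,\ldots,t_r$. To verify primariness, suppose $t^a t^b \in \mathfrak{q}$ with $t^b \notin \sqrt{\mathfrak{q}} = \mathfrak{p}$, which forces $b_i = 0$ for every $i \leq r$. Since $t^{a+b}\in\mathfrak{q}$, it is divisible by some minimal generator $t^{v_j}$; as the support of $v_j$ lies in $\{t_1,\ldots,t_r\}$ by hypothesis (including the pure powers $t_i^{v_i}$) and $t^b$ contributes nothing to those coordinates, the divisibility $v_j \leq a+b$ componentwise collapses to $v_j \leq a$. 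Hence $t^a \in \mathfrak{q}$, proving primariness.

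For the ``only if'' direction, assume $\mathfrak{q}$ is primary. Since $\sqrt{\mathfrak{q}}$ is a prime monomial ideal it is generated by variables, so up to a permutation of the $t_i$ I may write $\sqrt{\mathfrak{q}} = (t_1,\ldots,t_r)$. For each $i \leq r$, the minimality choice of $v_i$ with $t_i^{v_i} \in \mathfrak{q}$ produces a minimal generator $t_i^{v_i}$. The main obstacle, and the only step requiring care, is showing that every minimal generator of $\mathfrak{q}$ has support contained in $\{t_1,\ldots,t_r\}$. I would argue by contradiction: if $t^w$ is a minimal generator with $w_k \geq 1$ for some $k > r$, write $t^w = t_k \cdot t^{w-e_k}$; since $t_k \notin \sqrt{\mathfrak{q}}$, no power of $t_k$ lies in $\mathfrak{q}$, so primariness applied to the factorization $t_k \cdot t^{w-e_k}\in\mathfrak{q}$ forces $t^{w-e_k}\in\mathfrak{q}$, violating minimality of $t^w$. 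Collecting the pure powers $t_1^{v_1},\ldots,t_r^{v_r}$ together with the remaining minimal generators (all supported in $\{t_1,\ldots,t_r\}$) yields the stated form.
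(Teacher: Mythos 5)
Your proof is correct and follows the standard route. The paper does not reproduce an argument here, only citing \cite[Proposition~6.1.7]{monalg-rev}, but your two-direction proof --- identifying $\sqrt{\mathfrak{q}}=(t_1,\ldots,t_r)$ and collapsing the divisibility $v_j\leq a+b$ to $v_j\leq a$ for the ``if'' direction, and factoring out a variable $t_k\notin\sqrt{\mathfrak{q}}$ to contradict minimality of a generator for the ``only if'' direction --- is essentially the argument in that reference, with your appeal to the standard lemma that primariness of a monomial ideal may be tested on monomials being a legitimate and customary reduction.
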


\begin{definition}\label{symbolic-power-def}\rm 
Let $I$ be an ideal of $S$ and 
let $\mathfrak{p}_1,\ldots,{\mathfrak p}_r$ be  
the minimal primes of $I$. Given an integer $n\geq 1$, we define 
the $n$-th \textit{symbolic power} of 
$I$ to be the ideal 
$$
I^{(n)}:=\textstyle\bigcap_{i=1}^r
(I^nS_{\mathfrak{p}_i}\textstyle\bigcap
S).
$$
\end{definition}

\begin{lemma}{\rm(\cite[Lemma~2]{cm-oriented-trees},
\cite[Lemma~3.1]{cover-algebras})} 
\label{anoth-one-char-spow-general} 
Let $I$ be a monomial ideal of $S$. If 
$\mathfrak{I}_1,\ldots,\mathfrak{I}_r$ are the primary components
associated to the minimal primes 
of $I$, then 
$$
I^{(n)}={\mathfrak I}_1^{n}\textstyle\bigcap\cdots\bigcap {\mathfrak I}_r^{n}\ 
\mbox{ for all }\ n\geq 1.
$$
\end{lemma}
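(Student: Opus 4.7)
The plan is to unwind the definition of $I^{(n)}$ and prove the key local identity
$$
I^n S_{\mathfrak{p}_i}\cap S \;=\; \mathfrak{I}_i^{\,n} \qquad (i=1,\ldots,r);
$$
intersecting over $i$ then immediately gives the stated formula. Fix an irredundant primary decomposition $I=\mathfrak{I}_1\cap\cdots\cap\mathfrak{I}_r\cap\mathfrak{I}_{r+1}\cap\cdots\cap\mathfrak{I}_s$ where $\mathfrak{I}_1,\dots,\mathfrak{I}_r$ correspond to the minimal primes $\mathfrak{p}_1,\dots,\mathfrak{p}_r$ and $\mathfrak{I}_{r+1},\dots,\mathfrak{I}_s$ are the embedded components, with radicals $\mathfrak{P}_{r+1},\dots,\mathfrak{P}_s$.

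First I would show that $\mathfrak{I}_j S_{\mathfrak{p}_i}=S_{\mathfrak{p}_i}$ for every $j\neq i$. For $j\leq r$, $j\neq i$, this holds because $\mathfrak{p}_j$ and $\mathfrak{p}_i$ are distinct minimal primes, so $\mathfrak{p}_j\not\subseteq\mathfrak{p}_i$. For $j>r$, each $\mathfrak{P}_j$ strictly contains some minimal prime of $I$, hence cannot be contained in the minimal prime $\mathfrak{p}_i$ (otherwise $\mathfrak{P}_j$ would be a minimal prime itself). In both cases there is a variable $t_k\in\operatorname{rad}(\mathfrak{I}_j)\setminus\mathfrak{p}_i$; since $t_k^N\in\mathfrak{I}_j$ for some $N$ and $t_k$ is a unit in $S_{\mathfrak{p}_i}$, the ideal $\mathfrak{I}_j S_{\mathfrak{p}_i}$ is all of $S_{\mathfrak{p}_i}$. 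Taking the intersection of the localized decomposition then yields $IS_{\mathfrak{p}_i}=\mathfrak{I}_iS_{\mathfrak{p}_i}$, and raising to the $n$-th power gives $I^nS_{\mathfrak{p}_i}=\mathfrak{I}_i^{\,n}S_{\mathfrak{p}_i}$.

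Next I would check that $\mathfrak{I}_i^{\,n}$ is itself $\mathfrak{p}_i$-primary, so that the standard fact $Q S_{\mathfrak{p}}\cap S=Q$ for a $\mathfrak{p}$-primary ideal $Q$ applies. By Proposition~\ref{primary-monomial}, after relabeling, $\mathfrak{I}_i$ has the form $(t_1^{v_1},\ldots,t_\ell^{v_\ell},t^{v_{\ell+1}},\ldots,t^{v_q})$ with the supports of all generators contained in $\{t_1,\ldots,t_\ell\}$, where $\mathfrak{p}_i=(t_1,\ldots,t_\ell)$. Any generator of $\mathfrak{I}_i^{\,n}$ is a product of $n$ of these, so it is a monomial with support in $\{t_1,\ldots,t_\ell\}$, and the generator $t_j^{nv_j}$ of $\mathfrak{I}_i^{\,n}$ provides the required pure power of each $t_j$. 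Proposition~\ref{primary-monomial} then certifies that $\mathfrak{I}_i^{\,n}$ is primary with radical $\mathfrak{p}_i$. Combining this with the previous step gives $I^nS_{\mathfrak{p}_i}\cap S=\mathfrak{I}_i^{\,n}S_{\mathfrak{p}_i}\cap S=\mathfrak{I}_i^{\,n}$, which is the asserted claim; intersecting over $i$ produces the formula for $I^{(n)}$.

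The only subtle point, and the one I expect to be the main obstacle, is the verification that the power $\mathfrak{I}_i^{\,n}$ remains primary: powers of primary ideals are not primary in general, and the argument uses in an essential way the combinatorial structure of monomial primary ideals provided by Proposition~\ref{primary-monomial}. Everything else is a routine application of the interplay between localization and primary decomposition, together with the incomparability of minimal primes and the fact that embedded primes strictly dominate some minimal prime.
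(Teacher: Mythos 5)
Your proof is correct. The paper does not supply its own argument for this lemma (it simply cites \cite[Lemma~2]{cm-oriented-trees} and \cite[Lemma~3.1]{cover-algebras}), but your route is the standard one taken in those references: localize at each minimal prime $\mathfrak{p}_i$ to collapse $IS_{\mathfrak{p}_i}$ to $\mathfrak{I}_iS_{\mathfrak{p}_i}$, and then use the combinatorial characterization of monomial primary ideals (Proposition~\ref{primary-monomial}) to show that $\mathfrak{I}_i^n$ remains $\mathfrak{p}_i$-primary so that $\mathfrak{I}_i^nS_{\mathfrak{p}_i}\cap S=\mathfrak{I}_i^n$. You correctly flag the primariness of $\mathfrak{I}_i^n$ as the one step that uses the monomial hypothesis in an essential way, and your verification of it — pure powers $t_j^{nv_j}$ persist in the generating set and all product generators keep their support inside $\mathfrak{p}_i$ — is exactly right.
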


Let $I$ be a monomial ideal 
of $S$. The \textit{Rees
algebra} of $I$, denoted $\mathcal{R}(I)$, is the Rees algebra of
the filtration $\{I^n\}_{n=0}^\infty$ of powers of $I$ and 
the \textit{symbolic Rees algebra} of
$I$, denoted $\mathcal{R}_s(I)$, is the Rees algebra of the 
filtration $\{I^{(n)}\}_{n=0}^\infty$ of symbolic powers of $I$.
It is well known \cite[p.~168]{Vas1} that the integral 
closure $\overline{\mathcal{R}(I)}$ of $\mathcal{R}(I)$ is the Rees
algebra of the filtration $\{\overline{I^n}\}_{n=0}^\infty$ of
integral closure of powers of $I$. 
Thus, $\mathcal{R}(I)$ 
is normal if and only if $I$ is normal. 

\begin{lemma}\cite[p.~169]{Vas1}\label{icd}
If $I$ is a monomial ideal of $S$ and $n\in\mathbb{N}_+$, then 
\begin{equation*}
\overline{I^n}=(\{t^a\in S\mid (t^a)^{p}\in I^{pn}
\mbox{ for some }p\geq 1\}).
\end{equation*}
\end{lemma}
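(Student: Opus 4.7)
The plan is to prove the stated equality of ideals by establishing the two containments separately, relying on Equation~\eqref{jun21-21}, which already describes $\overline{I^n}$ as the monomial ideal generated by $\{t^a\mid a/n\in{\rm NP}(I)\}$.

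For the containment $\supseteq$, I would take a generator $t^a$ of the right-hand side, so that $(t^a)^p=t^{pa}\in I^{pn}=(I^n)^p$ for some $p\geq 1$. Then $t^a$ satisfies the monic relation $T^p-t^{pa}=0$, whose intermediate coefficients are zero and whose constant term lies in $(I^n)^p$. This is by definition an equation of integral dependence of $t^a$ over the ideal $I^n$, so $t^a\in\overline{I^n}$. Since $\overline{I^n}$ is an ideal, the entire ideal on the right-hand side is contained in $\overline{I^n}$.

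For the containment $\subseteq$, Eq.~\eqref{jun21-21} reduces the problem to showing that every monomial $t^a$ with $a/n\in{\rm NP}(I)$ satisfies $(t^a)^p\in I^{pn}$ for some positive integer $p$. Writing $a/n=c+\sum_{i=1}^q\lambda_i v_i$ with $c\in\mathbb{R}_+^s$, $\lambda_i\geq 0$, and $\sum_i\lambda_i=1$, I would first observe that because $a$ and each $v_i$ are integral, the system is rational, and so by rational feasibility the vector $c$ and the scalars $\lambda_i$ may be chosen rational. I would then pick $p\in\mathbb{N}_+$ so that $pn\lambda_i\in\mathbb{N}$ for every $i$, which yields $pa=pnc+\sum_{i=1}^q(pn\lambda_i)v_i$, where $pnc=pa-\sum_i(pn\lambda_i)v_i\in\mathbb{Z}^s\cap\mathbb{R}_+^s=\mathbb{N}^s$ and $\sum_i pn\lambda_i=pn$. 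This expresses $t^{pa}$ as the product of $pn$ generators of $I$ (namely $pn\lambda_i$ copies of $t^{v_i}$) times the monomial $t^{pnc}$, so $(t^a)^p=t^{pa}\in I^{pn}$.

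The forward containment is essentially formal from the definition of integral closure. The only mild obstacle is in the reverse direction: selecting a single positive integer $p$ that simultaneously clears the denominators of the convex-combination coefficients $\lambda_i$. This reduces to the rationality of the Newton polyhedron, which is automatic since the generators $v_1,\ldots,v_q$ of $I$ are integral.
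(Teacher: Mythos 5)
The paper does not give its own proof of Lemma~\ref{icd}; it is cited directly from \cite[p.~169]{Vas1}, so there is no internal argument to compare against, and the assessment below concerns the correctness of your proposal on its own terms.

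Your $\supseteq$ inclusion is correct and complete: $T^p - t^{pa}=0$ has vanishing intermediate coefficients and constant term $-t^{pa}\in(I^n)^p=I^{pn}$, so it is an equation of integral dependence of $t^a$ over $I^n$, and since $\overline{I^n}$ is an ideal, the ideal generated by such $t^a$ is contained in $\overline{I^n}$.

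The $\subseteq$ direction has a potential circularity that you should not gloss over. You derive it from Eq.~\eqref{jun21-21}, but Eq.~\eqref{jun21-21} is not a more primitive fact than Lemma~\ref{icd}. In standard treatments the inclusion $\overline{I^n}\subseteq(\{t^a\vert\, a/n\in{\rm NP}(I)\})$ is itself obtained by first showing that $t^a\in\overline{I^n}$ forces $(t^a)^p\in I^{pn}$ for some $p$ and then reading off a convex-combination expression for $a/n$; your convex-combination argument (together with the denominator-clearing step, which is essentially the content of Lemma~\ref{change-of-coefficients} and is correct) is precisely what proves the reverse inclusion of Eq.~\eqref{jun21-21} \emph{from} Lemma~\ref{icd}. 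Unless you can point to a proof of Eq.~\eqref{jun21-21} that is independent of Lemma~\ref{icd} (for instance via the description of the normalization of the toric Rees algebra), your argument runs in a circle. A short direct argument for $\subseteq$ avoids all of this. Since $I^n$ is a monomial ideal, $\overline{I^n}$ is again monomial, so it suffices to take a monomial $t^a\in\overline{I^n}$ together with an equation of integral dependence
\[
(t^a)^m + c_1(t^a)^{m-1}+\cdots+c_m=0,\qquad c_i\in(I^n)^i.
\]
Every monomial occurring in $c_i$ lies in the monomial ideal $(I^n)^i$. Expanding the left-hand side, the monomial $t^{ma}$ occurs with coefficient $1$ from the leading term, so for cancellation some $c_i(t^a)^{m-i}$ with $i\geq 1$ must also contribute $t^{ma}$; this forces some monomial $t^b$ of $c_i$ to satisfy $b+(m-i)a=ma$, i.e.\ $b=ia$. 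Hence $(t^a)^i=t^{ia}=t^b\in(I^n)^i=I^{ni}$, and $p=i$ works.
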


Given $a\in {\mathbb R}^s\setminus\{0\}$  and 
$c\in {\mathbb R}$, the \textit{affine hyperplane} $H_{(a,c)}$ 
and the \textit{positive closed halfspace} $H^+_{(a,c)}$ 
bounded by $H_{(a,c)}$ are defined as
\[
H_{(a,c)}:=\{x\in{\mathbb R}^s\vert\, \langle x,a\rangle=c\}\ \mbox{ and
}\ H^+_{(a,c)}:=\{x\in{\mathbb R}^s\vert\, 
\langle x,a\rangle\geq c\}.
\]
\quad If $c=0$, $H_a$ will denote $H_{(a,c)}$ and $H_a^+$ will denote
$H^+_{(a,c)}$. 
If $a$ and $c$ are rational, $H^+_{(a,c)}$ 
is called a \textit{rational closed halfspace}. A \textit{rational polyhedron}
is a subset of ${\mathbb R}^s$ which is the 
intersection of a finite number of rational closed halfspaces of 
$\mathbb{R}^s$.

Let $\Gamma$ be a subset of $\mathbb{R}^s$, the cone 
generated by $\Gamma$, 
denoted ${\mathbb R}_+\Gamma$, is the set of all 
linear combinations of $\Gamma$ with coefficients in $\mathbb{R}_+$.
A subset $\mathcal{Q}$ of $\mathbb{R}^s$ is a 
rational polyhedron if and only if
$\mathcal{Q}=\mathcal{P}+\mathbb{R}_+\Gamma$,  
where $\mathcal{P}$ is the convex hull ${\rm conv}(\mathcal{A})$ of a
finite set $\mathcal{A}$ of rational
points and $\mathbb{R}_+\Gamma$ is a cone generated by a finite
set $\Gamma$ of rational points
\cite[Corollary~7.1b]{Schr}. The
computer programs \textit{Normaliz}
\cite{normaliz2} and  \textit{PORTA} \cite{porta} will be used to switch between these 
two representations.

\begin{lemma}\cite[p.~114]{Schr}\label{blocking-type} Let $\mathcal{Q}$ be a
rational polyhedron. The following conditions are equivalent. 
\begin{enumerate}
\item[(a)] $\mathcal{Q}\subset\mathbb{R}_+^s$ and if $y\geq x$ with
$x\in\mathcal{Q}$, implies $y\in\mathcal{Q}$.
\item[(b)] $\mathcal{Q}=\mathbb{R}_+^s+{\rm
conv}(\alpha_1,\ldots,\alpha_r)$ for some $\alpha_1,\ldots,\alpha_r$
in $\mathbb{Q}_+^s$.
\item[(c)] $\mathcal{Q}=\{x\vert\, x\geq 0;\,xD\geq 1\}$ for some
rational matrix $D$ with entries in $\mathbb{Q}_+$.
\end{enumerate}
\end{lemma}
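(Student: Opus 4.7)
The plan is to establish the equivalence via (b)$\Rightarrow$(a), (c)$\Rightarrow$(a), (a)$\Rightarrow$(b), and (a)$\Rightarrow$(c). The two implications into (a) are elementary; the two out of (a) use the Minkowski--Weyl decomposition of rational polyhedra to extract, respectively, the generators of $\mathcal{Q}$ and a non-negative defining system of inequalities.

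For (b)$\Rightarrow$(a), writing $x=u+v$ with $u\in\mathbb{R}_+^s$ and $v\in{\rm conv}(\alpha_1,\ldots,\alpha_r)\subset\mathbb{R}_+^s$ shows $x\geq 0$, and any $y\geq x$ decomposes as $y=(y-x+u)+v\in\mathcal{Q}$. For (c)$\Rightarrow$(a), the inclusion $\mathcal{Q}\subset\mathbb{R}_+^s$ is explicit, and if $y\geq x\in\mathcal{Q}$, then $yD=xD+(y-x)D\geq 1$ because $y-x\geq 0$ together with $D\geq 0$ yields $(y-x)D\geq 0$.

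For (a)$\Rightarrow$(b), I would invoke Minkowski--Weyl to write $\mathcal{Q}={\rm conv}(V)+{\rm rec}(\mathcal{Q})$ for a finite rational set $V$. Upper-closedness yields $e_i\in{\rm rec}(\mathcal{Q})$ for each $i$, so $\mathbb{R}_+^s\subset{\rm rec}(\mathcal{Q})$; conversely, fixing $x_0\in\mathcal{Q}$, any $r\in{\rm rec}(\mathcal{Q})$ satisfies $x_0+tr\in\mathcal{Q}\subset\mathbb{R}_+^s$ for every $t\geq 0$, and dividing by $t$ and letting $t\to\infty$ forces $r\geq 0$. Hence ${\rm rec}(\mathcal{Q})=\mathbb{R}_+^s$, which is pointed, so $\mathcal{Q}$ has finitely many rational vertices $\alpha_1,\ldots,\alpha_r\in\mathbb{Q}_+^s$, and $V$ can be taken to be this vertex set. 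For (a)$\Rightarrow$(c), I would start from a rational inequality description $\mathcal{Q}=\{x:\langle a^{(j)},x\rangle\geq b_j,\ j=1,\ldots,k\}$ and show each $a^{(j)}\geq 0$: indeed, $x_0+te_i\in\mathcal{Q}$ for all $t\geq 0$ gives $\langle a^{(j)},x_0\rangle+ta^{(j)}_i\geq b_j$ for all $t\geq 0$, which forces $a^{(j)}_i\geq 0$. Inequalities with $b_j\leq 0$ are then redundant given $x\geq 0$, while those with $b_j>0$ rescale to $\langle a^{(j)}/b_j,x\rangle\geq 1$; collecting the rational, non-negative vectors $a^{(j)}/b_j$ as columns of $D$ yields (c).

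The main obstacle is the single observation driving both (a)$\Rightarrow$(b) and (a)$\Rightarrow$(c): upper-closedness of $\mathcal{Q}$ forces all recession directions — and equivalently the coefficient vector of every valid inequality — to lie in the non-negative orthant. Once this is in hand, the case analysis on the sign of the right-hand sides $b_j$, together with the fact that inequalities with $b_j\leq 0$ are absorbed by the constraints $x\geq 0$, makes (c) follow cleanly, and the recession cone identification gives (b).
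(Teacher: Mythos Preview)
The paper does not supply its own proof of this lemma; it is quoted with a bare citation to Schrijver's book and no argument is given in the paper. Your proof is correct and is essentially the standard argument one finds in the literature: the implications into (a) are immediate from non-negativity, and for the implications out of (a) you correctly identify that upper-closedness forces ${\rm rec}(\mathcal{Q})=\mathbb{R}_+^s$ (giving (b) via Minkowski--Weyl) and forces every valid inequality to have a non-negative normal vector (so that after discarding those with $b_j\leq 0$, which are absorbed by $x\geq 0$, and rescaling the rest, one obtains (c)). One small point you leave implicit is that $\mathcal{Q}\neq\emptyset$ is used when choosing $x_0$; the empty case is handled separately (a zero column in $D$ for (c), an empty index set for (b)).
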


\begin{proposition}\label{cpr}
Let $\mathcal{Q}(C)$ be a covering polyhedron and let
$\{\beta_1,\ldots,\beta_r\}$ be its vertex set. Then
$$
\mathcal{Q}(C)=\mathbb{R}_+^s+{\rm
conv}(\beta_1,\ldots,\beta_r).
$$
\end{proposition}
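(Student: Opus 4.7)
The plan is to apply the Minkowski--Weyl decomposition for pointed rational polyhedra together with an explicit identification of the recession cone of $\mathcal{Q}(C)$.

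First, I would observe that $\mathcal{Q}(C)$ is a pointed rational polyhedron. Indeed, $\mathcal{Q}(C)\subset \mathbb{R}_+^s$ by definition, so $\mathcal{Q}(C)$ contains no affine line. By the Minkowski--Weyl theorem (see \cite[Corollary~7.1b]{Schr} combined with the standard result that a pointed polyhedron equals the convex hull of its vertices plus its recession cone), one has
$$
\mathcal{Q}(C)=\mathrm{conv}(\beta_1,\ldots,\beta_r)+\mathrm{rec}(\mathcal{Q}(C)),
$$
where $\mathrm{rec}(\mathcal{Q}(C))$ denotes the recession (characteristic) cone of $\mathcal{Q}(C)$.

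Second, I would compute the recession cone directly. For a polyhedron of the form $\{x\mid x\geq 0,\, xC\geq 1\}$, the recession cone consists of those directions $d$ such that $x+\lambda d\in\mathcal{Q}(C)$ for every $x\in\mathcal{Q}(C)$ and every $\lambda\geq 0$; equivalently,
$$
\mathrm{rec}(\mathcal{Q}(C))=\{d\in\mathbb{R}^s\mid d\geq 0,\, dC\geq 0\}.
$$
Since every entry of $C$ lies in $\mathbb{Q}_+$, the condition $d\geq 0$ already forces $dC\geq 0$. Therefore $\mathrm{rec}(\mathcal{Q}(C))=\mathbb{R}_+^s$, and substituting into the decomposition above yields the claim.

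I expect no serious obstacle: the only delicate point is the appeal to the Minkowski--Weyl decomposition in the form ``pointed polyhedron $=$ convex hull of vertices $+$ recession cone'', which is standard \cite[Theorem~8.5]{Schr}. Alternatively, one may derive the proposition by invoking Lemma~\ref{blocking-type} to write $\mathcal{Q}(C)=\mathbb{R}_+^s+\mathrm{conv}(\alpha_1,\ldots,\alpha_m)$ for some rational points $\alpha_i$, and then argue that the vertex set of this sum is necessarily contained in $\{\alpha_1,\ldots,\alpha_m\}$, so after discarding redundant points one obtains the representation with the $\beta_j$'s. Either route is short once the recession-cone computation is in hand.
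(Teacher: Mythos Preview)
Your proposal is correct and follows essentially the same route as the paper: both use that $\mathcal{Q}(C)$ is pointed, invoke the finite-basis/Minkowski--Weyl decomposition to write $\mathcal{Q}(C)$ as the convex hull of its vertices plus its characteristic (recession) cone, and then identify that cone with $\mathbb{R}_+^s$. Your explicit computation of the recession cone using the non-negativity of the entries of $C$ is in fact more detailed than the paper's, which simply asserts that this identification ``is not hard to see.''
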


\begin{proof} As $\mathcal{Q}(C)$ contains no lines, by
the finite basis theorem (see \cite[Theorem~4.1.3]{webster} and its
proof), there are $\gamma_1,\ldots,\gamma_p\in\mathbb{Q}_+^s$ such that 
$$
\mathcal{Q}(C)=\mathbb{R}_+
\{\gamma_1,\ldots,\gamma_p\}+{\rm conv}(\beta_1,\ldots,\beta_r),
$$
where $\beta_1,\ldots,\beta_r$ are the vertices of $\mathcal{Q}(C)$.
According to \cite[p. 100, Eq.~(5)(iv)]{Schr}, the 
polyhedral cone $\mathbb{R}_+\{\gamma_1,\ldots,\gamma_p\}$ is equal to
 ${\rm char.cone}(\mathcal{Q}(C))$, the characteristic cone of
$\mathcal{Q}(C)$. Then, it is 
not hard to see that ${\rm char.cone}(\mathcal{Q}(C))$ is equal to $\mathbb{R}_+^s$.
\end{proof}

\begin{proposition}\label{np-qa} Let
$I$ be a monomial ideal of $S$, let $u_1,\ldots,u_r$ be 
the vertices of $\mathcal{Q}(I)$, and let $B$ be the matrix with
column vectors $u_1,\ldots,u_r$. The following hold.
\begin{enumerate}
\item[(a)] \cite[Proposition~3.5(a)]{reesclu}
$\overline{I^n}=(\{t^a\vert\, 
a/n\in{\rm NP}(I)\})$. 
\item[(b)] \cite[Proposition~3.5(b)]{reesclu} ${\rm NP}(I)=\mathcal{Q}(B)=\{x\vert\, x\geq 0;\,
xB\geq 1\}$. 
\item[(c)] If $I=(t^{v_1},\ldots,t^{v_q})$, then the vertices of ${\rm NP}(I)$ are
contained in $\{v_1,\ldots,v_q\}$. 
\end{enumerate}
\end{proposition}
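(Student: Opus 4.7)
Parts (a) and (b) are explicit citations to \cite[Proposition~3.5]{reesclu}, so the plan is simply to invoke that reference. The substantive content of the proposition is (c), which asserts that the vertices of the Newton polyhedron ${\rm NP}(I)$ are among the exponent vectors $v_1,\ldots,v_q$ of the generators of $I$. My approach is via extreme-point arguments applied to the representation ${\rm NP}(I)=\mathbb{R}_+^s+{\rm conv}(v_1,\ldots,v_q)$ from Eq.~\eqref{NP-def}.

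The key step is to show that any vertex $v$ of ${\rm NP}(I)$ must lie in the bounded part ${\rm conv}(v_1,\ldots,v_q)$. I would write $v = x + y$ with $x \in {\rm conv}(v_1,\ldots,v_q)$ and $y\in\mathbb{R}_+^s$. If $y \neq 0$, then both $x$ and $x+2y$ lie in ${\rm NP}(I)$ and are distinct, and the equality $v = \tfrac{1}{2}x + \tfrac{1}{2}(x+2y)$ contradicts the fact that a vertex is an extreme point of the polyhedron. Hence $y=0$ and $v \in {\rm conv}(v_1,\ldots,v_q)$. Since $v$ is an extreme point of ${\rm NP}(I)$ that happens to lie in the convex subset ${\rm conv}(v_1,\ldots,v_q)$, it is automatically extreme in that subset, and therefore $v = v_j$ for some $j$. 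The only subtlety is invoking the standard equivalence between the polyhedral notion of vertex and that of extreme point for an unbounded polyhedron (the recession cone $\mathbb{R}_+^s$ is pointed, so ${\rm NP}(I)$ has vertices and the equivalence applies), which poses no real obstacle.
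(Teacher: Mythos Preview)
Your argument for (c) is correct and essentially the same as the paper's approach: the paper invokes \cite[Propositions~1.1.36 and 1.1.39]{monalg-rev} applied to the decomposition ${\rm NP}(I)=\mathbb{R}_+^s+{\rm conv}(v_1,\ldots,v_q)$, and your extreme-point argument is precisely an unpacking of what those cited propositions assert. The paper offers no proof for (a) and (b) beyond the citation, just as you propose.
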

\begin{proof} (c): Since ${\rm NP}(I)=\mathbb{R}_+^s+{\rm
conv}(v_1,\ldots,v_q)$, by \cite[Propositions 1.1.36 and
1.1.39]{monalg-rev}, the vertices of ${\rm NP}(I)$ are contained 
in the set $\{v_1,\ldots,v_q\}$. 
\end{proof}

\begin{lemma}\label{change-of-coefficients}
Let $\mathcal{B}=\{\beta_1,\ldots,\beta_r\}$ be a set of non-zero rational vectors 
in $\mathbb{R}_+^s$. Then 
\begin{align*}
&(\mathbb{R}_+^s+{\rm conv}(\mathcal{B}))\textstyle\bigcap\mathbb{Q}_+^s=
\mathbb{Q}_+^s+{\rm conv}_\mathbb{Q}(\mathcal{B}).
\end{align*}
\end{lemma}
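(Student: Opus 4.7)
The plan is to prove both inclusions separately, with one being immediate and the other reducing to a classical fact about rational polyhedra.

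The inclusion $\mathbb{Q}_+^s+{\rm conv}_\mathbb{Q}(\mathcal{B}) \subseteq (\mathbb{R}_+^s+{\rm conv}(\mathcal{B}))\cap\mathbb{Q}_+^s$ is immediate: any element of the left-hand side is a sum of rational non-negative vectors, hence it lies in $\mathbb{Q}_+^s$, and it trivially lies in $\mathbb{R}_+^s+{\rm conv}(\mathcal{B})$ since $\mathbb{Q}_+^s\subseteq\mathbb{R}_+^s$ and ${\rm conv}_\mathbb{Q}(\mathcal{B})\subseteq{\rm conv}(\mathcal{B})$.

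For the reverse inclusion, I would take $x\in(\mathbb{R}_+^s+{\rm conv}(\mathcal{B}))\cap\mathbb{Q}_+^s$ and write $x=a+\sum_{i=1}^r\lambda_i\beta_i$ with $a\in\mathbb{R}_+^s$, $\lambda_i\geq 0$, and $\sum_i\lambda_i=1$. Consider the auxiliary set
$$
P_x:=\Bigl\{\lambda\in\mathbb{R}^r\, :\, \lambda\geq 0,\ \textstyle\sum_{i=1}^r\lambda_i=1,\ x-\textstyle\sum_{i=1}^r\lambda_i\beta_i\geq 0\Bigr\}.
$$
Since $x$ and each $\beta_i$ are rational, $P_x$ is a \emph{rational} polytope, bounded by the simplex constraints $\lambda\geq 0$ and $\sum_i\lambda_i=1$, and $P_x$ is non-empty because the original real decomposition $(\lambda_1,\ldots,\lambda_r)$ is a witness.

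The key step is then to invoke the classical fact that every non-empty rational polytope admits a rational vertex: any vertex is the unique solution of a subsystem of the defining inequalities turned into equalities, and Cramer's rule shows that such a solution is rational because the coefficients and right-hand side are rational (see, e.g., \cite[Section~16.5]{Schr}). Taking such a rational vertex $\mu\in P_x\cap\mathbb{Q}^r$, the vector $a':=x-\sum_i\mu_i\beta_i$ is non-negative and rational, so the equality $x=a'+\sum_i\mu_i\beta_i$ exhibits $x$ as an element of $\mathbb{Q}_+^s+{\rm conv}_\mathbb{Q}(\mathcal{B})$. The main obstacle is precisely the rational-vertex step; the cleanest route is to quote it from the theory of rational polyhedra rather than prove it from scratch, thereby keeping the argument short and self-contained modulo a standard citation.
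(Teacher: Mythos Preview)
Your argument is correct. The easy inclusion is handled exactly as in the paper, and for the hard inclusion your feasibility-polytope $P_x$ is indeed a nonempty bounded rational polyhedron, so it has a rational vertex, which yields the desired rational decomposition of $x$.

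The paper's proof takes a different route: it lifts to $\mathbb{R}^{s+1}$ by observing that $(x,1)$ lies in the rational cone $\mathbb{R}_+\Gamma$ with $\Gamma=\{e_1,\ldots,e_s\}\cup\{(\beta_i,1)\}_{i=1}^r$, and then invokes Farkas's lemma (in the form: a rational point in $\mathbb{R}_+\Gamma$ already lies in $\mathbb{Q}_+\Gamma$) to get rational coefficients directly. Unwinding the last coordinate then recovers both the convex constraint $\sum\mu_i=1$ and the nonnegative remainder in $\mathbb{Q}_+^s$. Your approach stays in $\mathbb{R}^s$ and parametrizes the coefficient vector instead; it is arguably more hands-on, while the paper's lifting trick is a bit slicker in that the affine constraint is absorbed automatically. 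Both arguments ultimately rest on the same principle---a rational linear system that is feasible over $\mathbb{R}$ is feasible over $\mathbb{Q}$---so the difference is one of packaging rather than substance.
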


\begin{proof} The inclusion ``$\supset$'' is clear. To show the 
inclusion ``$\subset$'' take $x$ in the intersection of $\mathbb{Q}_+^s$ 
and $\mathbb{R}_+^s+{\rm conv}(\mathcal{B})$. Consider the set 
$\Gamma=\{e_i\}_{i=1}^s\textstyle\bigcup\{(\beta_i,1)\}_{i=1}^r$ of rational
vectors in $\mathbb{R}^{s+1}$. Note that $(x,1)$ is in
$\mathbb{R}_+\Gamma$. 
Then, using Farkas's lemma
 \cite[Theorem~1.1.25]{monalg-rev}, we obtain that $(x,1)$ is in 
 $\mathbb{Q}_+\Gamma$, the cone generated by $\Gamma$ over $\mathbb{Q}$. It
 follows that $x$ is in $\mathbb{Q}_+^s+{\rm
 conv}_\mathbb{Q}(\mathcal{B})$.
\end{proof}

\section{Rees algebras of filtrations of covering
polyhedra}\label{section-rees}
In this section we study filtrations of covering polyhedra and their
Waldschmidt constants and Rees algebras. For use below $\langle\
,\, \rangle$ denotes the standard inner
product on Euclidean space. 

\begin{lemma}\label{filtration} 
Let $\mathcal{Q}(C)$ be the covering polyhedron of a matrix
$C=(c_{i,j})$ and let $\mathcal{F}=\{I_n\}_{n=0}^\infty$ be the
sequence of ideals associated to $\mathcal{Q}(C)$ defined in
Eq.~\eqref{sequence-def}.  
The following hold.
\begin{enumerate}
\item[(a)] $\overline{I_n}=I_n$ for all $n\geq 1$, and $\mathcal{F}$ is a filtration of $S$, that is,
$I_{k}I_n\subset I_{k+n}$ and $I_{n+1}\subset I_n$ for all $k$ and
$n$ in $\mathbb{N}$. In particular
$I_1^n\subset\overline{I_1^n}\subset I_n$ for all $n\geq 1$.
\item[(b)] If $c_{i,j}\leq 1$ for all $i,j$ or $\mathcal{Q}(C)$ has at
least one integral vertex, then $\mathcal{F}$ is a strict filtration, that is,
$I_{n+1}\subsetneq I_n$ for all $n\geq 0$.
\end{enumerate}
\end{lemma}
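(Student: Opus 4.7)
The plan for (a) is to start with the explicit numerical description: for $a\in\mathbb{N}^s$, one has $t^a\in I_n$ if and only if $aC\geq n\cdot 1$, which is immediate from the defining condition $a/n\in\mathcal{Q}(C)$. From this, the filtration properties are routine. The inclusion $I_{n+1}\subset I_n$ follows from $(n+1)\cdot 1\geq n\cdot 1$, and $I_kI_n\subset I_{k+n}$ follows from the additivity $(a+b)C=aC+bC$. For the integral-closure claim, I would invoke the characterization $\overline{J}=(\{t^a\mid a\in {\rm NP}(J)\})$ (the $n=1$ case of Proposition~\ref{np-qa}(a)) and show that ${\rm NP}(I_n)$ is contained in the polyhedron $n\mathcal{Q}(C)=\{x\geq 0\mid xC\geq n\cdot 1\}$: every generator exponent of $I_n$ lies in $n\mathcal{Q}(C)$, and $n\mathcal{Q}(C)$ is convex and stable under addition by $\mathbb{R}_+^s$, so it must contain ${\rm NP}(I_n)$. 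Hence every lattice point $a\in{\rm NP}(I_n)$ satisfies $aC\geq n\cdot 1$, i.e., $t^a\in I_n$. The chain $I_1^n\subset \overline{I_1^n}\subset I_n$ then falls out, using $I_1^n\subset I_n=\overline{I_n}$.

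For (b), the strategy is to exhibit, for each $n\geq 0$, a specific monomial in $I_n\setminus I_{n+1}$. The case $n=0$ is handled uniformly: $1\in S=I_0$ but $0\notin\mathcal{Q}(C)$ (since $0\cdot C=0\not\geq 1$), so $1\notin I_1$. For $n\geq 1$, I would handle the two hypotheses separately. If $\mathcal{Q}(C)$ has an integral vertex $\beta$, then $\beta\in\mathbb{N}^s\setminus\{0\}$ (as $0\notin\mathcal{Q}(C)$), and since $\beta$ is a vertex, $s$ linearly independent constraints must be tight there; not all of these can be of the form $x_i=0$ (that would force $\beta=0$), so at least one column constraint is tight: $\beta c_j=1$ for some $j$. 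Then $a=n\beta$ satisfies $aC\geq n\cdot 1$ while $(aC)_j=n<n+1$, witnessing $t^a\in I_n\setminus I_{n+1}$. If instead $c_{i,j}\leq 1$ for all $i,j$, I would argue that \emph{any} minimal monomial generator $t^a$ of $I_n$ already lies in $I_n\setminus I_{n+1}$: minimality means $t^{a-e_i}\notin I_n$ whenever $a_i\geq 1$, so some column $j$ satisfies $(aC)_j-c_{i,j}<n$, hence $(aC)_j<n+c_{i,j}\leq n+1$. Existence of a minimal generator is guaranteed because $\mathcal{Q}(C)$ is non-empty (large enough multiples of $1_s=(1,\ldots,1)$ lie in it, since each column of $C$ has a positive entry) and rounding any rational point of $n\mathcal{Q}(C)$ coordinatewise up to integers stays inside $n\mathcal{Q}(C)$.

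The main obstacle I anticipate is the argument in the $c_{i,j}\leq 1$ case: one must convert the per-coordinate slack created by minimality into a single column sum strictly below $n+1$, which is exactly where the hypothesis $c_{i,j}\leq 1$ is used. The integral-vertex case reduces to the standard polyhedral observation that no vertex of $\mathcal{Q}(C)$ is the origin, after which the rest is linear-inequality bookkeeping; likewise the integrally-closed claim in (a) is entirely driven by the fact that the defining inequalities $xC\geq n\cdot 1$ already cut out a convex, upward-closed region, so no new lattice points can appear in the Newton polyhedron beyond those already in $I_n$.
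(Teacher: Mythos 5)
Your proof is correct and follows the same general plan as the paper's, but with a few genuine variations in the details that are worth noting. For the integral-closure claim in (a), the paper uses Lemma~\ref{icd} to write $pa=\epsilon+\sum\lambda_iw_i$ with $t^{w_i}\in I_n$ and then verifies $a/n\in\mathcal{Q}(C)$ by hand from convexity; you instead invoke the Newton-polyhedron characterization of $\overline{I_n}$ and observe that ${\rm NP}(I_n)\subset n\mathcal{Q}(C)$ because the latter is convex, upward-closed, and contains every generator exponent of $I_n$. Both arguments hinge on the same convexity fact, but your version packages it as a polyhedral containment and is arguably cleaner. For (b) in the integral-vertex case, the paper proceeds by contradiction and cites \cite[Corollary~1.1.47]{monalg-rev} for the tight constraint $\langle\beta,c_j\rangle=1$; you derive that fact directly from the definition of a vertex (some $s$ linearly independent active constraints, not all of them non-negativity constraints since $\beta\neq 0$) and then exhibit $t^{n\beta}$ as an explicit witness in $I_n\setminus I_{n+1}$, which is slightly more self-contained. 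For (b) in the $c_{i,j}\le 1$ case, you take a minimal generator of $I_n$ and show it fails membership in $I_{n+1}$, while the paper takes a minimal generator of $I_{n+1}$ and produces $t^{a-e_k}\in I_n\setminus I_{n+1}$; these are symmetric formulations of the same idea. You also explicitly check nonemptiness of $I_n$ and handle $n=0$ separately, points the paper leaves implicit. All steps check out.
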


\begin{proof} (a): Let $c_1,\ldots,c_m$ be the column vectors of the
matrix $C$. First we show the equality $\overline{I_n}=I_n$ for all $n\geq 1$.
Clearly $I_n\subset\overline{I_n}$. To show the other inclusion take
$t^a\in\overline{I_n}$. Then, by Lemma~\ref{icd}, there is $p\in\mathbb{N}_+$ such that
$(t^a)^p\in (I_n)^{p}$. Hence 
$$pa=\epsilon+\lambda_1w_1+\cdots+\lambda_rw_r,$$
where $\epsilon\in\mathbb{N}^s$, $t^{w_i}\in I_n$ 
and $\lambda_i\in\mathbb{N}$ for all $i$, and
$\sum_{i=1}^r\lambda_i=p$. Therefore
$$
\frac{a}{n}=\frac{\epsilon}{np}+\left(\frac{\lambda_1}{p}\right)\left(\frac{w_1}{n}\right)
+\cdots+
\left(\frac{\lambda_r}{p}\right)\left(\frac{w_r}{n}\right),
$$
where $w_i/n\in\mathcal{Q}(C)$ for all $i$. Hence, since
$(a/n)-(\epsilon/{np})$ is a convex combination of $w_1/n,\ldots,w_r/n$, we get 
that ${a}/{n}\in\mathcal{Q}(C)$, that is, $t^a\in I_n$. Next we show
the inclusion $I_{k}I_n\subset I_{k+n}$. Take $t^a\in I_k$ and
$t^b\in I_n$, that is, $\langle 
a/k,c_i\rangle\geq 1$ and $\langle b/n,c_i\rangle\geq 1$ for all $i$.
Then
$$
\left\langle \frac{a+b}{k+n},c_i\right\rangle=\frac{\left\langle
a+b,c_i\right\rangle}{k+n}=\frac{\left\langle
a,c_i\right\rangle+\left\langle
b,c_i\right\rangle}{k+n}\geq\frac{k+n}{k+n}=1
$$
for all $i$. Thus, $t^at^b\in I_{k+n}$. Finally we show the inclusion
$I_{n+1}\subset I_n$. We may assume $n\geq 1$. Take $t^a\in I_{n+1}$, that is, $\langle
a/(n+1),c_i\rangle\geq 1$ for all $i$. Then 
$$
\left\langle{a}/{n},c_i\right\rangle={\left\langle
a,c_i\right\rangle}/{n}\geq{\left\langle
a,c_i\right\rangle}/{(n+1)}\geq{(n+1)}/{(n+1)}=1
$$
for all $i$. Thus, $t^a\in I_n$. 

(b): Assume that $c_{i,j}\leq 1$ for all $i,j$. Pick a minimal generator $t^a$,
$a=(a_1,\ldots,a_s)$, of the monomial ideal $I_{n+1}$. Then,
$\langle{a}/(n+1),c_i\rangle\geq 1$ for $i=1,\ldots,m$ and
$a_i\in\mathbb{N}$ for $i=1,\ldots,s$. There is $k$ such that 
$a_k\geq 1$. Then
$$
\langle{a}-e_k,c_i\rangle=\langle{a},c_i\rangle-\langle
e_k,c_i\rangle\geq n+1-c_{k,i}\geq n,
$$
for $i=1,\ldots,m$, and $t^{a-e_k}$ is in $I_n$.
Note that $t^{a-e_k}$ cannot be in $I_{n+1}$ because
$t^a=t_kt^{a-e_k}$ and $t^a\in G(I_{n+1})$. Thus, $I_{n+1}\subsetneq I_n$. Now assume that $a$ is
an integral vertex of $\mathcal{Q}(C)$. Then
$\langle{a},c_i\rangle=1$ for some $i$
\cite[Corollary~1.1.47]{monalg-rev}. Assume that $I_n\subset
I_{n+1}$. As $(na)/n\in\mathcal{Q}(C)$, one has $t^{na}\in
I_n$, and consequently $t^{na}\in
I_{n+1}$, that is, $na/(n+1)\in\mathcal{Q}(C)$. Hence
$$
{n}/{(n+1)}=({n}/({n+1}))\langle{a},c_i\rangle=
\left\langle{na}/{(n+1)},c_i\right\rangle\geq 1,
$$
a contradiction. Thus, $I_n\not\subset I_{n+1}$ and
$I_{n+1}\subsetneq I_n$. 
\end{proof}

\begin{lemma}\cite[Lemma~A.4.1]{Scheinerman}\label{subadditivity} 
If $g\colon\mathbb{N}_+\rightarrow\mathbb{R}$ is a subadditive
function, that is, for all $n_1$, $n_2$, we have
$g(n_1 + n_2)\leq g(n_1) + g(n_2)$ and $g(n)\geq 0$ for all $n$, then
$\lim_{n\rightarrow\infty}{g(n)}/{n}$
exists and is equal to the infimum of $g(n)/n $ $(n\in\mathbb{N}_+)$.
\end{lemma}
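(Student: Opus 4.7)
The plan is to carry out Fekete's classical subadditivity argument. Set $L:=\inf_{n\in\mathbb{N}_+} g(n)/n$. Since $g(n)\geq 0$ for every $n$, the infimum $L$ is a well-defined non-negative real number. By definition of infimum, $\liminf_{n\to\infty} g(n)/n \geq L$, so it suffices to prove $\limsup_{n\to\infty} g(n)/n \leq L$.

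Fix $\varepsilon>0$ and choose $m\in\mathbb{N}_+$ with $g(m)/m < L+\varepsilon$. For each integer $n>m$, apply the division algorithm to write $n=qm+r$ with $q\geq 1$ and $0\leq r<m$. Iterating the subadditivity hypothesis $g(n_1+n_2)\leq g(n_1)+g(n_2)$ yields $g(qm)\leq q\,g(m)$, and one further application gives
\[
g(n)\ \leq\ q\,g(m)+g(r)\quad\text{if}\ r\geq 1,\qquad g(n)\ \leq\ q\,g(m)\quad\text{if}\ r=0.
\]
Let $M:=\max\{g(1),g(2),\ldots,g(m-1)\}\cup\{0\}$, a finite constant depending only on $m$ and $g$. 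Then in either case $g(n)\leq q\,g(m)+M$, hence
\[
\frac{g(n)}{n}\ \leq\ \frac{qm}{n}\cdot\frac{g(m)}{m}+\frac{M}{n}.
\]

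Now take $n\to\infty$ with $m$ (and therefore $M$) held fixed: since $r<m$, one has $qm/n=1-r/n\to 1$, while $M/n\to 0$. Therefore
\[
\limsup_{n\to\infty}\frac{g(n)}{n}\ \leq\ \frac{g(m)}{m}\ <\ L+\varepsilon.
\]
Because $\varepsilon>0$ was arbitrary, $\limsup_{n\to\infty} g(n)/n\leq L$, combined with the earlier $\liminf\geq L$ we conclude that the limit exists and equals $L=\inf_{n\in\mathbb{N}_+} g(n)/n$.

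The only genuine obstacle is the minor bookkeeping point that $g$ is only defined on $\mathbb{N}_+$, so the remainder $r=0$ case must be handled separately from $r\geq 1$; absorbing both cases into the single bound $g(n)\leq q\,g(m)+M$ by enlarging $M$ to include $0$ sidesteps this without any loss. Every other step is a direct application of subadditivity, the division algorithm, and the standard $\limsup$/$\liminf$ squeeze.
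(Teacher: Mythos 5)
Your proof is correct: it is exactly Fekete's subadditivity lemma argument (division algorithm to write $n=qm+r$, iterate subadditivity to get $g(n)\leq qg(m)+M$, then let $n\to\infty$ with $m$ fixed). The paper does not supply its own proof of this lemma — it simply cites \cite[Lemma~A.4.1]{Scheinerman} — so there is no in-paper argument to compare against; your reasoning, including the careful handling of the $r=0$ case by enlarging $M$ to include $0$, is the standard proof and is complete. (One cosmetic point: writing $M:=\max\{g(1),\ldots,g(m-1)\}\cup\{0\}$ confuses a number with a set; you mean $M:=\max(\{g(1),\ldots,g(m-1)\}\cup\{0\})$, which is what your argument actually uses.)
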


\begin{lemma}\label{subadditive-g} Let $\mathcal{F}=\{I_n\}_{n=0}^\infty$ be the filtration
of a covering polyhedron and let $\alpha_\mathcal{F}$ be the function
in Eq.~\eqref{af-eq}. Then, $\lim_{n\rightarrow\infty}{\alpha_{\mathcal{F}}(n)}/{n}$ exists and is
the  infimum of $\alpha_\mathcal{F}(n)/n $ $(n\in\mathbb{N}_+)$.
\end{lemma}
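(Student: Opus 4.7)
The plan is to reduce the statement to Fekete's lemma (Lemma~\ref{subadditivity}), which is already recorded in the excerpt. So it suffices to verify that the function $\alpha_\mathcal{F}$ takes non-negative values and is subadditive on $\mathbb{N}_+$.

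First I would check non-negativity. Since every column $c_i$ of $C$ is a non-zero vector with entries in $\mathbb{Q}_+$, one has $\langle 0,c_i\rangle=0<1$, so $0\notin\mathcal{Q}(C)$. Hence $1\notin I_n$ for any $n\geq 1$, so $I_n$ is a proper non-zero monomial ideal (it is non-zero because for any sufficiently large integer vector $a$ one has $a/n\in\mathcal{Q}(C)$ by Lemma~\ref{blocking-type}(a), since $\mathcal{Q}(C)\subset\mathbb{R}_+^s$ is closed under going up). Thus $\alpha_\mathcal{F}(n)=\alpha(I_n)\geq 1>0$ for all $n\geq 1$.

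Next, the subadditivity. Fix $n_1,n_2\in\mathbb{N}_+$ and pick minimal generators $t^a\in G(I_{n_1})$ and $t^b\in G(I_{n_2})$ realizing the initial degrees, that is, $\deg(t^a)=\alpha_\mathcal{F}(n_1)$ and $\deg(t^b)=\alpha_\mathcal{F}(n_2)$. By Lemma~\ref{filtration}(a) the sequence $\mathcal{F}$ is a filtration, so $I_{n_1}I_{n_2}\subset I_{n_1+n_2}$. Hence $t^{a+b}=t^at^b\in I_{n_1+n_2}$, which yields
\[
\alpha_\mathcal{F}(n_1+n_2)\leq \deg(t^{a+b})=\deg(t^a)+\deg(t^b)=\alpha_\mathcal{F}(n_1)+\alpha_\mathcal{F}(n_2).
\]

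Applying Lemma~\ref{subadditivity} to $g=\alpha_\mathcal{F}$ gives the desired conclusion that $\lim_{n\to\infty}\alpha_\mathcal{F}(n)/n$ exists and equals $\inf_{n\geq 1}\alpha_\mathcal{F}(n)/n$. There is no real obstacle here; the only mild subtlety is to ensure $I_n$ is a non-trivial proper ideal so that $\alpha_\mathcal{F}(n)$ is a well-defined positive integer, which is handled by the observations above on $\mathcal{Q}(C)$.
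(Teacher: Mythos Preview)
Your proof is correct and follows essentially the same approach as the paper: reduce to Fekete's lemma (Lemma~\ref{subadditivity}), verify subadditivity via the filtration property $I_{n_1}I_{n_2}\subset I_{n_1+n_2}$ from Lemma~\ref{filtration}, and note $\alpha_\mathcal{F}(n)\geq 1$ since $1\notin I_n$. You add a bit more detail on why $I_n$ is a non-trivial proper ideal, which the paper leaves implicit.
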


\begin{proof} By Lemma~\ref{subadditivity} we need only show that 
$\alpha_\mathcal{F}$ is subadditive. Pick $t^a\in I_{n_1}$ and $t^b\in I_{n_2}$
 such that $\deg(t^a)=\alpha_\mathcal{F}(n_1)$ and
$\deg(t^b)=\alpha_\mathcal{F}(n_2)$. By Lemma~\ref{filtration}, one has $t^at^b\in
I_{n_1+n_2}$. Hence
$$
\alpha_\mathcal{F}(n_1+n_2)\leq\deg(t^at^b)=\deg(t^a)+\deg(t^b)
=\alpha_\mathcal{F}(n_1)+\alpha_\mathcal{F}(n_2).
$$
\quad Note that $\alpha_\mathcal{F}(n)\geq 1$ because $1\notin I_n$ for
$n\geq 1$. 
\end{proof}

\begin{theorem}\label{schrijver-number-lp} 
Let $\mathcal{F}=\{I_n\}_{n=0}^\infty$ be the filtration of the covering polyhedron 
$\mathcal{Q}(C)$.  
If $\widehat{\alpha}(\mathcal{F})$ is the Waldschmidt constant 
defined in Eq.~\eqref{waldschmidt-eq} and
$y=(y_1,\ldots,y_s)$, then the linear program
\begin{enumerate}
\item[] {\rm minimize} $y_1+\cdots+y_s$

\item[] {\rm subject to}

\item[] $yC\geq 1$ {\rm and }$y\geq 0$
\end{enumerate}
has an optimal value equal to $\widehat{\alpha}(\mathcal{F})$, which is 
attained at a rational vertex  $\beta$ of $\mathcal{Q}(C)$.
\end{theorem}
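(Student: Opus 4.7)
The plan is to show that the LP has a finite optimum attained at a rational vertex $\beta$ of $\mathcal{Q}(C)$, and then match $|\beta|:=\beta_1+\cdots+\beta_s$ against $\widehat{\alpha}(\mathcal{F})$ with two inequalities.

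First I would verify the LP is well posed. The feasible region is precisely $\mathcal{Q}(C)$, which is a nonempty rational polyhedron: $\mathcal{Q}(C)$ is nonempty because the columns of $C$ are nonzero, so taking $y=\lambda\mathbf{1}$ with $\lambda$ large enough yields $yC\geq 1$; and $\mathcal{Q}(C)\subset\mathbb{R}_+^s$ contains no lines, hence is pointed. Because the objective $y_1+\cdots+y_s$ is bounded below by $0$ on the feasible region, standard linear programming over a pointed rational polyhedron (see \cite[Section~7.2]{Schr}) guarantees that the minimum is finite and attained at a vertex $\beta$ of $\mathcal{Q}(C)$; since $C$ is rational, $\beta$ is rational. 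Let $\mu:=|\beta|$ denote the optimal value.

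Next I would prove $\widehat{\alpha}(\mathcal{F})\geq\mu$. By Lemma~\ref{subadditive-g}, $\widehat{\alpha}(\mathcal{F})=\inf_{n\geq 1}\alpha_{\mathcal{F}}(n)/n$. Pick any $n\geq 1$ and any monomial $t^a\in I_n$ with $\deg(t^a)=\alpha_{\mathcal{F}}(n)$. By definition of $I_n$, the point $a/n$ lies in $\mathcal{Q}(C)$, so it is feasible for the LP. Optimality of $\beta$ gives
\[
\frac{\alpha_{\mathcal{F}}(n)}{n}=\frac{|a|}{n}=\left|\frac{a}{n}\right|\geq\mu,
\]
and taking the infimum over $n$ yields $\widehat{\alpha}(\mathcal{F})\geq\mu$.

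For the reverse inequality $\widehat{\alpha}(\mathcal{F})\leq\mu$, since $\beta\in\mathbb{Q}_+^s$, there exists $n_0\in\mathbb{N}_+$ with $n_0\beta\in\mathbb{N}^s$. Because $(n_0\beta)/n_0=\beta\in\mathcal{Q}(C)$, the monomial $t^{n_0\beta}$ lies in $I_{n_0}$, so
\[
\alpha_{\mathcal{F}}(n_0)\leq\deg\bigl(t^{n_0\beta}\bigr)=n_0|\beta|=n_0\mu.
\]
Hence $\widehat{\alpha}(\mathcal{F})\leq\alpha_{\mathcal{F}}(n_0)/n_0\leq\mu$, finishing the proof. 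The main step is the reduction to LP theory to produce a rational vertex $\beta$; once $\beta$ is in hand, both inequalities are almost immediate, the first from the LP inequality applied to feasible points $a/n$, and the second from choosing an integer clearing denominator for $\beta$.
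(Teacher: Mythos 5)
Your proof is correct and follows essentially the same approach as the paper: both obtain a rational vertex $\beta$ minimizing the objective over $\mathcal{Q}(C)$ and then establish $|\beta|=\widehat{\alpha}(\mathcal{F})$ via two inequalities, one from feasibility of $a/n$ for a monomial $t^a\in I_n$ and one from clearing denominators of $\beta$. The only difference is cosmetic — you prove $\widehat{\alpha}(\mathcal{F})\geq|\beta|$ directly from the infimum characterization in Lemma~\ref{subadditive-g}, while the paper derives the same inequality by contradiction.
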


\begin{proof} There is a vertex $\beta=(\beta_1,\ldots,\beta_s)$ of
$\mathcal{Q}(C)$ such that $|\beta|:=\sum_{i=1}^s\beta_i$ is the optimal
value of the linear program \cite[Proposition~1.1.41]{monalg-rev}. In
particular $|\beta|\leq |\beta'|$ for any other vertex $\beta'$ of 
$\mathcal{Q}(C)$. As $C$ is a rational matrix, $\beta$ has
non-negative rational entries. Then, there is an integer $n\geq 1$
such that $n\beta$ is integral. Writing $\beta=(n\beta)/n$, 
we obtain that $(n\beta)/n$ is in $\mathcal{Q}(C)$, that is,
$t^{n\beta}\in I_n$. Thus, $\deg(t^{n\beta})=n|\beta|\geq
\alpha_\mathcal{F}(n)$, and consequently $|\beta|\geq \alpha_\mathcal{F}(n)/n$.
By Lemma~\ref{subadditive-g}, $\widehat{\alpha}(\mathcal{F})$ is the infimum of
all $\alpha_\mathcal{F}(p)/p$ $(p\in\mathbb{N}_+)$. Thus, 
$|\beta|\geq\widehat{\alpha}(\mathcal{F})$. To show equality we proceed by contradiction
assuming $|\beta|>\widehat{\alpha}(\mathcal{F})$. By Lemma~\ref{subadditive-g},
the sequence $\{\alpha_\mathcal{F}(p)/p\}_{p=1}^{\infty}$ converges to
$\widehat{\alpha}(\mathcal{F})$. Hence, there is $n\geq 1$ such that
$\alpha_\mathcal{F}(n)/n<|\beta|$. Pick $t^a\in I_n$
such that $\deg(t^a)=\alpha_\mathcal{F}(n)$. As $a/n$ is in
$\mathcal{Q}(C)$ and $|\beta|$ is the optimal value of the linear
program, we get 
$$  
|\beta|\leq{|a|}/{n}={\deg(t^a)}/{n}={\alpha_\mathcal{F}(n)}/{n}<|\beta|,
$$
a contradiction. Thus, $|\beta|=\widehat{\alpha}(\mathcal{F})$.
\end{proof}

\begin{corollary}\label{schrijver-constant}
Let $\mathcal{F}$ be the filtration associated to a covering
polyhedron $\mathcal{Q}$ and let $V(\mathcal{Q})$ be the vertex set of $\mathcal{Q}$. If 
$\alpha(\mathcal{Q})=\min\{|v|\colon v\in V(\mathcal{Q})\}$, 
then $\alpha(\mathcal{Q})=\widehat{\alpha}(\mathcal{F})$.
\end{corollary}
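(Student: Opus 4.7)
The plan is to derive this corollary as an immediate consequence of Theorem~\ref{schrijver-number-lp}. That theorem tells us that $\widehat{\alpha}(\mathcal{F})$ equals the optimal value of the linear program whose feasible region is exactly $\mathcal{Q}=\mathcal{Q}(C)$ and whose objective is $y\mapsto|y|=y_1+\cdots+y_s$, and moreover that this optimum is attained at a rational vertex $\beta$ of $\mathcal{Q}$.

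From here the argument splits into two one-line inequalities. First, let $\beta\in V(\mathcal{Q})$ be a vertex at which the LP attains its minimum, so that $\widehat{\alpha}(\mathcal{F})=|\beta|$. Since $\beta$ is itself a vertex of $\mathcal{Q}$, we have $|\beta|\geq\min\{|v|:v\in V(\mathcal{Q})\}=\alpha(\mathcal{Q})$. For the reverse inequality, pick any vertex $v^*\in V(\mathcal{Q})$ with $|v^*|=\alpha(\mathcal{Q})$; then $v^*$ is a feasible point of the linear program, so the optimal value $|\beta|$ can only be less than or equal to $|v^*|$, giving $\widehat{\alpha}(\mathcal{F})=|\beta|\leq\alpha(\mathcal{Q})$. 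Combining the two bounds yields the equality $\widehat{\alpha}(\mathcal{F})=\alpha(\mathcal{Q})$.

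There is essentially no obstacle: the corollary is a direct repackaging of Theorem~\ref{schrijver-number-lp}, once one uses the fact (built into the theorem) that an LP with bounded optimum over a pointed rational polyhedron attains its optimum at a vertex of the feasible region. The only tiny thing to notice is that the vertex set $V(\mathcal{Q})$ is non-empty, which holds because $\mathcal{Q}$ is a pointed rational polyhedron (as noted in the introduction, any covering polyhedron is pointed).
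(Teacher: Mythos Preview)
Your proof is correct and follows essentially the same approach as the paper: both derive the corollary directly from Theorem~\ref{schrijver-number-lp}, using that the LP optimum is attained at a vertex of $\mathcal{Q}$. The paper phrases the second inequality slightly differently, invoking Proposition~\ref{cpr} to deduce $|v|\leq |a|$ for every $a\in\mathcal{Q}$, whereas your direct two-inequality argument avoids that extra citation; the content is the same.
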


\begin{proof} The optimal value of the linear program of Theorem~\ref{schrijver-number-lp}
is equal to $|v|$ for some vertex $v$ of $\mathcal{Q}$. Thus, it
suffices to note that $|v|\leq |a|$ for any $a\in\mathcal{Q}$. This
follows from Proposition~\ref{cpr}.
\end{proof}

\begin{proposition}\label{integral-qc} 
Let $\mathcal{Q}(C)$ be a covering polyhedron and let 
$\mathcal{F}=\{I_n\}_{n=0}^\infty$ be its associated filtration. 
Then $\alpha_{\mathcal{F}}(1)\geq \widehat{\alpha}(\mathcal{F})$, with equality if
$\mathcal{Q}(C)$ is integral.
\end{proposition}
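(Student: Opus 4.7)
The plan is to derive both assertions directly from results already established in the excerpt, so the proof should be short.

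For the inequality $\alpha_{\mathcal{F}}(1)\geq \widehat{\alpha}(\mathcal{F})$, I would invoke Lemma~\ref{subadditive-g}, which tells us that $\widehat{\alpha}(\mathcal{F})$ is the infimum of the sequence $\{\alpha_{\mathcal{F}}(n)/n\}_{n\geq 1}$. Taking $n=1$ in the infimum gives $\widehat{\alpha}(\mathcal{F})\leq \alpha_{\mathcal{F}}(1)/1=\alpha_{\mathcal{F}}(1)$, which is exactly the desired bound.

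For the equality under the assumption that $\mathcal{Q}(C)$ is integral, the key is to use Theorem~\ref{schrijver-number-lp}. That theorem guarantees that the optimal value $\widehat{\alpha}(\mathcal{F})$ of the stated linear program is attained at a rational vertex $\beta$ of $\mathcal{Q}(C)$, so $\widehat{\alpha}(\mathcal{F})=|\beta|$. By the hypothesis of integrality, every vertex of $\mathcal{Q}(C)$ lies in $\mathbb{Z}^s$, and since $\mathcal{Q}(C)\subset\mathbb{R}_+^s$, in fact $\beta\in\mathbb{N}^s$. Then $\beta/1=\beta\in\mathcal{Q}(C)$, so by the very definition of $I_1$ in Eq.~\eqref{sequence-def}, the monomial $t^{\beta}$ belongs to $I_1$. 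Consequently
\[
\alpha_{\mathcal{F}}(1)\leq \deg(t^{\beta})=|\beta|=\widehat{\alpha}(\mathcal{F}),
\]
which combined with the first inequality yields equality.

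There is no real obstacle here: once one recognizes that Theorem~\ref{schrijver-number-lp} produces a vertex witness for $\widehat{\alpha}(\mathcal{F})$, the integrality of $\mathcal{Q}(C)$ immediately promotes that vertex to an exponent vector of a generator of $I_1$. The only thing worth being careful about is observing that $\beta\geq 0$ together with $\beta\in\mathbb{Z}^s$ gives $\beta\in\mathbb{N}^s$, so that $t^{\beta}$ is an honest monomial in $S$.
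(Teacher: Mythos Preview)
Your proof is correct and follows essentially the same approach as the paper: both use Lemma~\ref{subadditive-g} for the inequality and then, under integrality, pick an integral vertex witnessing $\widehat{\alpha}(\mathcal{F})$ to produce a monomial in $I_1$ of that degree. The only cosmetic difference is that the paper cites Corollary~\ref{schrijver-constant} rather than Theorem~\ref{schrijver-number-lp} for the existence of the vertex, but the content is identical.
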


\begin{proof} By Lemma~\ref{subadditive-g}, $\widehat{\alpha}(\mathcal{F})$ is the infimum of
all $\alpha_\mathcal{F}(n)/n$ $(n\in\mathbb{N}_+)$. Thus,
$\alpha_\mathcal{F}(1)/1\geq\widehat{\alpha}(\mathcal{F})$. Now, assume that $\mathcal{Q}(C)$ is integral. 
By Corollary~\ref{schrijver-constant}, $\widehat{\alpha}(\mathcal{F})$ is equal to $|v|$ for some vertex $v$ of
$\mathcal{Q}(C)$. As $\mathcal{Q}(C)$ is integral, $v$ is integral,
and consequently $t^v\in I_1$. Thus, $|v|=\deg(t^v)\geq 
\alpha_{\mathcal{F}}(1)$, and $\widehat{\alpha}(\mathcal{F})$ is equal to $\alpha_{\mathcal{F}}(1)$.
\end{proof}

\begin{corollary}\label{jun12-21} Let $I$ be a complete
ideal of $S$ minimally generated by monomials $t^{v_1},\ldots,t^{v_q}$ of degree $d$ and let
$\alpha(\overline{I^n})$ be the initial degree of $\overline{I^n}$, then
$\lim_{n\rightarrow\infty}{\alpha(\overline{I^n})}/{n}=|v_q|$.
\end{corollary}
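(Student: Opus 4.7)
The plan is to identify the limit in question as the Waldschmidt constant $\widehat\alpha(\mathcal{F})$ of the filtration $\mathcal{F}=\{\overline{I^n}\}_{n=0}^\infty$, which by Eq.~\eqref{jun21-21} is precisely the filtration associated to the covering polyhedron ${\rm NP}(I)$ (viewed as such via Proposition~\ref{np-qa}). Since all generators of $I$ have degree $d$, we have $|v_q|=d$, so the goal reduces to showing $\widehat\alpha(\mathcal{F})=d$.

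For the upper bound, I would invoke the hypothesis that $I$ is complete. This means $I_1=\overline{I^1}=\overline{I}=I$, so $\alpha_{\mathcal{F}}(1)=\alpha(I)=d$ (the initial degree of $I$ is just the common degree of its minimal generators). Proposition~\ref{integral-qc} then gives $d=\alpha_{\mathcal{F}}(1)\geq\widehat\alpha(\mathcal{F})$.

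For the lower bound, I would use the explicit description ${\rm NP}(I)=\mathbb{R}_+^s+{\rm conv}(v_1,\ldots,v_q)$ from Eq.~\eqref{NP-def}. Fix $n\geq 1$ and let $t^a$ be a minimal generator of $\overline{I^n}$. Then $a/n\in{\rm NP}(I)$, so there exist $\epsilon\in\mathbb{R}_+^s$ and $\lambda_1,\ldots,\lambda_q\geq 0$ with $\sum_i\lambda_i=1$ such that
\[
\frac{a}{n}=\epsilon+\sum_{i=1}^q \lambda_i v_i.
\]
Taking coordinate sums and using $|v_i|=d$ for all $i$ yields $|a|/n=|\epsilon|+d\geq d$, hence $\alpha_{\mathcal{F}}(n)/n\geq d$ for every $n\geq 1$. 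Passing to the limit gives $\widehat\alpha(\mathcal{F})\geq d$.

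Combining both inequalities produces $\widehat\alpha(\mathcal{F})=d=|v_q|$, which is the desired equality. There is no real obstacle here; the argument is essentially an application of Proposition~\ref{integral-qc} together with the fact that a convex combination of vectors of common degree $d$ again has $\ell^1$-norm at least $d$ after adding a non-negative vector. The only point worth double-checking is that completeness of $I$ is exactly what makes $\alpha_{\mathcal{F}}(1)$ coincide with $\alpha(I)$ rather than with the possibly smaller $\alpha(\overline{I})$.
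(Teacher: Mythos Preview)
Your proof is correct. The difference from the paper's argument lies only in how the lower bound $\widehat\alpha(\mathcal{F})\geq d$ is obtained. The paper observes that ${\rm NP}(I)=\mathcal{Q}(B)$ is an \emph{integral} covering polyhedron (its vertices lie among $v_1,\ldots,v_q$ by Proposition~\ref{np-qa}(c)), and then invokes the equality case of Proposition~\ref{integral-qc} to conclude $\widehat\alpha(\mathcal{F})=\alpha_{\mathcal F}(1)=\alpha(I)=d$ in one stroke. You instead use only the inequality $\alpha_{\mathcal F}(1)\geq\widehat\alpha(\mathcal{F})$ from Proposition~\ref{integral-qc} and supply the reverse inequality by a direct $\ell^1$-norm computation on the representation $a/n=\epsilon+\sum\lambda_iv_i$. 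Your route is slightly more elementary and makes the role of the equigenerated hypothesis $|v_i|=d$ transparent, while the paper's route is shorter and highlights that integrality of the polyhedron is the real mechanism behind the equality. Both arguments use completeness of $I$ in exactly the same place, namely to identify $\alpha_{\mathcal F}(1)$ with $\alpha(I)$.
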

\begin{proof} Let $A$ be the incidence matrix of $I$, let
$u_1,\ldots,u_r$ be the vertices of $\mathcal{Q}(A)$, and let $B$ be
the matrix with column vectors $u_1,\ldots,u_r$. The filtration 
associated to $\mathcal{Q}(B)$ is $\mathcal{F}=\{\overline{I^n}\}_{n=0}^\infty$
because $\mathcal{Q}(B)$ is the Newton polyhedron of $I$ (see
Proposition~\ref{np-qa}). Since $\mathcal{Q}(B)$ is integral, 
using that $\alpha(I)=|v_q|$ and $I=\overline{I}$, by
Proposition~\ref{integral-qc}, the equality $\widehat{\alpha}(\mathcal{F})=|v_q|$ follows.  
\end{proof}

\begin{corollary}\cite[Theorem~4.3]{Seceleanu-packing}\label{jun12-21-1}
Let $I$ be a squarefree monomial ideal. If $\mathcal{Q}(I)$ is 
integral, then $\widehat{\alpha}(I)=\alpha(I)$ and
$\widehat{\alpha}(I^\vee)=\alpha(I^\vee)$.
\end{corollary}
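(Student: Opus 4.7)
The plan is to apply Proposition~\ref{integral-qc} twice, once to each of the two relevant symbolic-power filtrations, and to invoke the duality theorem of Lehman (\cite[Theorem~1.8]{cornu-book}) in order to bridge the hypothesis on $\mathcal{Q}(I)$ to a statement about $\mathcal{Q}(I^\vee)$. Throughout I use Eq.~\eqref{jun21-21-1}, which identifies the filtration $\{I^{(n)}\}_{n=0}^\infty$ of symbolic powers of a squarefree monomial ideal $I$ with the filtration associated to the covering polyhedron $\mathcal{Q}(I^\vee)$.

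First I would establish $\widehat{\alpha}(I^\vee)=\alpha(I^\vee)$. Since $I^\vee$ is itself a squarefree monomial ideal and $(I^\vee)^\vee=I$, the filtration of symbolic powers of $I^\vee$ equals the filtration $\mathcal{F}'=\{(I^\vee)^{(n)}\}_{n=0}^\infty$ associated to $\mathcal{Q}(I)$. The hypothesis says $\mathcal{Q}(I)$ is integral, and $\alpha_{\mathcal{F}'}(1)=\alpha((I^\vee)^{(1)})=\alpha(I^\vee)$, so Proposition~\ref{integral-qc} yields $\widehat{\alpha}(I^\vee)=\widehat{\alpha}(\mathcal{F}')=\alpha_{\mathcal{F}'}(1)=\alpha(I^\vee)$.

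Next I would obtain $\widehat{\alpha}(I)=\alpha(I)$. Here I would invoke \cite[Theorem~1.8]{cornu-book} (Lehman's duality): $\mathcal{Q}(I)$ is integral if and only if $\mathcal{Q}(I^\vee)$ is integral. Thus the hypothesis also forces $\mathcal{Q}(I^\vee)$ to be integral. The symbolic-power filtration $\mathcal{F}=\{I^{(n)}\}_{n=0}^\infty$ of $I$ is associated to $\mathcal{Q}(I^\vee)$, and since $I$ is squarefree we have $\alpha_\mathcal{F}(1)=\alpha(I^{(1)})=\alpha(I)$. Applying Proposition~\ref{integral-qc} to $\mathcal{F}$ then gives $\widehat{\alpha}(I)=\alpha(I)$.

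The main obstacle, and really the only non-formal step, is to correctly quote \cite[Theorem~1.8]{cornu-book} as the two-way integrality/ideality duality between a clutter and its blocker; everything else consists of identifying each symbolic-power filtration with the filtration of the appropriate covering polyhedron via Eq.~\eqref{jun21-21-1} and then reading off the equality from Proposition~\ref{integral-qc}. No further computation is needed.
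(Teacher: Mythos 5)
Your proof is correct and follows essentially the same route as the paper's: identify each symbolic-power filtration with the filtration of the appropriate covering polyhedron via Eq.~\eqref{jun21-21-1}, transfer integrality between $\mathcal{Q}(I)$ and $\mathcal{Q}(I^\vee)$ by Lehman's duality theorem in \cite{cornu-book}, and read off the conclusion from Proposition~\ref{integral-qc}. The only cosmetic difference is that you handle the two equalities symmetrically (applying the duality once, to transfer to $\mathcal{Q}(I^\vee)$), whereas the paper phrases both applications through the single filtration $\{I^{(n)}\}$; the underlying argument is the same.
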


\begin{proof} The filtration
$\mathcal{F}$ associated to $\mathcal{Q}(I^\vee)$ is
given by $I_n=I^{(n)}$ for $n\geq 1$ (Eq.~\eqref{jun21-21-1}). 
Thus, $I=I_1$
because $I$ is squarefree. Then, $\widehat{\alpha}(\mathcal{F})$ is the Waldschmidt constant
$\widehat{\alpha}(I)$ 
\cite{Cooper-symbolic,Francisco-TAMS}. According to 
\cite[Theorem~1.17]{cornu-book} $\mathcal{Q}(I)$ is integral if and
only if $\mathcal{Q}(I^\vee)$ is integral. Then, by
Proposition~\ref{integral-qc}, 
we get
$\widehat{\alpha}(I)=\widehat{\alpha}(\mathcal{F})=\alpha_\mathcal{F}(1)=\alpha(I)$
and $\widehat{\alpha}(I^\vee)=\alpha(I^\vee)$. 
\end{proof}

\begin{proposition}\label{equality-in} Let $\mathcal{Q}(C)$ be a covering polyhedron and let 
$\mathcal{F}=\{I_n\}_{n=0}^\infty$ be its associated filtration. The
following hold.
\begin{enumerate}
\item[(a)] $\overline{I_1^n}=I_n$ for all $n\geq 1$ if and only if 
$\mathcal{Q}(C)$ is integral.
\item[(b)] $I_1^n=I_n$ for all $n\geq 1$ if and only if 
$\mathcal{Q}(C)$ is integral and $I_1$ is normal.
\end{enumerate}
\end{proposition}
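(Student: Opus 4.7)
The plan is to translate each ideal-theoretic equality into a statement about the polyhedra $\mathcal{Q}(C)$ and ${\rm NP}(I_1)$, using the two descriptions
$\overline{I_1^n}=(\{t^a\mid a/n\in{\rm NP}(I_1)\})$ from Eq.~\eqref{jun21-21} and $I_n=(\{t^a\mid a/n\in\mathcal{Q}(C)\})$ from Eq.~\eqref{sequence-def}. The inclusion $I_1^n\subset\overline{I_1^n}\subset I_n$ is already in Lemma~\ref{filtration}(a), so only one direction needs work in each part.

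For part (a), I would first record the unconditional inclusion ${\rm NP}(I_1)\subseteq\mathcal{Q}(C)$: every generator $t^a$ of $I_1$ satisfies $a\in\mathcal{Q}(C)$, and $\mathcal{Q}(C)$ is of blocking type (Lemma~\ref{blocking-type}), hence closed under adding $\mathbb{R}_+^s$. Then I would argue that the family of equalities $\overline{I_1^n}=I_n$ for all $n\geq 1$ is equivalent to $\mathcal{Q}(C)\cap\mathbb{Q}_+^s={\rm NP}(I_1)\cap\mathbb{Q}_+^s$: any rational point $v$ in either polyhedron can be written $a/n$ with $a\in\mathbb{N}^s$ and $n\geq 1$, so membership in the polyhedra translates exactly into membership of $t^a$ in the corresponding ideal. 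Since both polyhedra are closed and their rational points are dense, this is in turn equivalent to $\mathcal{Q}(C)={\rm NP}(I_1)$.

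It then remains to show $\mathcal{Q}(C)={\rm NP}(I_1)$ iff $\mathcal{Q}(C)$ is integral. For the forward direction, each vertex of $\mathcal{Q}(C)={\rm NP}(I_1)$ must be a vertex of ${\rm NP}(I_1)$, hence the exponent vector of a minimal generator of $I_1$ by Proposition~\ref{np-qa}(c); in particular it is integral, so $\mathcal{Q}(C)$ is integral. Conversely, if $\mathcal{Q}(C)$ is integral, Proposition~\ref{cpr} writes $\mathcal{Q}(C)=\mathbb{R}_+^s+{\rm conv}(\beta_1,\ldots,\beta_r)$ with integral $\beta_i$; then $t^{\beta_i}\in I_1$, so $\beta_i\in{\rm NP}(I_1)$, and the blocking-type description of ${\rm NP}(I_1)$ yields $\mathcal{Q}(C)\subseteq{\rm NP}(I_1)$, giving equality.

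For part (b), I would just combine: the chain $I_1^n\subset\overline{I_1^n}\subset I_n$ forces $I_1^n=I_n$ for all $n\geq 1$ to be equivalent to the conjunction of $I_1^n=\overline{I_1^n}$ for all $n\geq 1$ (which is, by definition, $I_1$ being normal) and $\overline{I_1^n}=I_n$ for all $n\geq 1$, and by part (a) the latter is exactly the integrality of $\mathcal{Q}(C)$. The only delicate step in the whole argument is the passage from the countable family of ideal equalities to the equality of polyhedra in part (a), and this rests on the density of rational points in each polyhedron, so I do not expect a real obstacle.
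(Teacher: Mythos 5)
Your proof is correct, and the overall strategy — reduce the ideal equalities to the polyhedral equality $\mathcal{Q}(C)={\rm NP}(I_1)$ and then characterize that geometrically — is exactly the one the paper uses for the forward direction of (a). Where you genuinely diverge is in the backward direction. The paper proves ``$\mathcal{Q}(C)$ integral $\Rightarrow$ $I_n\subset\overline{I_1^n}$'' computationally: it starts from $t^a\in I_n$, invokes Proposition~\ref{cpr} and Lemma~\ref{change-of-coefficients} to express $a/n$ as a rational convex-plus-cone combination, deduces $(t^a)^p\in(I_1^n)^p$ for some $p$, and then applies the integral-closure criterion of Lemma~\ref{icd}. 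You instead prove the geometric inclusion $\mathcal{Q}(C)\subseteq{\rm NP}(I_1)$ directly from Proposition~\ref{cpr} and the blocking-type description of the Newton polyhedron, and recover all the ideal equalities at once from the density of rational points in a rational polyhedron. This is a cleaner, more unified route: it avoids Lemma~\ref{icd} and Lemma~\ref{change-of-coefficients} entirely, makes the underlying equivalence ``$\overline{I_1^n}=I_n$ for all $n$'' $\Leftrightarrow$ ``$\mathcal{Q}(C)={\rm NP}(I_1)$'' $\Leftrightarrow$ ``$\mathcal{Q}(C)$ integral'' fully explicit, and treats both directions symmetrically through the same density step. The only ingredient you rely on that the paper keeps implicit is the density of rational points in a rational polyhedron, which is standard and in fact sits behind the paper's Lemma~\ref{change-of-coefficients} anyway. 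Your treatment of part (b) is identical to the paper's.
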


\begin{proof} (a): $\Rightarrow$) As the Newton polyhedron ${\rm NP}(I_1)$
of $I_1$ is integral it suffices to show the equality
${\rm NP}(I_1)=\mathcal{Q}(C)$. The inclusion ``$\subset$'' holds in
general. Indeed, let $G(I_1)$ be the minimal generating set of $I_1$. 
Note that the set $\{a\vert\, t^a\in G(I_1)\}$ is contained in 
$\mathcal{Q}(C)$ by definition of $I_1$. Hence 
$$ 
{\rm NP}(I_1)=\mathbb{R}_+^s+{\rm conv}(\{a\vert\, t^a\in
G(I_1)\})\subset\mathcal{Q}(C).
$$
\quad To show the inclusion ``$\supset$'' take any vertex $a$ of
$\mathcal{Q}(C)$. As $a$ has rational entries, there is
$n\in\mathbb{N}_+$ such that $na\in\mathcal{Q}(C)\bigcap\mathbb{N}^s$.
Then, $(na)/n\in\mathcal{Q}(C)$ and $t^{na}\in I_n$. Thus, by
hypothesis, one has $t^{na}\in\overline{I_1^n}$. Then, by
Proposition~\ref{np-qa}(a), we get that $a=(na)/n$ is in ${\rm NP}(I_1)$.

(a): $\Leftarrow$) By Lemma~\ref{filtration}, $I_n$ is complete and 
$\overline{I_1^n}\subset I_n$. Let $\beta_1,\ldots,\beta_r$ be the
vertices of $\mathcal{Q}(C)$. As $\mathcal{Q}(C)$ is integral, $\beta_i$
is integral for $i=1,\ldots,r$.  To show the inclusion
$I_n\subset\overline{I_1^n}$ take $t^a$ in $I_n$, that is, $a$ is in
$n\mathcal{Q}(C)$. By Proposition~\ref{cpr}, one has
$$
\mathcal{Q}(C)=\mathbb{R}_+^s+{\rm
conv}(\beta_1,\ldots,\beta_r).
$$
\quad Hence, using Lemma~\ref{change-of-coefficients}, we get 
$(t^a)^p\in(I_1^n)^p$ for some $p\geq 1$ and, by
Lemma~\ref{icd}, $t^a\in \overline{I_1^n}$. 

(b): $\Rightarrow$) Recall that by Lemma~\ref{filtration} one has 
$I_1^n\subset\overline{I_1^n}\subset I_n$ for all $n\geq 1$. Hence, 
$I_1^n=\overline{I_1^n}=I_n$ for all $n\geq 1$. The equality on the left 
shows that $I_1$ is normal and, by part (a), the equality on the
right shows that $\mathcal{Q}(C)$ is integral.

(b): $\Leftarrow$) This follows at once from part (a).
\end{proof}

\begin{definition}\label{mfmc-def} Let $\mathcal{C}$ be a clutter and
let $A$ be the incidence matrix of $I=I(\mathcal{C})$. The clutter
$\mathcal{C}$ has the \textit{max-flow min-cut} property if both sides 
of the LP-duality equation
\begin{equation}\label{jun6-2-03}
{\rm min}\{\langle \alpha,x\rangle \vert\, x\geq 0; xA\geq 1\}=
{\rm max}\{\langle y, 1\rangle \vert\, y\geq 0; Ay\leq\alpha\} 
\end{equation}
have integral optimum solutions $x$ and $y$ for each non-negative
integral vector $\alpha$. The clutter $\mathcal{C}$ is called 
\textit{Fulkersonian} if the covering polyhedron $\mathcal{Q}(I)$ of
$I$ is integral. 
\end{definition}

\begin{corollary}\label{ntf-char} Let
$I$ be a squarefree monomial ideal. The following hold. 
\begin{enumerate}
\item[(a)] \cite{clutters} 
$I^n=I^{(n)}$ for all $n\geq 1$ if and only if $\mathcal{Q}(I)$ is integral
and $I$ is normal. 
\item[(b)] \cite{clutters,Trung} $\overline{I^n}=I^{(n)}$
for all $n\geq 1$ if and only if $\mathcal{Q}(I)$ is integral.
\item[(c)] 
{\rm (\cite[Corollary~3.14]{clutters}, \cite[Theorem
1.4]{hhtz})} 
If $I$ is the edge ideal of
the clutter $\mathcal{C}$, then $I^n=I^{(n)}$ for all
$n\geq 1$ if and only if $\mathcal{C}$ has the max-flow min-cut
property.
\end{enumerate}
\end{corollary}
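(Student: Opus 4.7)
The plan is to recover the three classical statements directly from Proposition~\ref{equality-in} applied to the filtration of symbolic powers, which by Eq.~\eqref{jun21-21-1} is exactly the filtration $\mathcal{F}=\{I^{(n)}\}_{n=0}^\infty$ associated to the covering polyhedron $\mathcal{Q}(I^\vee)$ of the Alexander dual. Since $I$ is squarefree, one has $I_1=I^{(1)}=I$, so conditions of the form ``$I_1^n=I_n$'' and ``$\overline{I_1^n}=I_n$'' in Proposition~\ref{equality-in} translate verbatim into ``$I^n=I^{(n)}$'' and ``$\overline{I^n}=I^{(n)}$'' for all $n\geq 1$.

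For part (b), I would apply Proposition~\ref{equality-in}(a) to $\mathcal{F}$: it gives $\overline{I^n}=I^{(n)}$ for all $n$ iff $\mathcal{Q}(I^\vee)$ is integral. To replace ``$\mathcal{Q}(I^\vee)$ integral'' by ``$\mathcal{Q}(I)$ integral'' I would invoke the self-dual integrality statement \cite[Theorem~1.17]{cornu-book} already used in the proof of Corollary~\ref{jun12-21-1}, which says that $\mathcal{Q}(I)$ is integral if and only if $\mathcal{Q}(I^\vee)$ is integral. For part (a), I would analogously apply Proposition~\ref{equality-in}(b) to the same filtration: it yields $I^n=I^{(n)}$ for all $n$ iff $\mathcal{Q}(I^\vee)$ is integral and $I_1=I$ is normal; the integrality-duality result converts this to ``$\mathcal{Q}(I)$ integral and $I$ normal''.

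For part (c), I would combine part (a) with the known characterization of the max-flow min-cut property cited in the introduction, namely \cite[Theorem~3.4]{clutters}, which asserts that a clutter $\mathcal{C}$ has the max-flow min-cut property if and only if $\mathcal{Q}(I(\mathcal{C}))$ is integral and $I(\mathcal{C})$ is normal. Together with part (a) applied to $I=I(\mathcal{C})$, this is equivalent to $I^n=I^{(n)}$ for all $n\geq 1$, proving (c).

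There is no serious obstacle: the whole proof is a dictionary between the language of the filtration $\{I_n\}$ associated to a covering polyhedron $\mathcal{Q}(C)$ (Proposition~\ref{equality-in}) and the classical language of symbolic powers, with the only nontrivial input being the Lehman-type self-duality of integrality for $\mathcal{Q}(I)$ and $\mathcal{Q}(I^\vee)$. The mildest care is needed in (c) to cite the correct equivalent form of the max-flow min-cut property so that it aligns with the hypothesis ``$\mathcal{Q}(I)$ integral and $I$ normal'' produced by part (a).
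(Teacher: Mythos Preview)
Your proposal is correct and matches the paper's proof essentially line for line: the paper also identifies $\mathcal{F}=\{I^{(n)}\}$ as the filtration of $\mathcal{Q}(I^\vee)$ via Eq.~\eqref{jun21-21-1}, applies Proposition~\ref{equality-in} to get the equivalences with integrality of $\mathcal{Q}(I^\vee)$ (plus normality of $I$ for part (a)), invokes \cite[Theorem~1.17]{cornu-book} to pass to integrality of $\mathcal{Q}(I)$, and deduces (c) from (a) together with \cite[Theorem~3.4]{clutters}.
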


\begin{proof} The filtration of $\mathcal{Q}(I^\vee)$ is 
$\mathcal{F}=\{I^{(n)}\}_{n=0}^\infty$ (Eq.~\eqref{jun21-21-1}).
Then, by Proposition~\ref{equality-in}, $\overline{I^n}=I^{(n)}$ for all $n\geq 1$ if and only if
$\mathcal{Q}(I^\vee)$ is integral, and $I^n=I^{(n)}$ for all $n\geq
1$ if and only if $\mathcal{Q}(I^\vee)$ is integral and $I$ is normal. Therefore (a) and (b) follow by
recalling that 
$\mathcal{Q}(I)$ is integral if and
only if $\mathcal{Q}(I^\vee)$ is integral \cite[Theorem~1.17]{cornu-book}.
Part (c) follows from (a), see \cite[Theorem~3.4]{clutters}. 
\end{proof}

Let $\mathcal{Q}=\mathcal{Q}(C)$ be the covering polyhedron of an
$s\times m$ matrix $C$ and let
${\rm SC}(\mathcal{Q})$ be the Simis cone defined in
Eq.~\eqref{simis-cone-def}. By \cite[Lemma~5.4]{korte} there exists a finite set 
${\mathcal H}\subset \mathbb{N}^{s+1}$ such 
that 
\begin{equation}\label{hb-eq}
{\rm SC}({\mathcal Q})=\mathbb{R}_+{\mathcal H}\ 
\mbox{ and }\ \mathbb{Z}^{s+1}\textstyle\bigcap
\mathbb{R}_+{\mathcal H}=\mathbb{N}{\mathcal H}, 
\end{equation}
where $\mathbb{N}{\mathcal H}$ is the additive subsemigroup of 
$\mathbb{N}^{s+1}$ generated 
by ${\mathcal H}$. 
If $\mathcal{H}$ 
is minimal, with respect to inclusion, then $\mathcal{H}$ is unique
\cite{Schr1} and is called the \textit{Hilbert basis} 
of ${\rm SC}(\mathcal{Q})$.

\begin{theorem}\cite{Schr1}\label{hb-description}
The Hilbert basis of ${\rm SC}(\mathcal{Q})$ is
the set of all integral 
vectors $0\neq \alpha\in{\rm SC}(\mathcal{Q})$ such
that $\alpha$ is not the sum of two other non-zero integral vectors in
${\rm SC}(\mathcal{Q})$. 
\end{theorem}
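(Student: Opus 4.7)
The plan is to prove the theorem by establishing a double inclusion. Let $\mathcal{H}$ denote the Hilbert basis, defined by the minimality of the conditions in \eqref{hb-eq}, and let $\mathcal{I}$ denote the set of non-zero integral vectors in ${\rm SC}(\mathcal{Q})$ that cannot be written as a sum of two non-zero integral vectors in ${\rm SC}(\mathcal{Q})$. I will first verify $\mathcal{I} \subseteq \mathcal{H}$ and then $\mathcal{H} \subseteq \mathcal{I}$, making essential use of the fact that ${\rm SC}(\mathcal{Q}) \subset \mathbb{R}_+^{s+1}$.

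For the inclusion $\mathcal{I} \subseteq \mathcal{H}$, take $\alpha \in \mathcal{I}$. By \eqref{hb-eq} one has $\alpha \in \mathbb{Z}^{s+1} \cap \mathbb{R}_+\mathcal{H} = \mathbb{N}\mathcal{H}$, so $\alpha = \sum_{h \in \mathcal{H}} n_h h$ for some $n_h \in \mathbb{N}$. Since $\alpha \neq 0$, the integer $N := \sum_h n_h$ is at least $1$. If $N \geq 2$, I would choose $h_0 \in \mathcal{H}$ with $n_{h_0} \geq 1$ and write $\alpha = h_0 + (\alpha - h_0)$; both summands are non-zero integral vectors of ${\rm SC}(\mathcal{Q})$, contradicting the irreducibility of $\alpha$. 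Hence $N = 1$ and $\alpha \in \mathcal{H}$.

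For the inclusion $\mathcal{H} \subseteq \mathcal{I}$, suppose for contradiction that some $h \in \mathcal{H}$ is reducible: write $h = \alpha + \beta$ with $\alpha, \beta$ non-zero integral vectors in ${\rm SC}(\mathcal{Q})$. By \eqref{hb-eq} both $\alpha$ and $\beta$ lie in $\mathbb{N}\mathcal{H}$, so I can expand $\alpha = \sum_{g \in \mathcal{H}} c_g\, g$ and $\beta = \sum_{g \in \mathcal{H}} d_g\, g$ with $c_g, d_g \in \mathbb{N}$. Setting $k := c_h + d_h$ and rearranging $h = \alpha + \beta$ gives
\[
(1-k)\,h \;=\; \sum_{g \neq h} (c_g + d_g)\, g.
\]
If $k \geq 1$, then $(k-1)h + \sum_{g \neq h}(c_g + d_g)\,g = 0$ is a sum of vectors in $\mathbb{R}_+^{s+1}$ that vanishes, so each summand must itself vanish; as $h \neq 0$ this forces $k = 1$ and $c_g = d_g = 0$ for every $g \neq h$, whence one of $\alpha, \beta$ equals $h$ and the other is $0$, contradicting non-vanishing. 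If instead $k = 0$, then $h \in \mathbb{N}(\mathcal{H} \setminus \{h\})$; substituting this expression for $h$ into the $\mathbb{N}\mathcal{H}$-expansion of an arbitrary integral vector of ${\rm SC}(\mathcal{Q})$ shows that $\mathcal{H}' := \mathcal{H} \setminus \{h\}$ already satisfies both conditions in \eqref{hb-eq}, contradicting the minimality of $\mathcal{H}$.

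The hardest point will be the dichotomy in the second inclusion, and both horns depend on the pointedness of ${\rm SC}(\mathcal{Q})$ inside the non-negative orthant (built in by the constraint $x \geq 0$ in \eqref{simis-cone-def}): without this non-negativity, summands in the cone could cancel and the argument would collapse. Granting it, the remainder is routine bookkeeping with integer combinations.
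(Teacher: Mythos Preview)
The paper does not give its own proof of this theorem; it is stated with a citation to Schrijver \cite{Schr1} and used as a black box. So there is no in-paper argument to compare against.

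Your proof is correct. Both inclusions go through as written. A couple of small points worth making explicit: in the first inclusion you silently use that $0\notin\mathcal{H}$ (true by minimality, since removing $0$ preserves both conditions in \eqref{hb-eq}), which is what guarantees $\alpha-h_0\neq 0$ when $N\geq 2$. In the second inclusion, the case $k=0$ requires checking that $\mathcal{H}'=\mathcal{H}\setminus\{h\}$ still satisfies the \emph{first} condition ${\rm SC}(\mathcal{Q})=\mathbb{R}_+\mathcal{H}'$, not just the semigroup condition; this follows because $h\in\mathbb{N}\mathcal{H}'\subset\mathbb{R}_+\mathcal{H}'$ gives $\mathbb{R}_+\mathcal{H}\subset\mathbb{R}_+\mathcal{H}'$. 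With those remarks, the argument is complete and is essentially the standard one for pointed rational cones.
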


\begin{corollary} Let $\mathcal{H}$ be the Hilbert basis of
${\rm SC}(\mathcal{Q})$. Then, the non-zero entries of any $\gamma$ in
$\mathcal{H}$ are relatively prime. 
\end{corollary}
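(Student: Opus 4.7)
The plan is to argue by contradiction using the characterization of the Hilbert basis from Theorem~\ref{hb-description}. Let $\gamma = (\gamma_1, \ldots, \gamma_{s+1}) \in \mathcal{H}$ and set $d := \gcd\{\gamma_i : \gamma_i \neq 0\}$. Suppose for contradiction that $d \geq 2$.

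The key observation is that the vector $\gamma/d$ is still an integer vector: the zero coordinates of $\gamma$ remain zero, while the non-zero coordinates are divided by their common factor $d$. Moreover, $\gamma/d$ lies in ${\rm SC}(\mathcal{Q})$: the defining inequalities $x \geq 0$ and $\langle x, (c_i, -1)\rangle \geq 0$ are homogeneous, so they are preserved under scaling by the positive rational $1/d$. Since $\gamma \neq 0$, also $\gamma/d \neq 0$.

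Now I would write the decomposition
\[
\gamma = \frac{\gamma}{d} + \frac{(d-1)\gamma}{d},
\]
which expresses $\gamma$ as a sum of two non-zero integral vectors in ${\rm SC}(\mathcal{Q})$ (both are integral because $d$ divides each non-zero coordinate of $\gamma$, and both are non-zero because $d \geq 2$ and $\gamma \neq 0$). Furthermore, each summand is strictly smaller than $\gamma$ in every non-zero coordinate, so neither equals $\gamma$. By Theorem~\ref{hb-description}, this contradicts $\gamma \in \mathcal{H}$.

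I do not anticipate any substantial obstacle; the whole argument is a direct application of the Hilbert basis characterization together with the observation that dividing out a common factor of the non-zero coordinates preserves both integrality and membership in the cone. The only subtle point to articulate clearly is that we must decompose $\gamma$ into summands \emph{different} from $\gamma$ itself, which is automatic as soon as $d \geq 2$.
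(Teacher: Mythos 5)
Your proof is correct and follows essentially the same route as the paper: note that $\gamma/d$ is integral and lies in the cone, then invoke Theorem~\ref{hb-description} to force $d=1$. You simply make explicit the decomposition $\gamma = \gamma/d + (d-1)\gamma/d$ that the paper leaves implicit.
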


\begin{proof} Let $k$ be the gcd of the non-zero entries of $\gamma$.
Then, $\gamma=k\gamma'$ for some $\gamma'\in\mathbb{N}^s$. Thus, 
$\gamma'=\gamma/k$ is in ${\rm SC}(\mathcal{Q})\bigcap\mathbb{N}^s$.
Hence, by Theorem~\ref{hb-description}, $k=1$.
\end{proof}

\begin{proposition}\label{scv}
Let ${\rm SC}(\mathcal{Q})$ be the Simis cone of the covering 
polyhedron $\mathcal{Q}=\mathcal{Q}(C)$ and let
$V(\mathcal{Q})=\{\beta_1,\ldots,\beta_r\}$ be the vertex set of
$\mathcal{Q}$. The following hold.
\begin{enumerate}
\item[(a)] ${\rm SC}(\mathcal{Q})=\mathbb{R}_+\{e_1,\dots,e_s,
(\beta_1,1),\ldots,(\beta_r,1)\}$.
\item[(b)]
$\mathbb{R}_+e_1,\ldots,\mathbb{R}_+e_s,\mathbb{R}_+(\beta_1,1), 
\ldots,\mathbb{R}_+(\beta_r,1)$ are 
the extreme rays of ${\rm SC}(\mathcal{Q})$.
\item[(c)] If $\mathcal{H}$ is the Hilbert basis of ${\rm
SC}(\mathcal{Q})$, then $e_i\in\mathcal{H}$ for $i=1,\ldots,s$ and for
each $(\beta_i,1)$ there is a unique $0\neq n_i\in\mathbb{N}$ such
that $n_i(\beta_i,1)\in\mathcal{H}$.
\item[(d)] If $\beta_i$ is integral, then $(\beta_i,1)\in\mathcal{H}$.
\end{enumerate}
\end{proposition}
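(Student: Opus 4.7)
The plan is to prove the four parts in order, since each one builds on the previous.

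For part (a), I would exploit the defining inequalities of ${\rm SC}(\mathcal{Q})$: a point $(x,t)\in\mathbb{R}^{s+1}$ lies in ${\rm SC}(\mathcal{Q})$ iff $x\geq 0$, $t\geq 0$, and $\langle x,c_i\rangle\geq t$ for every $i$. The containment $\mathbb{R}_+\mathcal{B}'\subset{\rm SC}(\mathcal{Q})$ is a direct verification (both $e_i$ and $(\beta_j,1)$ satisfy all the inequalities, using $\beta_j\in\mathcal{Q}(C)$). For the reverse inclusion, split on $t$: if $t=0$ then $(x,0)$ is visibly a non-negative combination of the $e_i$; if $t>0$ then $x/t\in\mathcal{Q}(C)$, so by Proposition~\ref{cpr} one can write $x/t=v+\sum_{j}\mu_j\beta_j$ with $v\in\mathbb{R}_+^s$, $\mu_j\geq 0$, $\sum_j\mu_j=1$, whence $(x,t)=(tv,0)+\sum_j t\mu_j(\beta_j,1)\in\mathbb{R}_+\mathcal{B}'$.

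For part (b), since ${\rm SC}(\mathcal{Q})$ is a pointed cone generated by $\mathcal{B}'$, every extreme ray is spanned by an element of $\mathcal{B}'$; the different elements of $\mathcal{B}'$ lie on distinct rays because the $e_i$ have last coordinate $0$ while the $(\beta_j,1)$ have last coordinate $1$, and the $\beta_j$ are distinct. So it suffices to show each generator spans an extreme ray, i.e., if it is written as $u+w$ with $u,w\in{\rm SC}(\mathcal{Q})$ then both $u,w$ lie on that ray. For $e_i$ this is immediate: the last coordinates of $u,w$ must vanish, so both are non-negative combinations of $e_1,\dots,e_s$ summing to $e_i$, forcing them to be multiples of $e_i$. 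For $(\beta_j,1)$, write $u=(a,s_1),w=(b,s_2)$ with $s_1+s_2=1$, $a+b=\beta_j$. If $s_1,s_2>0$, then $a/s_1, b/s_2\in\mathcal{Q}(C)$, and $\beta_j=s_1(a/s_1)+s_2(b/s_2)$ is a convex combination; since $\beta_j$ is an extreme point of $\mathcal{Q}(C)$, both terms equal $\beta_j$, so $u=s_1(\beta_j,1)$ and $w=s_2(\beta_j,1)$. If $s_1=0$ (the other case is symmetric), then $a\in\mathbb{R}_+^s$ and $b\in\mathcal{Q}(C)$; the main obstacle here is to show $a=0$. I would argue: if $a\neq 0$, then $\beta_j+a\in\mathcal{Q}(C)$ (recession direction) and $b=\beta_j-a\in\mathcal{Q}(C)$ are distinct points with $\beta_j=\tfrac{1}{2}(b+(\beta_j+a))$, contradicting that $\beta_j$ is a vertex of $\mathcal{Q}(C)$.

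For part (c), the vectors $e_i$ are in $\mathcal{H}$ because any decomposition $e_i=u+w$ into non-zero integer points in the positive orthant forces $u,w\in\mathbb{N}e_i$, so one of them is $0$, a contradiction; hence $e_i$ is indecomposable and lies in the Hilbert basis by Theorem~\ref{hb-description}. For the rays $\mathbb{R}_+(\beta_i,1)$, let $n_i$ be the least positive integer with $n_i\beta_i\in\mathbb{Z}^s$ (which exists because $\beta_i\in\mathbb{Q}^s$). The integer points of this ray are exactly $\{kn_i(\beta_i,1):k\in\mathbb{N}\}$ by minimality of $n_i$, so combining with part (b) and the primitivity argument, $n_i(\beta_i,1)$ is the unique primitive integer point on this extreme ray. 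To conclude $n_i(\beta_i,1)\in\mathcal{H}$, suppose it decomposes as $u+w$ with non-zero integer vectors in ${\rm SC}(\mathcal{Q})$; by part (b) both $u,w$ are non-negative multiples of $(\beta_i,1)$, and reading off the last coordinates gives $u=\lambda_1(\beta_i,1)$, $w=\lambda_2(\beta_i,1)$ with $\lambda_1,\lambda_2\in\mathbb{N}_+$ and $\lambda_1+\lambda_2=n_i$. But then $\lambda_1\beta_i\in\mathbb{Z}^s$ with $0<\lambda_1<n_i$ contradicts minimality. Uniqueness of $n_i$ follows the same way: if $k(\beta_i,1)\in\mathcal{H}$ with $k>n_i$ and $k\beta_i\in\mathbb{Z}^s$, then $k(\beta_i,1)=n_i(\beta_i,1)+(k-n_i)(\beta_i,1)$ is a decomposition into non-zero integer vectors of ${\rm SC}(\mathcal{Q})$.

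Part (d) is immediate: if $\beta_i\in\mathbb{Z}^s$ then $n_i=1$, so $(\beta_i,1)\in\mathcal{H}$ by (c). The main obstacle throughout is the case analysis in part (b) when one decomposition summand lies in the recession cone $\mathbb{R}_+^s\times\{0\}$, which is resolved by using the extreme-point characterization of vertices of $\mathcal{Q}(C)$.
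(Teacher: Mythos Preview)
Your proof is correct. The difference from the paper lies mainly in part~(b): the paper shows each $\mathbb{R}_+e_i$ and $\mathbb{R}_+(\beta_j,1)$ is an extreme ray by exhibiting it as an intersection of supporting hyperplanes of ${\rm SC}(\mathcal{Q})$, invoking the basic-feasible-solution characterization of a vertex $\beta_j$ (there exist $s$ linearly independent constraints $\langle y,e_{j_k}\rangle=0$, $\langle y,c_{j_k}\rangle=1$ active at $\beta_j$, and these lift to supporting hyperplanes of the Simis cone cutting out the ray). You instead use the decomposition criterion for extreme rays and reduce directly to the extreme-point property of $\beta_j$ in $\mathcal{Q}(C)$, handling the boundary case $s_1=0$ via the recession cone $\mathbb{R}_+^s$. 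Your route is more elementary and avoids the linear-programming machinery; the paper's route makes the face structure explicit and ties it visibly to the defining half-spaces. In part~(c) the paper first locates a Hilbert-basis element $\gamma$ on the ray (using ${\rm SC}(\mathcal{Q})=\mathbb{R}_+\mathcal{H}$) and then reads off $n_i=\gamma_{s+1}$, whereas you construct $n_i$ directly as the least positive integer clearing denominators in $\beta_i$ and verify indecomposability; both arguments are equivalent. One small point you left implicit in the uniqueness step: any $k$ with $k(\beta_i,1)\in\mathcal{H}\subset\mathbb{Z}^{s+1}$ must satisfy $n_i\mid k$ by minimality of $n_i$, which is why it suffices to treat $k>n_i$.
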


\begin{proof} (a): Let $\mathcal{B}'$ be the set
$\{e_1,\ldots,e_s,(\beta_1,1),\ldots,(\beta_r,1)\}$ and 
let $c_1,\ldots,c_m$ be the column vectors of $C$. Given a vector
$x=(x_1,\ldots,x_{s+1})\in\mathbb{R}_+^{s+1}$. Consider the following conditions
\begin{enumerate} 
\item[(i)] $x\in{\rm SC}(\mathcal{Q})$, that is, $\langle x,(c_i,-1)\rangle\geq 0$ for all $i$.
\item[(ii)] $x\in\mathbb{R}_+\mathcal{B}'$. 
\item[(iii)] $x_{s+1}>0$ and
$x_{s+1}^{-1}(x_1,\ldots,x_s)\in\mathcal{Q}$.
\item[(iv)] $x_{s+1}>0$ and
$\langle x_{s+1}^{-1}(x_1,\ldots,x_s),c_i\rangle\geq 1$ for all $i$.
\end{enumerate}
\quad Note that all conditions are equivalent if $x_{s+1}>0$. Indeed,
(i), (iii) and (iv) are clearly equivalent and, from the equality
$\mathcal{Q}=\mathbb{R}_+^s+{\rm conv}(\beta_1,\ldots,\beta_r)$ of
Proposition~\ref{cpr}, it
follows  that (ii) and (iii) are equivalent. If $x_{s+1}=0$, then the
vector $x$
is in both ${\rm SC}(\mathcal{Q})$ and $\mathbb{R}_+\mathcal{B}'$.
Thus, conditions (i) and (ii) are equivalent, that is, ${\rm SC}(\mathcal{Q})$ is
equal to $\mathbb{R}_+\mathcal{B}'$.

(b): Next we show that $\mathcal{B}'$ is a set of representatives for the
set of all extreme rays of the Simis cone of $\mathcal{Q}$. 
As ${\rm SC}(\mathcal{Q})=\mathbb{R}_+\mathcal{B}'$, by
\cite[Proposition~1.1.23]{monalg-rev}, any extreme ray of
$\mathbb{R}_+\mathcal{B}'$ is either equal to $\mathbb{R}_+e_i$ for
some $1\leq i\leq s$ or is equal to $\mathbb{R}_+(\beta_j,1)$ for
some $1\leq j\leq r$. 
The cone generated by $e_i$, $i=1,\ldots,s$, is an extreme ray of
${\rm SC}(\mathcal{Q})$ because 
$$\mathbb{R}_+e_i=H_{e_1}\textstyle\bigcap\cdots\bigcap H_{e_{i-1}}\bigcap
H_{e_{i+1}}\bigcap\cdots\bigcap H_{e_{s+1}}\bigcap{\rm
SC}(\mathcal{Q}),
$$
where $H_{e_1},\ldots,H_{e_{s+1}}$ are support hyperplanes of ${\rm
SC}(\mathcal{Q})$ and $H_{e_{k}}\bigcap{\rm
SC}(\mathcal{Q})$ is a face of ${\rm
SC}(\mathcal{Q})$ for $1\leq k\leq s+1$. 
Let $\beta$ be a vertex of $\mathcal{Q}$. By
\cite[Corollary~1.1.47]{monalg-rev}, there are $s$ linearly independent 
vectors $e_{j_1},\ldots,e_{j_\ell},c_{j_{\ell+1}},\ldots,c_{j_s}$ such
that $\beta$ is the unique solution of the linear system 
$$
\langle y,e_{j_k} \rangle=0,\,k=1,\ldots,\ell,\,\ \langle y,c_{j_k}
\rangle=1,\, k=\ell+1,\ldots,s. 
$$
\quad Therefore, using the equivalence of (i) and (iii) when
$x_{s+1}>0$, we obtain
$$\mathbb{R}_+(\beta,1)=H_{e_{j_1}}\textstyle\bigcap\cdots\bigcap
H_{e_{j_\ell}}\bigcap
H_{(c_{j_{\ell+1}},-1)}\bigcap\cdots\bigcap H_{(c_{j_s},-1)}\bigcap{\rm
SC}(\mathcal{Q}),
$$
and consequently $\mathbb{R}_+(\beta,1)$ is an intersection of faces
of ${\rm SC}(\mathcal{Q})$. Hence, $\mathbb{R}_+(\beta,1)$ is a face
of ${\rm SC}(\mathcal{Q})$, that is,
$\mathbb{R}_+(\beta,1)$ is an extreme ray of ${\rm SC}(\mathcal{Q})$.

(c): By Theorem~\ref{hb-description}, $e_i\in\mathcal{H}$ for
$i=1,\ldots,s$. By part (b), $\mathbb{R}_+(\beta_i,1)$ is a face of dimension
$1$ of ${\rm SC}(\mathcal{Q})$. Then, using the equality ${\rm
SC}(\mathcal{Q})=\mathbb{R}_+(\mathcal{H})$ and
\cite[Proposition~1.1.23]{monalg-rev}, we obtain 
that $\mathbb{R}_+(\beta_i,1)=\mathbb{R}_+\gamma$ for some
$\gamma=(\gamma_1,\ldots,\gamma_{s+1})$ in $\mathcal{H}$. Then
\begin{align*}
&(\beta_i,1)=\lambda_i(\gamma_1,\ldots,\gamma_{s+1}),\
\lambda_i\in\mathbb{Q}_+\ \therefore\\
&\ \ \ \ \ 1=\lambda_i\gamma_{s+1} \mbox{ and
}(\beta_i,1)=\gamma_{s+1}^{-1}(\gamma_1,\ldots,\gamma_{s+1}).
\end{align*}
\quad Thus, making $n_i=\gamma_{s+1}$, we obtain
$n_i(\beta_i,1)=\gamma\in\mathcal{H}$. To show that $n_i$ is unique
assume that there is $0\neq n_i'\in\mathbb{N}$ such that
$n_i'(\beta_i,1)\in\mathcal{H}$. We may assume $n_i\geq n_i'$. Then
$$
n_i(\beta_i,1)=n_i'(\beta_i,1)+(n_i-n_i')(\beta_i,1).
$$
\quad Hence, by Theorem~\ref{hb-description}, we get $n_i=n_i'$
because $(n_i-n_i')(\beta_i,1)$ is an integral vector in ${\rm
SC}(\mathcal{Q})$.

(d): Assume that $\beta_i$ is integral and let $\gamma$ be as in the
proof of part (c). Then $\gamma=\gamma_{s+1}(\beta_i,1)$ and, by
Theorem~\ref{hb-description}, we obtain that $\gamma_{s+1}=1$ because
$(\beta_1,1)$ is an integral vector in the Simis cone ${\rm SC}(\mathcal{Q})$. Thus,
$(\beta_1,1)\in\mathcal{H}$.
\end{proof}

\begin{theorem}\label{hilbert-basis-filtration}
Let $\mathcal{R}(\mathcal{F})$ be the Rees algebra of the filtration 
$\mathcal{F}=\{I_n\}_{n=0}^{\infty}$ associated to a covering
polyhedron $\mathcal{Q}=\mathcal{Q}(C)$ and let $\mathcal{H}$ be the
Hilbert basis of ${\rm SC}(\mathcal{Q})$. The following hold. 
\begin{enumerate}
\item[(a)] If $K[\mathbb{N}{\mathcal H}]$ is the semigroup 
ring of $\mathbb{N}{\mathcal H}$, then
$\mathcal{R}(\mathcal{F})=K[\mathbb{N}{\mathcal H}]$.
\item[(b)] $\mathcal{R}(\mathcal{F})$ is a Noetherian normal finitely 
generated $K$-algebra.
\item[(c)] There exists an integer $p\geq 1$ such that $(I_p)^n=I_{np}$
for all $n\geq 1$. 
\end{enumerate}
\end{theorem}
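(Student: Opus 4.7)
The plan is to identify the Rees algebra $\mathcal{R}(\mathcal{F})$ with the semigroup ring of the lattice points of the Simis cone; then (a) is immediate from the definitions, (b) follows from general principles on semigroup rings of saturated affine semigroups, and (c) follows from a standard Veronese-subring theorem for finitely generated graded algebras.

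For (a), I would observe that the $K$-vector space basis of $\mathcal{R}(\mathcal{F})$ consists of the monomials $t^{a}z^{n}$ with $t^{a}\in I_{n}$. For $n\geq 1$, Eq.~\eqref{sequence-def} gives $t^{a}\in I_{n}$ if and only if $a/n\in\mathcal{Q}(C)$, equivalently $\langle (a,n),(c_{i},-1)\rangle\geq 0$ for all $i$, which together with $(a,n)\geq 0$ means $(a,n)\in{\rm SC}(\mathcal{Q})\cap\mathbb{Z}^{s+1}$; the case $n=0$ is also captured since $\mathbb{N}^{s}\times\{0\}\subset{\rm SC}(\mathcal{Q})$. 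By Eq.~\eqref{hb-eq} this intersection equals $\mathbb{N}\mathcal{H}$, and since monomial multiplication matches addition of exponents, the identification $\mathcal{R}(\mathcal{F})=K[\mathbb{N}\mathcal{H}]$ holds as $K$-algebras.

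For (b), finite generation is immediate from (a) together with the finiteness of $\mathcal{H}$, so $\mathcal{R}(\mathcal{F})$ is a finitely generated $K$-algebra and hence Noetherian by the Hilbert basis theorem. For normality, I would invoke Hochster's theorem: the semigroup $\mathbb{N}\mathcal{H}={\rm SC}(\mathcal{Q})\cap\mathbb{Z}^{s+1}$ is saturated by construction, since any integer point lying in $\mathbb{R}_{+}\mathcal{H}={\rm SC}(\mathcal{Q})$ already lies in $\mathbb{N}\mathcal{H}$ by Eq.~\eqref{hb-eq}; and the semigroup ring of a saturated affine semigroup is a normal domain.

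For (c), I would view $\mathcal{R}(\mathcal{F})$ as a positively $\mathbb{Z}$-graded finitely generated $S$-algebra with $n$-th piece $I_{n}z^{n}$ and apply the classical result (see e.g.\ Bourbaki, \emph{Commutative Algebra}, Ch.~III) that any such algebra has a Veronese subring $\mathcal{R}(\mathcal{F})^{(p)}=\bigoplus_{n\geq 0}I_{np}z^{np}$ which is generated in degree one over $S$ for some $p\geq 1$; this translates at once to $(I_{p})^{n}=I_{np}$ for all $n\geq 1$, the other inclusion being part of Lemma~\ref{filtration}. A natural candidate is $p$ a sufficiently divisible common multiple of the $z$-coordinates of the elements of $\mathcal{H}$ (which include the integers $n_{j}$ from Proposition~\ref{scv}(c)), so that $(p\beta_{j},p)\in\mathbb{N}\mathcal{H}$ for every vertex $\beta_{j}$ of $\mathcal{Q}$. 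The main obstacle is the combinatorial content of Veronese generation, namely showing that every lattice point of ${\rm SC}(\mathcal{Q})$ with last coordinate $np$ decomposes as a sum of $n$ lattice points with last coordinate $p$; this is precisely what finite generation of $\mathcal{R}(\mathcal{F})$ encodes, but extracting the correct $p$ requires care about how the additional Hilbert basis elements (beyond the $e_{i}$'s and the $n_{j}(\beta_{j},1)$'s) interact.
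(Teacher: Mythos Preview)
Your proposal is correct and follows essentially the same route as the paper. Part (a) is identical. For (b) the paper computes $\overline{K[F]}$ directly via \cite[Theorem~9.1.1]{monalg-rev} and checks $\mathbb{Z}\mathcal{H}\cap\mathbb{R}_{+}\mathcal{H}=\mathbb{N}\mathcal{H}$, which is exactly the ``saturated $\Rightarrow$ normal'' argument you give (though that result is not usually called Hochster's theorem; Hochster's theorem concerns the Cohen--Macaulay property). For (c) the paper simply cites \cite[Proposition~2.1]{SchenzelFiltrations}, which is precisely the Veronese generation result you attribute to Bourbaki; your closing paragraph about a candidate explicit $p$ and the ``main obstacle'' is unnecessary, since the abstract statement already gives what is needed and no explicit $p$ is required.
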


\begin{proof} (a): Recall that $K[\mathbb{N}{\mathcal H}]= 
K[\{t^az^n\vert\, (a,n)\in\mathbb{N}{\mathcal H}\}]$. Let $c_1,\ldots,c_m$ be the column vectors of $C$.
 To show the
inclusion $\mathcal{R}(\mathcal{F})\subset K[\mathbb{N}{\mathcal H}]$ take 
$t^az^n\in I_nz^n$, that is, $t^a\in I_n$. Thus,
$a/n\in\mathcal{Q}(C)$, that is, $\langle a/n,c_i\rangle\geq 1$ for
all $i$. Hence, $\langle(a,n),(c_i-1)\rangle\geq 0$ for all $i$, that
is, $(a,n)\in {\rm SC}(\mathcal{Q})$ and, since $(a,n)$ is an integral
vector, we get $(a,n)\in\mathbb{N}\mathcal{H}$. Thus, $t^az^n\in
K[\mathbb{N}\mathcal{H}]$. To show the inclusion
$\mathcal{R}(\mathcal{F})\supset K[\mathbb{N}{\mathcal H}]$ take 
$t^az^n$ with $(a,n)\in\mathbb{N}{\mathcal H}$. Then $(a,n)$ is 
in $\mathbb{R}_+\mathcal{H}={\rm SC}({\mathcal Q})$, and consequently
$\langle (a,n),(c_i,-1)\rangle\geq 0$ for all $i$. Hence,  
$\langle a/n,c_i\rangle\geq 1$ for all $i$, that is,
$a/n\in\mathcal{Q}$. Thus, $t^az^n\in I_nz^n$, 
and $t^az^n$ is in $\mathcal{R}(\mathcal{F})$.

(b): The semigroup ring $K[\mathbb{N}\mathcal{H}]$ is generated, as a
$K$-algebra, by the finite set $F$ of all $t^az^n$ with
$(a,n)\in\mathcal{H}$. Hence $\mathcal{R}(\mathcal{F})$ is Noetherian
by the equality of part (a). Using \cite[Theorem~9.1.1]{monalg-rev},
we get that the integral closure of $K[F]$ is given by
$$
\overline{K[F]}=K[\{t^az^n\vert\,
(a,n)\in\mathbb{Z}\mathcal{H}\textstyle\bigcap\mathbb{R}_+\mathcal{H}\}].
$$
\quad Using Eq.~\eqref{hb-eq}, we obtain
$\mathbb{N}\mathcal{H}=\mathbb{Z}^{s+1}\textstyle\bigcap\mathbb{R}_+\mathcal{H}
\supset\mathbb{Z}\mathcal{H}\textstyle\bigcap\mathbb{R}_+\mathcal{H}
\supset\mathbb{N}\mathcal{H}$ and one has equality everywhere.
Therefore
$$
\overline{K[\mathbb{N}\mathcal{H}]}=\overline{K[F]}=K[\{t^az^n\vert\,
(a,n)\in\mathbb{Z}^{s+1}\textstyle\bigcap\mathbb{R}_+\mathcal{H}\}]=K[\mathbb{N}\mathcal{H}].
$$
\quad Thus, $K[\mathbb{N}\mathcal{H}]$ is normal and, by part (a),
$\mathcal{R}(\mathcal{F})$ is normal.

(c): By part (b), the Rees algebra $\mathcal{R}(\mathcal{F})$ of the filtration
$\mathcal{F}$ is Noetherian. Then, this part follows directly from 
\cite[p.~818, Proposition~2.1]{SchenzelFiltrations}.
\end{proof}

\begin{proposition}\label{jun13-21} Let $\mathcal{Q}=\mathcal{Q}(C)$ be a covering
polyhedron and let $\mathcal{F}=\{I_n\}_{n=0}^\infty$ be its
associated filtration. Then there exists an integer $k\geq 1$ such that 
$\widehat{\alpha}(\mathcal{F})={\alpha_\mathcal{F}(nk)}/{nk}$ for all $n\geq 1$.
\end{proposition}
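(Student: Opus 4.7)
The plan is to leverage Theorem~\ref{schrijver-number-lp} to pin down a rational vertex of $\mathcal{Q}=\mathcal{Q}(C)$ that realizes the Waldschmidt constant, and then use a common denominator of its coordinates as the desired $k$.

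First I would invoke Theorem~\ref{schrijver-number-lp} to obtain a rational vertex $\beta=(\beta_1,\ldots,\beta_s)$ of $\mathcal{Q}(C)$ such that $|\beta|=\beta_1+\cdots+\beta_s=\widehat{\alpha}(\mathcal{F})$. Since $\beta$ has rational non-negative entries, there exists a positive integer $k$ with $k\beta\in\mathbb{N}^s$. I claim this $k$ works.

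For any $n\geq 1$, the vector $nk\beta$ lies in $\mathbb{N}^s$, and $(nk\beta)/(nk)=\beta\in\mathcal{Q}(C)$, so by the definition of the filtration in Eq.~\eqref{sequence-def} we have $t^{nk\beta}\in I_{nk}$. Consequently
$$
\alpha_\mathcal{F}(nk)\leq \deg(t^{nk\beta})=nk|\beta|=nk\,\widehat{\alpha}(\mathcal{F}),
$$
which gives $\alpha_\mathcal{F}(nk)/(nk)\leq \widehat{\alpha}(\mathcal{F})$. On the other hand, by Lemma~\ref{subadditive-g}, $\widehat{\alpha}(\mathcal{F})$ is the infimum of $\alpha_\mathcal{F}(p)/p$ over $p\in\mathbb{N}_+$, so $\widehat{\alpha}(\mathcal{F})\leq \alpha_\mathcal{F}(nk)/(nk)$. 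Combining the two inequalities yields the required equality $\widehat{\alpha}(\mathcal{F})=\alpha_\mathcal{F}(nk)/(nk)$ for every $n\geq 1$.

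There is no serious obstacle here: once the optimality-at-a-vertex statement of Theorem~\ref{schrijver-number-lp} is available, the argument is essentially a clearing-of-denominators combined with the infimum characterization of $\widehat{\alpha}(\mathcal{F})$ from Lemma~\ref{subadditive-g}. The only point to be careful about is that the chosen $\beta$ is a \emph{vertex} (so it genuinely lies in $\mathcal{Q}(C)$ and its coordinates are rational), which is precisely the content furnished by Theorem~\ref{schrijver-number-lp}.
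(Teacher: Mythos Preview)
Your proof is correct and takes a genuinely different route from the paper's. The paper invokes Theorem~\ref{hilbert-basis-filtration}(c), which supplies an integer $k$ with $(I_k)^n=I_{nk}$ for all $n\geq 1$; from this structural equality it follows that $\alpha_\mathcal{F}(nk)=n\,\alpha_\mathcal{F}(k)$, so $\alpha_\mathcal{F}(nk)/(nk)$ is a constant subsequence, hence equal to the limit $\widehat{\alpha}(\mathcal{F})$. Your argument avoids the Noetherianity of the Rees algebra altogether: you read off the optimal vertex $\beta$ from Theorem~\ref{schrijver-number-lp}, take $k$ to clear denominators, and then produce an explicit witness $t^{nk\beta}\in I_{nk}$ of degree $nk\,\widehat{\alpha}(\mathcal{F})$; combined with the infimum characterization from Lemma~\ref{subadditive-g} this pins down $\alpha_\mathcal{F}(nk)/(nk)$ exactly. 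Your approach is more elementary and yields a concrete description of $k$ (any common denominator of the coordinates of an optimal vertex), while the paper's approach gives a $k$ that carries the stronger structural information $(I_k)^n=I_{nk}$, which may be useful elsewhere.
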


\begin{proof} By Theorem~\ref{hilbert-basis-filtration}(c), 
there is $k\geq 1$ such that $(I_k)^n=I_{nk}$
for all $n\geq 1$. Then 
$$
\alpha_\mathcal{F}(nk)=\min\{\deg(t^a)\vert\, t^a\in I_{nk}\}=
\min\{\deg(t^a)\vert\, t^a\in (I_k)^n\}=n \alpha_\mathcal{F}(k),
$$
and $\alpha_\mathcal{F}(nk)/nk=\alpha_\mathcal{F}(k)/k$ for all $n\geq 1$.
As $\{\alpha_\mathcal{F}(nk)/nk\}_{n=1}^\infty$ is a subsequence of
$\{\alpha_\mathcal{F}(n)/n\}_{n=1}^\infty$,
taking limits in the last equality 
gives $\widehat{\alpha}(\mathcal{F})=\alpha_\mathcal{F}(nk)/nk=\alpha_\mathcal{F}(k)/k$
for all $n\geq 1$. 
\end{proof}

\section{Rees algebras of filtrations: resurgence
comparison}\label{section-comparison}

The next result was proved in \cite[Lemma~4.1]{Francisco-TAMS} for the filtration
of symbolic powers. The proof of \cite[Lemma~4.1]{Francisco-TAMS}
works for any strict Noetherian filtration. A filtration
is \textit{Noetherian} if its Rees algebra is Noetherian.

\begin{lemma}\label{lemma4.1} Let $\mathcal{F}=\{I_n\}_{n=0}^\infty$ be a strict
Noetherian filtration of ideals of
$S$, and let $\{m_n\}$ and $\{r_n\}$ be sequences of positive integers such 
that
$\lim_{n\rightarrow\infty}m_n=\lim_{n\rightarrow\infty}r_n=\infty$,
$I_{m_n}\subset I_1^{r_n}$ for all $n\geq 1$, and
$\lim_{n\rightarrow\infty}m_n/r_n=h$ for some $h\in\mathbb{R}$. Then
$\widehat{\rho}(\mathcal{F})\leq h$. 
\end{lemma}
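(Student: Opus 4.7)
The plan is to argue by contradiction. Suppose $\widehat{\rho}(\mathcal{F})>h$. By the definition of $\widehat{\rho}(\mathcal{F})$, there exist positive integers $m,r$ with $m/r>h$ and a threshold $T$ such that $I_{mt}\not\subset I_1^{rt}$ for every $t\geq T$. The goal is to exhibit some $t\geq T$ for which $I_{mt}\subset I_1^{rt}$, producing a contradiction.

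The central tool extracted from Noetherianity is exactly the one used in the proof of Theorem~\ref{hilbert-basis-filtration}(c): applying \cite[p.~818, Proposition~2.1]{SchenzelFiltrations} to the Noetherian algebra $\mathcal{R}(\mathcal{F})$, there exists $k\geq 1$ such that $I_{jk}=I_k^{\,j}$ for every $j\geq 1$. Aside from this equality, the argument relies only on the decreasing property $I_{a+1}\subset I_a$ and the filtration property $I_aI_b\subset I_{a+b}$.

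Next I would pick a convenient index $n$. Since $m/r>h=\lim m_n/r_n$ and $r_n\to\infty$, the quantity $mr_n-rm_n=r_n(m-r\cdot m_n/r_n)$ tends to $+\infty$, because $m-rh>0$. Choose $n$ large enough that $mr_n-rm_n>rk$, and set $m_n':=k\lceil m_n/k\rceil$. The bounds $m_n\leq m_n'<m_n+k$ give $I_{m_n'}\subset I_{m_n}\subset I_1^{r_n}$ by monotonicity, and also $rm_n'<rm_n+rk<mr_n$. Because $k\mid m_n'$, the Noetherian identity yields $I_{p\,m_n'}=(I_{m_n'})^p\subset I_1^{p\,r_n}$ for every $p\geq 1$.

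Finally, pick $p$ with $t:=pr_n\geq T$. Then $mt=pmr_n>prm_n'$, so the decreasing property of $\mathcal{F}$ together with the displayed inclusions yields
\[
I_{mt}\ \subset\ I_{prm_n'}\ =\ (I_{m_n'})^{pr}\ \subset\ I_1^{prr_n}\ =\ I_1^{rt},
\]
contradicting the choice of $T$. I expect the only non-routine step to be the adjustment $m_n\rightsquigarrow m_n'$, which is needed so that the Noetherian identification $I_{jk}=I_k^{j}$ can be invoked; its legitimacy depends on the gap $mr_n-rm_n$ eventually exceeding $rk$, which is precisely where the hypotheses $m/r>h$ and $r_n\to\infty$ enter.
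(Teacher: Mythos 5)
Your proof is correct, and it supplies exactly the kind of self-contained argument that the paper elides by deferring to \cite[Lemma~4.1]{Francisco-TAMS}. The structure — argue by contradiction, extract $k$ with $I_{jk}=I_k^j$ from \cite[p.~818, Proposition~2.1]{SchenzelFiltrations}, round $m_n$ up to $m_n'=k\lceil m_n/k\rceil$ (a bounded perturbation absorbed by the divergence of $mr_n-rm_n$), and then take $t=pr_n$ large enough to cross the threshold $T$ — is the same strategy as the cited source, merely specialized to the convenient choice $t=pr_n$ so that the exponent $j=pr$ matches exactly ($jr_n=rt$). One cosmetic remark: the hypothesis that $\mathcal{F}$ is \emph{strict} is never used in your argument, which is fine; it is present in the statement because the lemma is always applied to strict filtrations elsewhere in the paper, not because strictness is needed here.
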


\begin{proof} It follows from the proof
\cite[Lemma~4.1]{Francisco-TAMS}.
\end{proof} 

\begin{proposition}\label{comparison-ic} Let $\mathcal{F}=\{I_n\}_{n=0}^\infty$ be a strict
Noetherian filtration of monomial ideals of
$S$ such that $I_n$ is complete for all $n\geq 1$. 
The following hold.
\begin{enumerate}
\item[(a)] $\emptyset\neq\{m/r
\mid 
I_{mt}\not\subset \overline{I_1^{rt}} \text{ for all } t\gg
0\}\subset\left\{m/r
\mid 
I_{mt}\not\subset{I_1^{rt}} \text{ for all } t\gg
0\right\}$. 
\item[(b)] There exists $p\geq 1$ such that
$(I_p)^\ell=I_{p\ell}$ for all $\ell\geq 1$ and
$\widehat{\rho}_{ic}(\mathcal{F})\leq
\widehat{\rho}(\mathcal{F})\leq p$. 
\item[(c)]
$\widehat{\rho}_{ic}(\mathcal{F})=\widehat{\rho}(\mathcal{F})$.
\end{enumerate}
\end{proposition}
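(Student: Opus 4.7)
My plan is to treat the three parts in turn, using Lemma~\ref{lemma4.1} together with the Briançon--Skoda containment $\overline{I_1^N}\subset I_1^{N-s+1}$ valid for every ideal and every $N\geq s$ in the regular polynomial ring $S=K[t_1,\ldots,t_s]$ of dimension $s$.

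For (a), one direction is immediate from $I_1^{rt}\subset\overline{I_1^{rt}}$: the condition $I_{mt}\not\subset\overline{I_1^{rt}}$ implies $I_{mt}\not\subset I_1^{rt}$. For non-emptiness I invoke the strictness of $\mathcal{F}$. If $m<r$ then $mt<rt$ for every $t\geq 1$, so $I_{mt}\supsetneq I_{rt}$; combined with $\overline{I_1^{rt}}\subset I_{rt}$ (which follows from $I_1^{rt}\subset I_{rt}$ and the completeness of $I_{rt}$, by Lemma~\ref{filtration}(a)), this gives $I_{mt}\supsetneq\overline{I_1^{rt}}$, hence $I_{mt}\not\subset\overline{I_1^{rt}}$. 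In particular $1/2$ lies in the set.

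For (b), since $\mathcal{R}(\mathcal{F})$ is Noetherian, \cite[p.~818, Proposition~2.1]{SchenzelFiltrations} provides $p\geq 1$ with $(I_p)^\ell=I_{p\ell}$ for all $\ell\geq 1$. For any $m,r\in\mathbb{N}_+$ with $m/r>p$ one has $m\geq pr+1$ and so
\[
I_{mt}\ \subset\ I_{prt}\ =\ (I_p)^{rt}\ \subset\ I_1^{rt}
\]
(using that the filtration is decreasing and that $I_p\subset I_1$). This shows $m/r$ is not in the resurgence set, whence $\widehat{\rho}(\mathcal{F})\leq p$. The inequality $\widehat{\rho}_{ic}(\mathcal{F})\leq\widehat{\rho}(\mathcal{F})$ follows at once from the set inclusion of~(a), by taking suprema.

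For (c), the remaining inequality $\widehat{\rho}(\mathcal{F})\leq\widehat{\rho}_{ic}(\mathcal{F})$ I argue by contradiction. Suppose $\widehat{\rho}_{ic}(\mathcal{F})<\widehat{\rho}(\mathcal{F})$ and pick a rational $m/r$ strictly between them. Since $m/r$ exceeds the supremum $\widehat{\rho}_{ic}(\mathcal{F})$, the condition ``$I_{mt}\not\subset\overline{I_1^{rt}}$ for all $t\gg 0$'' fails, so there is an infinite sequence $t_1<t_2<\cdots$ with $I_{mt_n}\subset\overline{I_1^{rt_n}}$. For $n$ large enough that $rt_n\geq s$, Briançon--Skoda yields $\overline{I_1^{rt_n}}\subset I_1^{rt_n-s+1}$, and so $I_{mt_n}\subset I_1^{rt_n-s+1}$. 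Setting $m_n=mt_n$ and $r_n=rt_n-s+1$ produces positive integer sequences with $m_n,r_n\to\infty$ and $m_n/r_n\to m/r$; Lemma~\ref{lemma4.1} then forces $\widehat{\rho}(\mathcal{F})\leq m/r$, contradicting the choice of $m/r$.

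The principal obstacle is part~(c), where one must turn the subsequence-style failure built into the definition of $\widehat{\rho}_{ic}$ into sequence data of the form required by Lemma~\ref{lemma4.1}; the Briançon--Skoda containment is exactly the bridge from $\overline{I_1^{rt}}$ to the honest power $I_1^{rt-s+1}$ that makes this passage possible.
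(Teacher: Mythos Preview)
Your proof is correct and follows essentially the same route as the paper. Parts (a) and (b) match the paper's arguments almost verbatim. In part (c) you use the Brian\c{c}on--Skoda theorem to obtain $\overline{I_1^{rt_n}}\subset I_1^{rt_n-s+1}$, whereas the paper invokes \cite[Theorem~7.58]{bookthree}, which gives an integer $k$ with $\overline{I_1^n}=I_1^{n-k}\overline{I_1^k}\subset I_1^{n-k}$ for $n\geq k$; either containment feeds into Lemma~\ref{lemma4.1} in the same way, so the two proofs are interchangeable. One small remark: your citation of Lemma~\ref{filtration}(a) in part (a) is unnecessary, since the inclusion $\overline{I_1^{rt}}\subset I_{rt}$ follows directly from the standing hypotheses that $I_1^{rt}\subset I_{rt}$ (filtration property) and $I_{rt}=\overline{I_{rt}}$ (completeness).
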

\begin{proof} (a): The
inclusion is clear because $I_1^{rt}\subset\overline{I_1^{rt}}$. 
To show that the left hand side of the inclusion is
not empty take $t\geq 1$ and pick two positive integers $m,r$ such
that $m<r$. It suffices to show that $I_{mt}\not\subset
\overline{I_1^{rt}}$. By contradiction assume that $I_{mt}\subset
\overline{I_1^{rt}}$. Then
$$
I_1^{rt}\subset I_{rt}\subset I_{mt}\subset
\overline{I_1^{rt}}.
$$
\quad Hence, taking integral closures and using that $I_n$ is
complete for all $n\geq 1$, we get $I_{rt}=I_{mt}$, a
contradiction since $\mathcal{F}$ is a strict filtration.

(b): As the filtration
$\mathcal{F}$ is Noetherian, by \cite[p.~818,
Proposition~2.1]{SchenzelFiltrations}, there is an integer $p\geq 1$ such that
$(I_p)^\ell=I_{p\ell}$ for all $\ell\geq 1$. The inequality 
$\widehat{\rho}_{ic}(\mathcal{F})\leq\widehat{\rho}(\mathcal{F})$ is
clear by part (a). To show the inequality
$\widehat{\rho}(\mathcal{F})\leq p$, let $m,r$ be positive integers
such that $I_{mt}\not\subset I_1^{rt}$ for all $t\gg 0$. It suffices
to show $m/r\leq p$. By contradiction assume that $m>rp$. Then
$mt>rpt$ and consequently
$$ 
I_{mt}\subset I_{rpt}=(I_p)^{rt}\subset I_1^{rt}\mbox { for all }t\gg 0,
$$
a contradiction. Thus, $m/r\leq p$ and
$\widehat{\rho}(\mathcal{F})\leq p$. 

(c): By part (b) one has the inequality $\widehat{\rho}(\mathcal{F})\geq
\widehat{\rho}_{ic}(\mathcal{F})$. We proceed by contradiction. 
Assume that
$\widehat{\rho}(\mathcal{F})>\widehat{\rho}_{ic}(\mathcal{F})$. Pick 
$m/r$, $m,r\in\mathbb{N}_+$, such that 
$\widehat{\rho}(\mathcal{F})>m/r>\widehat{\rho}_{ic}(\mathcal{F})$. By
the inequality on the right, there is an increasing
sequence   
$\{t_i\}_{i=1}^\infty$ such that $\lim_{i\to \infty}t_i=\infty$ and
$I_{mt_i}\subset\overline{I_1^{rt_i}}$ for all $i\in \N$. 
By \cite[Theorem~7.58]{bookthree}, there exists $k\geq 1$ such that
$\overline{I_1^n}=I_1^{n-k}\overline{I_1^k}$ for all $n\geq k$. 
Hence, $\overline{I_1^{rt_i}}=I_1^{rt_i-k}\overline{I_1^k}$ for all
$rt_i\geq k$, and thus
$$
I_{mt_i}\subset\overline{I_1^{rt_i}}\subset I_1^{rt_i-k}.
$$
\quad Now set $m_i=mt_i$ and
$r_i=rt_i-k$. We have   
\[
\lim_{i\to\infty}{m_i}/{r_i}=\lim_{i\to\infty}{mt_i}/{(rt_i-k)}={m}/{r}.
\]
\quad Therefore, by Lemma~\ref{lemma4.1}, we get
$\widehat{\rho}(\mathcal{F})\leq m/r$, a contradiction.
\end{proof}

\begin{lemma}\label{lem:a} 
If the Rees algebra $\mathcal{R}(\mathcal{F})$ of a filtration 
$\mathcal{F}=\{I_n\}_{n=0}^{\infty}$ is a finitely generated
$S$-algebra, then there exist positive integers $p$ and $k$ such that
$I_n= I_{k}(I_p)^{n/p-k/p}=I_kI_{n-k}$ for all $n\geq k$ that satisfy
$n\equiv k\!\pmod p$. Furthermore, $I_n\subset (I_p)^{\lfloor (n-k)/p\rfloor}$
for all $n\geq k$.
\end{lemma}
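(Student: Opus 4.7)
The plan is to exploit two consequences of the hypothesis that $\mathcal{R}(\mathcal{F})$ is a finitely generated $S$-algebra: first, a single Veronese index $p$ in which the algebra becomes standard graded; and second, a module-finiteness statement in each residue class modulo $p$.

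First, I would invoke \cite[p.~818, Proposition~2.1]{SchenzelFiltrations}, the same result used in Theorem~\ref{hilbert-basis-filtration}(c) and Proposition~\ref{comparison-ic}(b): there is a positive integer $p$ such that $(I_p)^\ell = I_{p\ell}$ for every $\ell \geq 1$. This identifies the $p$-th Veronese subalgebra $\mathcal{R}(\mathcal{F})^{(p)} := \bigoplus_{n \geq 0} I_{np} z^{np}$ with the ordinary Rees algebra $\mathcal{R}(I_p) = S[I_p z^p]$. Then I would use the standard fact that a finitely generated graded algebra is finite as a module over a sufficiently high Veronese subring, so $\mathcal{R}(\mathcal{F})$ is a finitely generated $\mathcal{R}(I_p)$-module. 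Decomposing by the residue of the $z$-degree modulo $p$, each graded piece
\[
M_j := \bigoplus_{n \geq 0,\ n \equiv j \pmod{p}} I_n z^n \qquad (0 \leq j < p)
\]
is a finitely generated graded $\mathcal{R}(I_p)$-module.

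Next I would fix a residue class for which $M_j \neq 0$ (for instance $j = 0$, where $I_0 = S$ forces $M_j \neq 0$), pick $d_1 < \cdots < d_r$ as the degrees of a finite homogeneous generating set of $M_j$ (each $d_i \equiv j \pmod p$), and set $k := d_r$. For any $n \geq k$ with $n \equiv k \pmod p$, the generating property yields
\[
I_n \subset \sum_{i=1}^{r} I_{d_i}\, (I_p)^{(n-d_i)/p}.
\]
Writing $(n-d_i)/p = (k-d_i)/p + (n-k)/p$ and using $(I_p)^{(k-d_i)/p} = I_{k-d_i}$ together with the filtration property $I_{d_i} I_{k-d_i} \subset I_k$ from Lemma~\ref{filtration}(a), each summand collapses into $I_k\, (I_p)^{(n-k)/p}$. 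The reverse inclusion $I_k\, (I_p)^{(n-k)/p} = I_k I_{n-k} \subset I_n$ is just the filtration property. This yields the desired equality $I_n = I_k (I_p)^{(n-k)/p} = I_k I_{n-k}$.

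For the final assertion, given an arbitrary $n \geq k$, write $n = k + qp + r'$ with $0 \leq r' < p$, so $q = \lfloor (n-k)/p \rfloor$. Since $\mathcal{F}$ is decreasing, $I_n \subset I_{k+qp}$, and by the case already proved $I_{k+qp} = I_k\, (I_p)^q \subset (I_p)^q$, giving $I_n \subset (I_p)^{\lfloor (n-k)/p\rfloor}$. The main subtle point is ensuring a single $k$ works for a whole residue class rather than leaving an irreducible sum over the generating degrees $d_1,\ldots,d_r$: this is resolved by the ``telescoping'' step $I_{d_i}(I_p)^{(k-d_i)/p} \subset I_k$, which absorbs all lower-degree generators into the top-degree one $d_r = k$.
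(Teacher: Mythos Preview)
Your proof is correct and follows essentially the same approach as the paper: invoke Schenzel's result to obtain $p$ with $(I_p)^\ell=I_{p\ell}$, use module-finiteness of $\mathcal{R}(\mathcal{F})$ over the $p$-th Veronese, then telescope the lower-degree generators into the top one via $I_{d_i}(I_p)^{(k-d_i)/p}=I_{d_i}I_{k-d_i}\subset I_k$, and finally deduce the floor inclusion by rounding $n$ down to the nearest element of the residue class of $k$. The only cosmetic difference is that you decompose $\mathcal{R}(\mathcal{F})$ explicitly into the residue-class modules $M_j$ before choosing generators, whereas the paper picks module generators $t^{b_i}z^{n_i}$ globally and then (implicitly) restricts to those $n_i$ congruent to $n$ modulo $p$; your formulation is arguably cleaner on this point. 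One minor remark: your citation of Lemma~\ref{filtration}(a) for the inclusion $I_{d_i}I_{k-d_i}\subset I_k$ is misplaced, since that lemma is specific to filtrations of covering polyhedra while Lemma~\ref{lem:a} concerns an arbitrary filtration---but the inclusion you need is simply part of the definition of a filtration, so no gap arises.
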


\begin{proof} The Rees algebra $\mathcal{R}(\mathcal{F})$ of
$\mathcal{F}$ is Noetherian because $\mathcal{R}(\mathcal{F})$ is
finitely generated as $S$-algebra and $S$ is Noetherian. Then, by \cite[p.~818,
Proposition~2.1]{SchenzelFiltrations}, there is an integer $p\geq 1$ such that
$(I_p)^\ell=I_{p\ell}$
for all $\ell\geq 1$. Let $\cR^{(p)}(\mathcal{F})$ be the $p$-th Veronese subring 
of $\cR(\mathcal{F})$. Then 
\begin{equation}\label{jun29-21}
\cR^{(p)}(\mathcal{F}):=\textstyle\bigoplus_{\ell\geq
0}I_{p\ell}z^{p\ell}=\bigoplus_{\ell\geq 0}(I_{p})^\ell z^{p\ell}.
\end{equation}
\quad The extension
$\cR^{(p)}(\mathcal{F})\subset\cR(\mathcal{F})$ is integral. 
To show this assertion take $fz^\ell\in I_\ell z^\ell$ and note that
$(fz^\ell)^p\in (I_\ell)^p z^{p\ell}\subset I_{p\ell}z^{p\ell}$. Then, 
 $\cR(\mathcal{F})$ is a finitely generated
module over $\cR^{(p)}(\mathcal{F})$. Thus, using
Eq.~\eqref{jun29-21}, it is seen that there are
$t^{b_1}z^{n_1},\ldots,t^{b_r}z^{n_r}$ in $\mathcal{R}(\mathcal{F})$ such that
\begin{equation}\label{jun29-21-1}
\cR(\mathcal{F})=\cR^{(p)}(\mathcal{F})t^{b_1}z^{n_1}+\cdots+
\cR^{(p)}(\mathcal{F})t^{b_r}z^{n_r},
\end{equation}
where $n_1\leq\cdots\leq n_r$. Therefore, using
Eq.~\eqref{jun29-21-1}, for $n\geq n_r$ one has
$$
I_nz^n=((I_p)^{\ell_1}z^{p\ell_1})(t^{b_1}z^{n_1})+\cdots+((I_p)^{\ell_r}z^{p\ell_r})(t^{b_r}z^{n_r}),
$$
where $n=\ell_1p+n_1=\cdots=\ell_rp+n_r$ and $\ell_1\geq\cdots\geq
\ell_r$. Therefore
\begin{align}\label{feb19-21}
I_n\subset(I_p)^{\ell_1}t^{b_1}+\cdots+(I_p)^{\ell_r}t^{b_r}\subset
(I_p)^{\ell_1}I_{n_1}+\cdots+(I_p)^{\ell_r}I_{n_r}\subset I_n.
\end{align}
\quad Using that $\mathcal{F}$ is a filtration and the equality
$p(\ell_i-\ell_r)+n_i=n_r$, we obtain
\begin{equation}\label{jun29-21-3}
(I_p)^{\ell_i}I_{n_i}=(I_p)^{\ell_r}((I_p)^{\ell_i-\ell_r}I_{n_i})\subset
(I_p)^{\ell_r}(I_{p(\ell_i-\ell_r)+n_i})=(I_p)^{\ell_r}I_{n_r}
 \mbox{ for all } 1\leq i<r.
\end{equation}
\quad Hence, setting $k=n_r$ and using Eqs.~\eqref{feb19-21} 
and (\ref{jun29-21-3}), we obtain 
\begin{equation}\label{jun29-21-2}
I_n=(I_p)^{\ell_r}I_{k}=(I_p)^{n/p-k/p}I_k=I_{n-k}I_k
\end{equation}
for all $n\geq k$ that satisfy $n\equiv k\!\pmod p$. 
Next we show the inclusion 
$I_n\subset (I_p)^{\lfloor (n-k)/p\rfloor}$ for all $n\geq k$.  We can
write $n-k=\lambda p+r$, $\lambda,r\in\mathbb{N}$ and $r<p$. Thus 
$(n-r)-k=\lambda p$ and $n-r\geq k$. Then, by
Eq.~(\refeq{jun29-21-2}) and the equality $I_{\lambda
p}=(I_p)^\lambda$, one has 
$$
I_n\subset I_{n-r}=I_{n-r-k}I_k\subset I_{n-r-k}=I_{\lambda
p}=(I_p)^\lambda.
$$
\quad Hence the inclusion follows by noticing that $\lambda=\lfloor
(n-k)/p\rfloor$.
\end{proof}

\begin{corollary}\label{coro-lem:a}
Let $\mathcal{R}(\mathcal{F})$ be the Rees algebra of the filtration 
$\mathcal{F}=\{I_n\}_{n=0}^{\infty}$ of a covering
polyhedron $\mathcal{Q}(C)$. Then there exist positive integers $p$ and $k$ such that
$I_n= I_{k}(I_p)^{n/p-k/p}=I_kI_{n-k}$ for all $n\geq k$ that satisfy
$n\equiv k\!\pmod p$ and $I_n\subset (I_p)^{\lfloor (n-k)/p\rfloor}$
for all $n\geq k$.
\end{corollary}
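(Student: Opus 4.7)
The corollary is essentially a direct application of Lemma~\ref{lem:a} to the specific setting of a filtration coming from a covering polyhedron, so the plan is to verify that the hypothesis of Lemma~\ref{lem:a} is met in this setting.

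The plan is first to invoke Theorem~\ref{hilbert-basis-filtration}(b), which states that for the filtration $\mathcal{F}=\{I_n\}_{n=0}^{\infty}$ associated to a covering polyhedron $\mathcal{Q}(C)$, the Rees algebra $\mathcal{R}(\mathcal{F})$ is a Noetherian, normal, finitely generated $K$-algebra. In fact, by part (a) of that theorem, $\mathcal{R}(\mathcal{F})=K[\mathbb{N}\mathcal{H}]$, where $\mathcal{H}$ is the Hilbert basis of the Simis cone ${\rm SC}(\mathcal{Q})$, which is a finite set by Eq.~\eqref{hb-eq}.

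Next I would observe that the polynomial ring $S=K[t_1,\ldots,t_s]$ embeds in $\mathcal{R}(\mathcal{F})$ as the degree-zero part (and moreover, $e_1,\ldots,e_s\in\mathcal{H}$ by Proposition~\ref{scv}(c), so $t_1,\ldots,t_s$ occur among the monomial generators). Since $\mathcal{R}(\mathcal{F})$ is finitely generated as a $K$-algebra and contains $S$, it is automatically finitely generated as an $S$-algebra — in fact, the finite set of monomials $\{t^az^n : (a,n)\in\mathcal{H}\}$ generates $\mathcal{R}(\mathcal{F})$ over $S$.

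With this observation the hypothesis of Lemma~\ref{lem:a} is satisfied, so the conclusions of that lemma apply verbatim: there exist positive integers $p$ and $k$ such that $I_n=I_k(I_p)^{n/p-k/p}=I_kI_{n-k}$ for all $n\geq k$ with $n\equiv k\!\pmod p$, and $I_n\subset (I_p)^{\lfloor (n-k)/p\rfloor}$ for all $n\geq k$. There is no real obstacle here: the only content beyond Lemma~\ref{lem:a} is the finite generation of $\mathcal{R}(\mathcal{F})$ as an $S$-algebra, which is essentially immediate from the Hilbert basis description already established in Theorem~\ref{hilbert-basis-filtration}.
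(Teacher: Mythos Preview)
Your proof is correct and follows essentially the same approach as the paper, which simply cites Theorem~\ref{hilbert-basis-filtration}(b) and Lemma~\ref{lem:a}. Your additional remark that finite generation as a $K$-algebra implies finite generation as an $S$-algebra (since $S$ sits in degree zero) makes explicit the one small step the paper leaves implicit.
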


\begin{proof} It follows directly from
Theorem~\ref{hilbert-basis-filtration}(b) and Lemma~\ref{lem:a}. 
\end{proof}

\begin{proposition}
Let $\mathcal{F}=\{I_n\}_{n=0}^\infty$, $\mathcal{F}'=\{J_n\}_{n=0}^\infty$ be filtrations
of ideals of $S$. Assume $\mathcal{F}$ and $\mathcal{F}'$ have finitely generated Rees
algebras and let $p$ be an integer such that $\cR(\mathcal{F})$ is a finitely
generated module over $\cR^{(p)}(\mathcal{F})$. Consider the additional filtration
$\mathcal{F}''=\{(J_p)^n\}_{n=0}^\infty$. Then     
\[\widehat{\rho}(\mathcal{F},\mathcal{F}'')=p\widehat{\rho}(\mathcal{F},\mathcal{F}').\]
\end{proposition}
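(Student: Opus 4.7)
The plan is to prove the two inequalities $\widehat{\rho}(\mathcal{F},\mathcal{F}'')\ge p\widehat{\rho}(\mathcal{F},\mathcal{F}')$ and $\widehat{\rho}(\mathcal{F},\mathcal{F}'')\le p\widehat{\rho}(\mathcal{F},\mathcal{F}')$ separately by direct manipulation of the raw sets defining the asymptotic resurgences. The key structural input is the inclusion $(J_p)^n\subset J_{pn}$ for every $n$, obtained by iterating the filtration property $J_aJ_b\subset J_{a+b}$ of $\mathcal{F}'$.

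For $\widehat{\rho}(\mathcal{F},\mathcal{F}'')\ge p\widehat{\rho}(\mathcal{F},\mathcal{F}')$, I take a pair of positive integers $(m,r)$ satisfying $I_{mt}\not\subset J_{rt}$ for all $t\gg 0$ and show that the scaled pair $(pm,r)$ satisfies $I_{pmt'}\not\subset(J_p)^{rt'}$ for all $t'\gg 0$. Arguing by contradiction, if $I_{pmt'}\subset(J_p)^{rt'}$ for infinitely many $t'$, then chaining with $(J_p)^{rt'}\subset J_{prt'}$ yields $I_{m(pt')}\subset J_{r(pt')}$ for those $t'$; since $pt'$ is eventually arbitrarily large, this contradicts the hypothesis on $(m,r)$. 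Hence $pm/r$ belongs to the raw set for $\widehat{\rho}(\mathcal{F},\mathcal{F}'')$, and supremum over admissible $(m,r)$ gives the inequality.

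The reverse inequality is the main obstacle, since one must extract a non-containment involving $J_n$ from the weaker information $I_{mt}\not\subset(J_p)^{rt}$; this requires the converse inclusion to the trivial $(J_p)^n\subset J_{pn}$. I would secure the identity $(J_p)^n=J_{pn}$ by applying Lemma~\ref{lem:a} to the Noetherian filtration $\mathcal{F}'$, which yields some exponent $p_0$ with $(J_{p_0})^n=J_{p_0n}$ for all $n\ge 1$. Since the hypothesis that $\cR(\mathcal{F})$ is a finitely generated module over $\cR^{(p)}(\mathcal{F})$ persists when $p$ is replaced by any larger multiple (as $\cR^{(kp)}(\mathcal{F})\subset\cR^{(p)}(\mathcal{F})$ gives an intermediate integral extension), and the claimed equality rescales consistently under such replacement, we may assume $p$ is a multiple of $p_0$; then $(J_p)^n=\bigl((J_{p_0})^{p/p_0}\bigr)^n=J_{pn}$.

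With the identification $(J_p)^{rt}=J_{prt}$ in hand, the condition $I_{mt}\not\subset(J_p)^{rt}$ becomes $I_{mt}\not\subset J_{prt}$, so the pair $(m,pr)$ lies in the raw set for $\widehat{\rho}(\mathcal{F},\mathcal{F}')$. Consequently $m/(pr)\le\widehat{\rho}(\mathcal{F},\mathcal{F}')$, whence $m/r\le p\widehat{\rho}(\mathcal{F},\mathcal{F}')$, and taking the supremum over all $(m,r)$ in the raw set for $\widehat{\rho}(\mathcal{F},\mathcal{F}'')$ produces the reverse inequality and hence the claimed equality.
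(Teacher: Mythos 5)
Your first inequality, $\widehat{\rho}(\mathcal{F},\mathcal{F}'')\ge p\,\widehat{\rho}(\mathcal{F},\mathcal{F}')$, is correct and follows essentially the same route as the paper, relying only on the trivial inclusion $(J_p)^n\subset J_{pn}$ (you phrase it with the defining set, the paper with its complement, but that is cosmetic).

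The second inequality is where the gap lies. You correctly identify that your argument needs the nontrivial inclusion $J_{pn}\subset (J_p)^n$ (hence the equality $(J_p)^n=J_{pn}$), but neither the stated hypothesis nor Lemma~\ref{lem:a} delivers it for the given $p$: Lemma~\ref{lem:a} produces \emph{some} $p_0$ with $(J_{p_0})^\ell=J_{p_0\ell}$, not the given $p$, and the hypothesis that $\cR(\mathcal{F})$ is module-finite over $\cR^{(p)}(\mathcal{F})$ does not convert into power-compatibility of $\{J_n\}$. Your proposed fix, ``we may assume $p$ is a multiple of $p_0$,'' does not go through: replacing $p$ by a multiple $q=kp$ changes both the filtration $\mathcal{F}''=\{(J_p)^n\}$ to $\{(J_q)^n\}$ and the scaling factor from $p$ to $q$, and there is no clean passage from the statement for $q$ back to the statement for $p$ unless $(J_p)^k=J_{kp}=J_q$ -- which is precisely the equality you lack. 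The paper sidesteps this entirely: it uses only the one-sided, approximate converse $J_n\subset (J_p)^{\lfloor (n-k)/p\rfloor}$ for $n\ge k$ furnished by Lemma~\ref{lem:a} applied to $\mathcal{F}'$, replaces exact ratios by the sequences $m_i=mt_i$, $r_i=\lfloor (rt_i-k)/p\rfloor$, and then invokes Lemma~\ref{lemma4.1} (the filtration version of \cite[Lemma~4.1]{Francisco-TAMS}) together with a squeeze-theorem limit to recover the bound $\widehat{\rho}(\mathcal{F},\mathcal{F}'')\le mp/r$. You should replace your exact-equality step with this approximate inclusion and the limiting argument; as written, the reverse inequality is not established.
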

\begin{proof}
Suppose $m,r\in\N$ are such that
${mp}/{r}>\widehat{\rho}(\mathcal{F},\mathcal{F}'')$. Then $I_{mpt}\subset 
(J_p)^{rt}$ for $t\gg 0$ and since $ (J_p)^{rt}\subset J_{prt}$ we have
$I_{mpt}\subset J_{prt}$ for $t\gg 0$. This shows that      
\[ 
\widehat{\rho}(\mathcal{F},\mathcal{F}')\leq
\inf\{{m}/{r}\mid 
{mp}/{r}>\widehat{\rho}(\mathcal{F},\mathcal{F}'')\}
={\widehat{\rho}(\mathcal{F},\mathcal{F}'')}/{p}.
\]
\quad For the opposite inequality consider $m,r\in \N$ such that
${m}/{r}>\widehat{\rho}(\mathcal{F},\mathcal{F}')$. Then there is an increasing
sequence   
$\{t_i\}$ such that $\lim_{i\to \infty}t_i=\infty$ and
$I_{mt_i}\subset J_{rt_i}$ for all $i\in \N$. 
By Lemma~\ref{lem:a} we have $J_{rt_i}\subset (J_p)^{\lfloor
{(rt_i-k)}/{p} \rfloor}$ for $rt_i>k$, and thus $I_{mt_i}\subset
(J_p)^{\lfloor
{(rt_i-k)}/{p} \rfloor}$ for all $i\in \N$. Now set $m_i=mt_i$ and
$r_i=\lfloor{(rt_i-k)}/{p} \rfloor$. We have   
\[
{(rt_i-k)}/{p}  \leq r_i \leq  ({(rt_i-k)}/{p}) +1,\ \mbox{ and }
\]
\[
{mp}/{(r-({k}/{t_i}))}= {mt_ip}/{(rt_i-k)}  \geq
{m_i}/{r_i} \geq
{mt_ip}/{(rt_i-k+p)}={mp}/{(r+({(p-k)}/{t_i}))}.  
\]
\quad By the squeeze theorem it follows that
$\lim_{i\to\infty}{m_i}/{r_i}={mp}/{r}$. By \cite[Lemma
4.1]{Francisco-TAMS} (cf. Lemma~\ref{lemma4.1}) we have   
$\widehat{\rho}(\mathcal{F},\mathcal{F}'')\leq {mp}/{r}$. We have thus shown that 
\[
\widehat{\rho}(\mathcal{F},\mathcal{F}'')\leq 
p\inf\{{m}/{r}\mid
{m}/{r}>\widehat{\rho}(\mathcal{F},\mathcal{F}')\}
=p\widehat{\rho}(\mathcal{F},\mathcal{F}'),  
\]
which finishes the proof.
\end{proof}

\section{Computing the ic-resurgence with linear
programming}\label{section-computing}
\quad The main result of this section shows that the ic-resurgence
of a strict filtration associated to a covering polyhedron can be computed using 
linear programming and gives an algorithm to compute this number.

The following notation will be used throughout this section. 
Let $\mathcal{Q}(C)$ be a covering polyhedron, let $c_1,\ldots,c_m$ be
the columns of $C$, $c_i\in\mathbb{Q}_+^s$ for all $i$, let $\mathcal{F}=\{I_n\}_{n=0}^\infty$ be the
filtration associated to $\mathcal{Q}(C)$, let ${\rm NP}(I_1)$ be
the Newton polyhedron of $I_1$, let $B$ be a rational matrix with
non-negative entries and non-zero columns such that ${\rm NP}(I_1)=\mathcal{Q}(B)$, 
let $\beta_1,\ldots,\beta_k$ be the columns of $B$, and let $n_i$ be a
positive integer such that $n_i\beta_i$ is integral for
$i=1,\ldots,k$. The existence of $B$ follows from Proposition~\ref{np-qa}.

Computing $\rho_{ic}(\mathcal{F})$ is an integer linear-fractional programming
problem essentially because the Newton polyhedron and the covering polyhedron
are defined by rational systems of linear inequalities. Indeed, a monomial
$t^a$ is in $I_n\setminus\overline{I_1^r}$ if and only if
$a/n\in\mathcal{Q}(C)$ and $a/r\notin{\rm NP}(I_1)$
(Proposition~\ref{np-qa}), that is,  $t^a$ is in
$I_n\setminus\overline{I_1^r}$  if and only if 
\begin{equation}\label{mar14-20}
\langle a,c_i\rangle\geq n\mbox{ for }i=1,\ldots,m\mbox{ and }\langle
a,n_j\beta_j\rangle\leq rn_j-1\mbox{ for some }1\leq j\leq k.
\end{equation}
\quad Let $x_1,\ldots,x_s$ be variables that correspond to
the entries of $a$ and let $x_{s+1},x_{s+2}$ be two extra variables
that correspond to $n$ and $r$, respectively. Hence, by
Eq.~\eqref{mar14-20}, for each $1\leq j\leq k$ one can associate the
following integer linear-fractional 
program:
\begin{align}
&\text{maximize }\ \ h_j(x)=\frac{x_{s+1}}{x_{s+2}}&&&&\nonumber\\
&\text{subject to }\ \langle(x_1,\ldots,x_s),c_i\rangle-x_{s+1}\geq
0,\ i=1,\ldots,m,\, x_{s+1}\geq 1 &&&&\label{integer-lfp-ic-resurgence}\\
&\quad \quad\quad\quad\quad
(x_1,\ldots,x_{s},x_{s+1},x_{s+2})\in\mathbb{N}^{s+2}&&&&\nonumber \\
&\quad \quad\quad\quad\quad  n_jx_{s+2}-\langle(x_1,\ldots,x_s),n_j\beta_j\rangle
\geq 1,\, x_{s+2}\geq 1.&&&&\nonumber 
\end{align}
\quad Note that if $\tau_j$ is the
optimal value of this program, we obtain
$$
\rho_{ic}(\mathcal{F})=\sup\left.\left\{{n}/{r}\, \right|I_n\not\subset\overline{I_1^r}\right\}=
\max\{\tau_j\}_{j=1}^k.
$$

\begin{lemma}\label{lemma-non-empty}
If $\mathcal{F}$ is a strict filtration, then $I_1\not\subset\overline{I_1^r}$ for
some $r\geq 2$.  
\end{lemma}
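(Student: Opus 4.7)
The plan is to prove something stronger, namely that $I_1\not\subset \overline{I_1^r}$ for \emph{every} $r\geq 2$, by producing an explicit witness: a minimal generator of $I_1$ of lowest possible degree. The underlying observation is that membership of $t^a$ in $\overline{I_1^r}$ is the convex-geometric statement $a/r\in{\rm NP}(I_1)$, and this forces $|a|\geq r\,\alpha(I_1)$, which is incompatible with $|a|=\alpha(I_1)$ once $r\geq 2$.

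First I would dispose of the degenerate cases. Strictness gives $I_1\subsetneq I_0=S$, so $I_1\neq S$; it also gives $I_2\subsetneq I_1$, so $I_1\neq 0$ (otherwise $I_2\subset I_1=0$ would yield $I_2=I_1$). Consequently $I_1$ admits minimal monomial generators and $\alpha(I_1)\geq 1$. Pick any minimal generator $t^a$ of $I_1$ achieving $|a|=\alpha(I_1)$, where $|a|:=\sum_i a_i$.

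Next, fix an arbitrary $r\geq 2$ and argue by contradiction that $t^a\in\overline{I_1^r}$. By Proposition~\ref{np-qa}(a) this is equivalent to $a/r\in{\rm NP}(I_1)$. Writing $I_1=(t^{v_1},\ldots,t^{v_q})$ for its minimal generators, Eq.~\eqref{NP-def} gives
\[
a/r=\epsilon+\textstyle\sum_{i=1}^{q}\lambda_i v_i,\qquad \epsilon\in\mathbb{R}_+^s,\ \lambda_i\geq 0,\ \sum_i\lambda_i=1.
\]
Applying $|\cdot|$ and using $|v_i|\geq\alpha(I_1)$ for each $i$ together with $|\epsilon|\geq 0$, I obtain
\[
|a|/r=|\epsilon|+\textstyle\sum_i\lambda_i|v_i|\geq \alpha(I_1).
\]
Thus $\alpha(I_1)=|a|\geq r\,\alpha(I_1)$, which forces $r\leq 1$ since $\alpha(I_1)\geq 1$, contradicting $r\geq 2$.

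Therefore $t^a\notin\overline{I_1^r}$ for every $r\geq 2$, whence $I_1\not\subset \overline{I_1^r}$ for every $r\geq 2$, which is considerably stronger than the stated conclusion. There is no serious obstacle here; the only points requiring a touch of care are the verification that strictness prevents $I_1=0$ (so a minimum-degree generator exists) and the correct invocation of the convex-hull description of ${\rm NP}(I_1)$ to pass from membership in $\overline{I_1^r}$ to the degree inequality.
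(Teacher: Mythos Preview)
Your proof is correct and in fact establishes the stronger statement $I_1\not\subset\overline{I_1^r}$ for \emph{every} $r\geq 2$. The degree bound you extract from the Newton polyhedron description is airtight: if $a/r\in{\rm NP}(I_1)$ then $|a|\geq r\,\alpha(I_1)$, so a minimum-degree generator of $I_1$ can never land in $\overline{I_1^r}$ once $r\geq 2$.

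The paper's argument is quite different. It proceeds by contradiction assuming $I_1\subset\overline{I_1^r}$ for all $r\geq 2$ and then invokes the structural fact (from \cite[Theorem~7.58]{bookthree}) that $\overline{I_1^r}=I_1^{r-\ell}\overline{I_1^\ell}$ for all $r\geq\ell$ and some fixed $\ell$; choosing $r-\ell=2$ gives $I_1\subset I_1^2$, which is incompatible with $I_1$ being a proper nonzero ideal. Your route avoids this black-box result entirely, using only the convex description of ${\rm NP}(I_1)$ already available in the paper (Proposition~\ref{np-qa} and Eq.~\eqref{NP-def}). The payoff is a more elementary and self-contained proof that also yields the stronger conclusion, at the cost of being specific to monomial ideals; the paper's argument, by contrast, is insensitive to the monomial hypothesis and would work for any finitely generated ideal in a Noetherian ring.
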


\begin{proof} Assume that $I_1\subset\overline{I_1^r}$ for all $r\geq 2$.
By \cite[Theorem~7.58]{bookthree}, there exists $\ell\geq 1$ such that
$\overline{I_1^r}=I_1^{r-\ell}\overline{I_1^\ell}$ for all $r\geq
\ell$. Making $r-\ell=2$, we get $I_1\subset\overline{I_1^r}\subset
I_1^2$, a contradiction. 
\end{proof}

\begin{lemma}\label{bound-ic} Let $\mathcal{Q}(C)$ be a covering
polyhedron and let $\mathcal{F}$ be its associated filtration. If
$\mathcal{F}$ is strict, then 
$\widehat{\rho}(\mathcal{F})=\widehat{\rho}_{ic}(\mathcal{F})={\rho}_{ic}(\mathcal{F})$
and this is a finite number. 
\end{lemma}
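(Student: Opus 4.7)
The plan is to deduce both equalities from results already built up in the section, with the only genuinely new ingredient being a scaling argument that compares $\rho_{ic}(\mathcal{F})$ to $\widehat{\rho}_{ic}(\mathcal{F})$ for filtrations defined polyhedrally.

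First I would verify that Proposition~\ref{comparison-ic} applies to $\mathcal{F}$. By Lemma~\ref{filtration}(a) each $I_n$ satisfies $\overline{I_n}=I_n$, and by Theorem~\ref{hilbert-basis-filtration}(b) the Rees algebra $\mathcal{R}(\mathcal{F})$ is Noetherian, so $\mathcal{F}$ is a strict Noetherian filtration of complete monomial ideals. Part (c) of Proposition~\ref{comparison-ic} immediately gives $\widehat{\rho}(\mathcal{F})=\widehat{\rho}_{ic}(\mathcal{F})$, and part (b) supplies a positive integer $p$ such that $\widehat{\rho}_{ic}(\mathcal{F})\leq \widehat{\rho}(\mathcal{F})\leq p$, establishing finiteness for both asymptotic quantities.

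Next I would prove $\rho_{ic}(\mathcal{F})=\widehat{\rho}_{ic}(\mathcal{F})$. The inequality $\widehat{\rho}_{ic}(\mathcal{F})\leq \rho_{ic}(\mathcal{F})$ is immediate from the definitions: any ratio $m/r$ in the asymptotic set yields, for some specific $t$, a failure $I_{mt}\not\subset\overline{I_1^{rt}}$, and the ratio $mt/rt=m/r$ then belongs to the set defining $\rho_{ic}(\mathcal{F})$. For the reverse, suppose $I_m\not\subset\overline{I_1^r}$ and pick a witness monomial $t^a\in I_m\setminus\overline{I_1^r}$. By Eq.~\eqref{sequence-def} and Proposition~\ref{np-qa}(a) this is equivalent to $a/m\in\mathcal{Q}(C)$ and $a/r\notin{\rm NP}(I_1)$. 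For any $t\geq 1$ the identities $(ta)/(tm)=a/m$ and $(ta)/(tr)=a/r$ show that $t^{ta}\in I_{mt}\setminus\overline{I_1^{rt}}$, so $I_{mt}\not\subset\overline{I_1^{rt}}$ holds for every $t\geq 1$ and in particular for $t\gg 0$. Hence $m/r$ lies in the set defining $\widehat{\rho}_{ic}(\mathcal{F})$, so $m/r\leq\widehat{\rho}_{ic}(\mathcal{F})$, and passing to the supremum yields $\rho_{ic}(\mathcal{F})\leq\widehat{\rho}_{ic}(\mathcal{F})$.

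Combining the two steps gives $\widehat{\rho}(\mathcal{F})=\widehat{\rho}_{ic}(\mathcal{F})=\rho_{ic}(\mathcal{F})\leq p$, which establishes both equalities and finiteness simultaneously. I do not expect a real obstacle here; the essential point is the scaling observation, which is special to filtrations of a covering polyhedron because the defining conditions $a/m\in\mathcal{Q}(C)$ and $a/r\notin{\rm NP}(I_1)$ depend only on the rational rays through $a$ and are therefore invariant under the replacement $a\mapsto ta$, $(m,r)\mapsto(tm,tr)$. The only subtle checks are to invoke Proposition~\ref{np-qa}(a) to convert integral closure of powers into a Newton polyhedron condition, and to recall that $\mathcal{Q}(C)$ is upward closed in $\mathbb{R}_+^s$ (Lemma~\ref{blocking-type}) so that membership $a/m\in\mathcal{Q}(C)$ is equivalent to $t^a\in I_m$ for every monomial of $I_m$, not only its minimal generators.
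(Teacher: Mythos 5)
Your proposal is correct and follows essentially the same route as the paper: cite Lemma~\ref{filtration}, Theorem~\ref{hilbert-basis-filtration}, and Proposition~\ref{comparison-ic} for $\widehat{\rho}(\mathcal{F})=\widehat{\rho}_{ic}(\mathcal{F})<\infty$, then prove $\widehat{\rho}_{ic}(\mathcal{F})=\rho_{ic}(\mathcal{F})$ by the same scale-invariance argument via $\mathcal{Q}(C)$ and ${\rm NP}(I_1)$.
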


\begin{proof} By Lemma~\ref{filtration},
Theorem~\ref{hilbert-basis-filtration}, 
and Proposition~\ref{comparison-ic},
$\widehat{\rho}(\mathcal{F})=\widehat{\rho}_{ic}(\mathcal{F})<\infty$.
Next we show the equality
$\widehat{\rho}_{ic}(\mathcal{F})={\rho}_{ic}(\mathcal{F})$. First we
show the inequality $\widehat{\rho}_{ic}(\mathcal{F})\leq
{\rho}_{ic}(\mathcal{F})$. Let $n/r$ be any rational number,
$n,r\in\mathbb{N}_+$, such that
$I_{n\lambda}\not\subset\overline{I_1^{r\lambda}}$ for all
$\lambda\gg 0$. Then
$n/r=n\lambda/r\lambda\leq{\rho}_{ic}(\mathcal{F})$, and 
consequently
$\widehat{\rho}_{ic}(\mathcal{F})\leq{\rho}_{ic}(\mathcal{F})$. To
show the inequality
${\rho}_{ic}(\mathcal{F})\leq\widehat{\rho}_{ic}(\mathcal{F})$, let
$n/r$ be any rational number, $n,r\in\mathbb{N}_+$, such that
$I_n\not\subset\overline{I_1^r}$. Take any integer $\lambda\geq 1$ and
pick $t^a$ in $I_n\setminus\overline{I_1^r}$. Then, $t^{{\lambda}a}$
is in $(I_n)^\lambda\subset I_{n\lambda}$. As $t^a$ is not in
$\overline{I_1^r}$, one has that $a/r$ is not in $NP(I_1)$. Then, $\langle
a/r,\beta_i\rangle<1$ for some $1\leq i\leq k$. Since 
$n/r=n\lambda/r\lambda$, we get that $t^{\lambda a}$ is not in
$\overline{I_1^{\lambda r}}$. Therefore,
$I_{n\lambda}\not\subset\overline{I_1^{r\lambda}}$ for all 
$\lambda\geq 1$. This proves that
$n/r\leq\widehat{\rho}_{ic}(\mathcal{F})$, and consequently  
${\rho}_{ic}(\mathcal{F})\leq\widehat{\rho}_{ic}(\mathcal{F})$.  
\end{proof}

The next result gives linear programs, based on
linear-fractional programming, to compute  
the ic-resurgence $\rho_{ic}(\mathcal{F})$ of $\mathcal{F}$ 
(Example~\ref{bowtie-example}, Procedure~\ref{bowtie-procedure}, Algorithm~\ref{AS-code}).

\begin{theorem}\label{lp-resurgence-formula}
Let $\mathcal{F}=\{I_n\}_{n=0}^\infty$ be the filtration of a covering
polyhedron $\mathcal{Q}(C)$. For each $1\leq j\leq k$, let $\rho_j$
be the optimal value of the 
following linear
program with variables $y_1,\ldots,y_{s+3}$. If $\mathcal{F}$ is
strict, then $\rho_{ic}(\mathcal{F})=\max\{\rho_j\}_{j=1}^k$ and $\rho_j$ is 
attained at a rational vertex of the polyhedron $\mathcal{P}_j$ of feasible points of
Eq.~\eqref{lp-ic-resurgence}. In
particular, $\rho_{ic}(\mathcal{F})$ is rational.
\begin{align}
&\text{maximize }\ \ g_j(y)=y_{s+1}&&&&\nonumber\\
&\text{subject to }\ \langle(y_1,\ldots,y_s),c_i\rangle-y_{s+1}\geq
0,\ i=1,\ldots,m,\, y_{s+1}\geq y_{s+3} &&&&\label{lp-ic-resurgence}\\
&\quad \quad\quad\quad\quad y_i\geq 0,\, i=1,\ldots,s,\, y_{s+3}\geq 0&&&&\nonumber \\
&\quad \quad\quad\quad\quad  n_jy_{s+2}-\langle(y_1,\ldots,y_s),n_j\beta_j\rangle
\geq y_{s+3},\, y_{s+2}=1.&&&&\nonumber 
\end{align}
\end{theorem}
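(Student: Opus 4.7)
The plan is to establish $\rho_j = \tau_j$ for each $j$, where $\tau_j$ denotes the supremum of $x_{s+1}/x_{s+2}$ over integer feasible points of the integer linear-fractional program in Eq.~\eqref{integer-lfp-ic-resurgence}. Combined with the identity $\rho_{ic}(\mathcal{F}) = \max_{1\leq j\leq k} \tau_j$ already extracted from Eq.~\eqref{mar14-20}, this gives $\rho_{ic}(\mathcal{F}) = \max_j \rho_j$. Since the LP has rational data and is feasible (the point $(0,\ldots,0,1,0)$ always satisfies all constraints), rationality of $\rho_{ic}(\mathcal{F})$ and attainment at a rational vertex of $\mathcal{P}_j$ are automatic from standard LP theory once boundedness is verified.

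For the easy inequality $\tau_j \leq \rho_j$, I would use a Charnes--Cooper-style homogenization. Given an integer feasible point $(x_1,\ldots,x_{s+2})$ of Eq.~\eqref{integer-lfp-ic-resurgence}, set $y_i := x_i/x_{s+2}$ for $1 \leq i \leq s+1$, $y_{s+2} := 1$, and $y_{s+3} := 1/x_{s+2}$. Direct substitution shows that $y$ satisfies every constraint of the $j$-th LP, with objective value $y_{s+1} = x_{s+1}/x_{s+2}$.

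For the reverse inequality $\rho_j \leq \tau_j$, I first argue the LP is bounded. If a recession direction $d$ of $\mathcal{P}_j$ had $d_{s+1} > 0$, then $d_{s+2} = 0$, and the homogenized constraint $\langle d_z, n_j\beta_j\rangle \leq -d_{s+3}$ together with $d_z \geq 0$, $d_{s+3} \geq 0$, and $\beta_j \geq 0$ would force $d_{s+3} = 0$ and $\langle d_z, \beta_j\rangle = 0$; after clearing denominators in $d$, the integer points $(\lambda d_z, \lambda d_{s+1}, 1)$ would be feasible for Eq.~\eqref{integer-lfp-ic-resurgence} with unbounded ratios, contradicting $\tau_j \leq \rho_{ic}(\mathcal{F}) < \infty$ from Lemma~\ref{bound-ic}. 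Hence the LP attains its optimum at a rational vertex $y^*$. If $y_{s+3}^* > 0$, choose a positive integer $N$ with $Ny_i^* \in \mathbb{N}$ for all $i$ and $Ny_{s+3}^* \geq 1$; then $(Ny_1^*,\ldots,Ny_{s+1}^*,N)$ is integer feasible for Eq.~\eqref{integer-lfp-ic-resurgence} with ratio $y_{s+1}^* = \rho_j$, giving $\tau_j \geq \rho_j$.

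The main obstacle is the boundary case $y_{s+3}^* = 0$, which I would handle by perturbation. Assuming $y_{s+1}^* > 0$ (otherwise $\rho_j = 0$ and the inequality is trivial), set $z^* := (y_1^*,\ldots,y_s^*)$ and define
\[
y^\epsilon := \bigl((1-\epsilon)z^*,\; (1-\epsilon)y_{s+1}^*,\; 1,\; \epsilon n_j\bigr)
\]
for small rational $\epsilon > 0$. The crucial constraint $n_j - (1-\epsilon)\langle z^*, n_j\beta_j\rangle \geq \epsilon n_j$ simplifies, after dividing by $n_j$, to $\langle z^*, \beta_j\rangle \leq 1$, which holds by feasibility of $y^*$ at $y_{s+3}^* = 0$; the remaining LP constraints are immediate whenever $\epsilon \leq y_{s+1}^*/(y_{s+1}^* + n_j)$. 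Since $y^\epsilon$ is rational and has $y^\epsilon_{s+3} = \epsilon n_j > 0$, the scaling argument of the preceding paragraph applies to $y^\epsilon$ to yield $\tau_j \geq (1-\epsilon)y_{s+1}^*$. Passing $\epsilon \to 0^+$ through the rationals gives $\tau_j \geq \rho_j$, completing the proof.
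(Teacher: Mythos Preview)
Your proof is correct and follows essentially the same Charnes--Cooper strategy as the paper. The only noteworthy difference is organizational: the paper introduces an intermediate \emph{continuous} linear-fractional program with value $\rho_j'$, proves $\rho_j=\rho_j'$, and then links $\rho_j'$ to $\rho_{ic}(\mathcal{F})$ by scaling rational feasible points; you bypass $\rho_j'$ and compare $\rho_j$ directly to the integer supremum $\tau_j$. For the degenerate case $y_{s+3}^*=0$, the paper uses a ray argument (moving along $x+\lambda(y_1,\ldots,y_{s+2})$ inside the LFP feasible set and letting $\lambda\to\infty$), whereas you use the convex perturbation $y^\epsilon$ and send $\epsilon\to 0$; both are standard devices and yield the same conclusion.
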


\begin{proof} The canonical vector $e_{s+2}$ in $\mathbb{R}^{s+3}$ is
a feasible point for Eq.~\eqref{lp-ic-resurgence} for any $1\leq
j\leq k$. Let $a=(a_1,\ldots,a_s)$ be a vector in
$\mathbb{N}^s\setminus\{0\}$ and let $n,r$ be positive integers.
Recall that, by Eqs.~\eqref{mar14-20} and
\eqref{integer-lfp-ic-resurgence}, 
$t^a$ is in $I_n\setminus\overline{I_1^r}$ if and only if there
exists $1\leq j\leq k$ such that $(a_1,\ldots,a_s,n,r)$ is a 
feasible point for the following
linear-fractional program  
\begin{align}
&\text{maximize }\ \ f_j(x)=\frac{x_{s+1}}{x_{s+2}}&&&&\nonumber\\
&\text{subject to }\ \langle(x_1,\ldots,x_s),c_i\rangle-x_{s+1}\geq
0,\ i=1,\ldots,m,\, x_{s+1}\geq 1 &&&&\label{lfp-ic-resurgence}\\
&\quad \quad\quad\quad\quad x_i\geq 0,\, i=1,\ldots,s&&&&\nonumber \\
&\quad \quad\quad\quad\quad  n_jx_{s+2}-\langle(x_1,\ldots,x_s),n_j\beta_j\rangle
\geq 1,\, x_{s+2}\geq 1&&&&\nonumber 
\end{align}
with variables $x_1,\ldots,x_{s},x_{s+1},x_{s+2}$. 
The polyhedron $\mathcal{Q}_j$ defined by the constraints of 
Eq.~\eqref{lfp-ic-resurgence} is not empty for any $1\leq j\leq k$.
Indeed, since the $c_i$'s are non-zero vectors in $\mathbb{Q}_+^s$ and
$n_j\geq 1$ it follows that $(b_1,\ldots,b_{s+2})$ is in
$\mathcal{Q}_j$ by choosing $b_{s+1}=1$ and $b_1,\ldots,b_s,b_{s+2}$ large enough
integers  (cf. Lemma~\ref{lemma-non-empty}).
We set $\rho_j'=\sup\{f_j(x)\vert\, x\in\mathcal{Q}_j\}$. 
Next we show that $\rho_j'$ is finite. Take any rational feasible
point $x$ for 
Eq.~\eqref{lfp-ic-resurgence} and pick a positive integer $\lambda$ such
that $\lambda x\in\mathbb{N}^{s+2}$. As $\lambda x$ is also feasible
for Eq.~\eqref{lfp-ic-resurgence}, that is, $\lambda x\in\mathcal{Q}_j$,
one has $I_{\lambda x_{s+1}}\not\subset\overline{I_1^{\lambda
x_{s+2}}}$. Therefore 
$$f_j(x)=\frac{x_{s+1}}{x_{s+2}}=f_j(\lambda x)
=\frac{\lambda x_{s+1}}{\lambda x_{s+2}}\leq\rho_{ic}(\mathcal{F}),
$$
and consequently, by Lemma~\ref{bound-ic}, $\rho_j'\leq
\rho_{ic}(\mathcal{F})<\infty$. This proves
$\max\{\rho_j'\}_{j=1}^k\leq \rho_{ic}(\mathcal{F})$. Below we show the
reverse inequality. First we prove that $\rho_j=\rho_j'$ for all $j$. 

As we now explain, the linear-fractional program of
Eq.~\eqref{lfp-ic-resurgence} is equivalent to the 
linear program of Eq.~\eqref{lp-ic-resurgence} \cite[Section~4.3.2,
p.~151]{boyd}. 
To show the equivalence, we first note that if $x$ is feasible for 
Eq.~\eqref{lfp-ic-resurgence} then the
point
$$
y=\left(\frac{x_1}{x_{s+2}},\ldots,\frac{x_{s}}{x_{s+2}},\frac{x_{s+1}}{x_{s+2}},\frac{x_{s+2}}{x_{s+2}},\frac{1}{x_{s+2}}\right)
$$
is feasible for Eq.~\eqref{lp-ic-resurgence}, with the same objective
value, that is, $f_j(x)=g_j(y)$. It follows that 
the optimal value $\rho_j$ of Eq.~\eqref{lp-ic-resurgence} is greater than or
equal to the optimal value $\rho_j'$ of Eq.~\eqref{lfp-ic-resurgence}, that is,
$\rho_j\geq\rho_j'$. To prove that $\rho_j=\rho_j'$, suppose to the
contrary that $\rho_j>\rho_j'$. Pick $y$ feasible for 
Eq.~\eqref{lp-ic-resurgence} such that $g_j(y)>\rho_j'$. If
$0<y_{s+3}\leq 1$, then the point 
$$
x=\left(\frac{y_1}{y_{s+3}},\ldots,\frac{y_{s}}{y_{s+3}},\frac{y_{s+1}}{y_{s+3}},\frac{y_{s+2}}{y_{s+3}}\right)
$$
is feasible for Eq.~\eqref{lfp-ic-resurgence}, with the same objective value, that
is, $g_j(y)=f_j(x)$. Hence $f_j(x)>\rho_j'$, a contradiction. If $y_{s+3}\geq 1$, then the point 
$$
x=\left({y_1},\ldots,{y_{s}},{y_{s+1}},{y_{s+2}}\right)
$$
is feasible for Eq.~\eqref{lfp-ic-resurgence} and $g_j(y)=f_j(x)$.
Thus $f_j(x)>\rho_j'$, a contradiction. If
$y_{s+3}=0$, we choose $x$ feasible
for Eq.~\eqref{lfp-ic-resurgence}. Then $x+\lambda(y_1,\ldots,y_{s+2})$ is feasible 
for Eq.~\eqref{lfp-ic-resurgence} for all $\lambda\geq 0$ and  
$$\lim_{\lambda\rightarrow\infty}f_j(x+\lambda(y_1,\ldots,y_{s+2})) = 
\lim_{\lambda\rightarrow\infty}\frac{x_{s+1}+\lambda
y_{s+1}}{x_{s+2}+\lambda
y_{s+2}}=\frac{y_{s+1}}{y_{s+2}}=y_{s+1}=g_j(y) ,$$ 
so we can find
feasible points for Eq.~\eqref{lfp-ic-resurgence} with objective values arbitrarily close to the objective value
$g_j(y)$ of $y$. Thus $f_j(x+\lambda(y_1,\ldots,y_{s+2}))>\rho_j'$ for
some $\lambda\gg 0$, a contradiction. This proves that $\rho_j=\rho_j'$. 
The integral feasible points for Eq.~\eqref{integer-lfp-ic-resurgence} and
Eq.~\eqref{lfp-ic-resurgence} are the same. Hence
$$
\rho_{ic}(\mathcal{F})=\sup\left.\left\{\frac{n}{r}\right|I_n\not\subset\overline{I_1^r}\right\}=
\sup\left.\left\{\frac{n}{r}\right|(a,n,r)\in\mathcal{Q}_j\textstyle\bigcap\mathbb{N}^{s+2};\,
\mbox{ for some }j\mbox{ and }a\right\},
$$
and consequently $\rho_{ic}(\mathcal{F})\leq\max\{\rho_j'\}_{j=1}^k$.
Thus $\rho_{ic}(\mathcal{F})=\max\{\rho_j'\}_{j=1}^k=\max\{\rho_j\}_{j=1}^k$.
Finally, for $1\leq j\leq k$, let $\mathcal{P}_j$ be the rational polyhedron of
feasible points for Eq.~\eqref{lp-ic-resurgence}. All vertices of
$\mathcal{P}_j$ are rational \cite[Proposition~1.1.46]{monalg-rev} and
the optimal value of the linear program of
Eq.~\eqref{lp-ic-resurgence} is 
attained at a vertex of $\mathcal{P}_j$ \cite[Proposition~1.1.41]{monalg-rev}.
\end{proof}

\section{The ic-resurgence of ideals of covers of edge
ideals}\label{section-resurgence-dual}

Let $G$ be a graph with vertex set $V(G)=\{t_1,\ldots,t_s\}$ and edge
set $E(G)$. A {\it coloring\/} of the vertices of $G$ is an assignment
of colors to the vertices of $G$ in such a way that adjacent vertices
have distinct colors. The {\it chromatic
number\/} of a graph $G$, denoted by
$\chi(G)$,
is the minimum number of colors in a 
coloring of $G$. Given $A\subset V(G)$, 
the \textit{induced subgraph} on $A$, denoted $G[A]$, is the maximal
subgraph of $G$ 
with vertex set $A$. A subgraph of the form $G[A]$ is called
an \textit{induced subgraph}. 
A {\it clique\/} of $G$
is a set of vertices inducing a complete subgraph. 
We also call a complete subgraph $\mathcal{K}_r$ of $G$ 
a clique. The \textit{clique
number} of $G$, denoted by $\omega(G)$, is the number of vertices in
a maximum clique in $G$. In general one has $ \omega(G)\leq\chi(G)$. 
A graph $G$ is called {\it perfect\/} if $\omega(H)=\chi(H)$
for every induced subgraph $H$ of $G$ \cite[p.~111]{diestel}. 
This notion was introduced by Berge \cite[Chapter
16]{berge-graphs-hypergraphs}.  

\begin{proposition}\cite[Proposition~2.2,
Theorem~2.10]{perfect}\label{myperfect-char} Let $J=I_c(G)$ be the ideal 
of covers of a graph $G$ and let ${\rm RC}(J)$ be the Rees cone of
$J$ defined in Eq.~\eqref{rees-cone-eq}. 
Then 
\begin{equation}\label{mundial06}
\mathrm{RC}(J)\subset\left\{(a_i)\in\mathbb{R}^{s+1}\vert\,
\textstyle\sum_{t_i\in\mathcal{K}_r}a_i
\geq (r-1)a_{s+1};\ \forall\,  {\mathcal K}_r\subset
G\right\}
\end{equation}
with equality if and only if $G$ is perfect. If $G$ is perfect, then $I_c(G)$ is normal. 
\end{proposition}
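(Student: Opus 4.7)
The plan is to establish the containment $\mathrm{RC}(J)\subseteq\mathcal{P}$ (where $\mathcal{P}$ denotes the right-hand side of Eq.~\eqref{mundial06}) by checking the clique inequalities on the generators of $\mathrm{RC}(J)$, then to characterize equality via the polyhedral form of Lov\'asz's perfect graph theorem, and finally to deduce normality in the perfect case via the Max-Flow Min-Cut characterization of Corollary~\ref{ntf-char}(c). For the inclusion, the cone $\mathrm{RC}(J)$ is generated by $e_1,\ldots,e_s$ together with the vectors $(\chi_C,1)$ for each minimal vertex cover $C$ of $G$. The clique inequalities hold trivially on the $e_i$ (the right-hand side is zero), and on $(\chi_C,1)$ with a clique $\mathcal{K}_r$ the inequality reduces to $|C\cap\mathcal{K}_r|\geq r-1$. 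This last inequality holds because $V(G)\setminus C$ is an independent set and therefore meets $\mathcal{K}_r$ in at most one vertex; by linearity the inequality extends to all of $\mathrm{RC}(J)$.

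For the equivalence with perfection I would slice both cones at the hyperplane $\{a_{s+1}=1\}$. From Proposition~\ref{np-qa} and the definition of the Rees cone, the left-hand slice equals the Newton polyhedron $\mathrm{NP}(J)=\mathbb{R}_+^s+{\rm conv}\{\chi_C\mid C\text{ minimal vertex cover of } G\}$, while the right-hand slice equals
\[
\mathcal{P}_1:=\Bigl\{a\in\mathbb{R}_+^s : \textstyle\sum_{t_i\in\mathcal{K}_r}a_i\geq r-1\ \forall\,\mathcal{K}_r\subset G\Bigr\}.
\]
Equality of the two cones is therefore equivalent to $\mathrm{NP}(J)=\mathcal{P}_1$. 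Under the affine substitution $b_i=1-a_i$ applied to the unit cube $[0,1]^s$, the polyhedron $\mathcal{P}_1\cap[0,1]^s$ corresponds to the fractional stable set polytope $\mathrm{QSTAB}(G)$, and the characteristic vectors of minimal vertex covers correspond to the characteristic vectors of maximal independent sets, i.e.\ vertices of the stable set polytope $\mathrm{STAB}(G)$. The polyhedral form of Lov\'asz's perfect graph theorem states $\mathrm{STAB}(G)=\mathrm{QSTAB}(G)$ if and only if $G$ is perfect, which translates back to $\mathrm{NP}(J)=\mathcal{P}_1$ if and only if $G$ is perfect.

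For the normality of $I_c(G)$ in the perfect case, the equality $\mathrm{RC}(J)=\mathcal{P}$ together with Lov\'asz's theorem exhibits the covering polyhedron $\mathcal{Q}(I_c(G))$ (obtained as a slice of $\mathrm{RC}(J)$) as an integral polyhedron. The clutter whose edges are the minimal vertex covers of a perfect graph is known to have the Max-Flow Min-Cut property via Lov\'asz--Padberg anti-blocking duality, so Corollary~\ref{ntf-char}(c) applied to the clutter with edge ideal $I_c(G)$ yields $I_c(G)^n=I_c(G)^{(n)}$ for all $n\geq 1$, and in particular $I_c(G)^n=\overline{I_c(G)^n}$. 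The main obstacle is the equivalence with perfection: it relies on invoking the polyhedral form of Lov\'asz's theorem in the correct direction and on carefully mediating between the upward-unbounded covering polyhedron $\mathcal{P}_1$ and the bounded packing polytope $\mathrm{QSTAB}(G)$ via restriction to the unit cube, with the normality conclusion then falling out from the Max-Flow Min-Cut characterization already established earlier in the paper.
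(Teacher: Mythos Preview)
The paper does not prove this proposition at all: it is quoted verbatim from \cite[Proposition~2.2, Theorem~2.10]{perfect}, so there is no in-paper argument to compare against. Your proof of the inclusion and your reduction of the equality to Lov\'asz's theorem via the affine map $b=\mathbf{1}-a$ are essentially the approach of \cite{perfect}. One point you should make explicit is why passing to the cube loses nothing: every vertex of $\mathcal{P}_1$ already lies in $[0,1]^s$ (if $v_i>1$, the subclique obtained by deleting $t_i$ shows no clique constraint through $t_i$ can be tight at $v$, so $v$ would not be extreme), and the same holds for $\mathrm{NP}(J)$; hence $\mathcal{P}_1=\mathbb{R}_+^s+(\mathcal{P}_1\cap[0,1]^s)$ and likewise for $\mathrm{NP}(J)$, so equality on the cube is equivalent to equality of the unbounded polyhedra.

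Your argument for normality, however, has a genuine error. The assertion that the clutter of minimal vertex covers of a perfect graph has the max-flow min-cut property is false: take $G=\mathcal{K}_3$. Then $I_c(G)=I(\mathcal{K}_3)=(t_1t_2,t_1t_3,t_2t_3)$, and the cover clutter is again the triangle. With weight $\alpha=(1,1,1)$ the LP in Definition~\ref{mfmc-def} has fractional optimum $3/2$, integral covering optimum $2$, and integral packing optimum $1$, so MFMC fails. Relatedly, your claim that $\mathcal{Q}(I_c(G))$ is integral is also false for $G=\mathcal{K}_3$, since $\mathcal{Q}(I(\mathcal{K}_3))$ has the non-integral vertex $(\tfrac12,\tfrac12,\tfrac12)$. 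Consequently Corollary~\ref{ntf-char} cannot be invoked, and indeed $I_c(\mathcal{K}_3)^{(2)}\neq I_c(\mathcal{K}_3)^2$ even though $I_c(\mathcal{K}_3)$ \emph{is} normal. The normality statement in \cite{perfect} is obtained differently: once $\mathrm{RC}(J)=\mathcal{P}$, one shows that the semigroup $\mathcal{P}\cap\mathbb{Z}^{s+1}$ is generated by $e_1,\ldots,e_s$ and the $(\chi_C,1)$ by using the integer decomposition property of $\mathrm{STAB}(G)$ for perfect $G$ (equivalently, that the clique-constraint system is TDI). You need to replace the MFMC step by that argument.
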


\begin{theorem}\label{jun14-21}
Let $G$ be a graph, let $I_c(G)$ be the ideal of covers of $G$, and
let $\omega(G)$ be the clique number. Then the resurgence and ic-resurgence of $I_c(G)$
satisfy
$$
\rho(I_c(G))\geq\rho_{ic}(I_c(G))\geq{2(\omega(G)-1)}/{\omega(G)}
$$
with equality everywhere if $G$ is perfect. 
\end{theorem}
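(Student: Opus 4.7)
The plan is to split the proof into three parts: the trivial inequality $\rho(I_c(G))\geq\rho_{ic}(I_c(G))$, a universal lower bound $\rho_{ic}(I_c(G))\geq 2(\omega-1)/\omega$, and a matching upper bound $\rho(I_c(G))\leq 2(\omega-1)/\omega$ when $G$ is perfect. The first inequality is immediate from $I_c(G)^r\subseteq\overline{I_c(G)^r}$, which only enlarges the denominator-ideal in the defining supremum.

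For the lower bound I would fix a maximum clique $K=\{t_1,\ldots,t_\omega\}$. The inclusion direction of Eq.~\eqref{mundial06} in Proposition~\ref{myperfect-char}, read at $a_{s+1}=1$, certifies that the inequality $\sum_{t_i\in K}x_i\geq\omega-1$ is valid on ${\rm NP}(I_c(G))$. For each integer $k\geq 1$ set $n_k=2(\omega-1)k$, $r_k=\omega k+1$, and $a_k=((\omega-1)k,\ldots,(\omega-1)k)\in\mathbb{N}^s$. Using the primary decomposition $I_c(G)=\bigcap_{\{t_i,t_j\}\in E(G)}(t_i,t_j)$ together with the characterization $t^a\in(t_i,t_j)^n\Leftrightarrow a_i+a_j\geq n$, the identity $(a_k)_i+(a_k)_j=2(\omega-1)k=n_k$ over every edge gives $t^{a_k}\in I_c(G)^{(n_k)}$. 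On the other hand, $\sum_{t_i\in K}(a_k)_i/r_k=\omega(\omega-1)k/(\omega k+1)<\omega-1$, so $a_k/r_k\notin{\rm NP}(I_c(G))$ and hence $t^{a_k}\notin\overline{I_c(G)^{r_k}}$ by Proposition~\ref{np-qa}(a). Therefore $n_k/r_k\leq\rho_{ic}(I_c(G))$ for each $k$, and letting $k\to\infty$ yields $\rho_{ic}(I_c(G))\geq 2(\omega-1)/\omega$.

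For the upper bound when $G$ is perfect, Proposition~\ref{myperfect-char} supplies normality of $I_c(G)$, so $\rho_{ic}(I_c(G))=\rho(I_c(G))$, together with equality in Eq.~\eqref{mundial06}. Slicing at $a_{s+1}=1$ gives
\begin{equation*}
{\rm NP}(I_c(G))=\left\{x\geq 0\,:\,\textstyle\sum_{t_i\in K_r}x_i\geq r-1\text{ for every clique }K_r\subseteq G\right\}.
\end{equation*}
Assume $t^a\in I_c(G)^{(n)}$, so $a_i+a_j\geq n$ for every edge $\{t_i,t_j\}$. Summing these inequalities over the $\binom{r}{2}$ pairs inside a clique $K_r$ (each vertex appearing in exactly $r-1$ pairs) yields $(r-1)\sum_{t_i\in K_r}a_i\geq\binom{r}{2}n$, hence $\sum_{t_i\in K_r}a_i\geq rn/2$. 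Since $r/(r-1)$ is decreasing in $r$, the inequality $k\leq rn/(2(r-1))$ is most restrictive at $r=\omega$; so whenever $k\leq \omega n/(2(\omega-1))$ we have $\sum_{t_i\in K_r}a_i\geq (r-1)k$ for every clique $K_r\subseteq G$, placing $a/k\in{\rm NP}(I_c(G))$ and hence $t^a\in I_c(G)^k$ by normality. Contrapositively, $I_c(G)^{(n)}\not\subseteq I_c(G)^k$ forces $n/k<2(\omega-1)/\omega$, giving $\rho(I_c(G))\leq 2(\omega-1)/\omega$.

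The crux lies in applying Proposition~\ref{myperfect-char} in both directions: its inclusion half supplies the single clique inequality that certifies the lower bound for an arbitrary $G$, while perfectness is needed to know that the full Newton polyhedron is cut out by clique inequalities, allowing the combinatorial pair-summing bound $\sum a_i\geq rn/2$ to translate directly into membership in ${\rm NP}(I_c(G))$; the monotonicity of $r/(r-1)$ then isolates the maximum clique as the active constraint.
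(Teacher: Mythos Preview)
Your proof is correct and follows essentially the same approach as the paper's: the same all-equal witness vector for the lower bound (your parameterization with $a_k=((\omega-1)k,\ldots,(\omega-1)k)$ is the paper's $a_\lambda$ at $\lambda=(\omega-1)k$, and your $r_k=\omega k+1$ coincides with the paper's $b_\lambda$ there), and the same use of Proposition~\ref{myperfect-char} plus pair-summing inside a clique for the upper bound. The only cosmetic differences are that you avoid ceilings in the lower bound, sum over all $\binom{r}{2}$ pairs of a clique rather than a Hamiltonian cycle, and phrase the upper bound as a direct containment rather than its contrapositive.
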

\begin{proof} Setting $J=I_c(G)$, one clearly has $\rho(J)\geq\rho_{ic}(J)$
and, by Proposition~\ref{myperfect-char}, equality holds if $G$ is
perfect because in this case $J$ is normal. Next we show the second inequality. 
Let $\lambda$ be any positive integer. We set
$\omega=\omega(G)$, 
$a_\lambda=\sum_{i=1}^s\lambda e_i$, and
$b_\lambda=\lceil(\lambda \omega+1)/(\omega-1)\rceil$. We claim that
$t^{a_\lambda}\in
J^{(2\lambda)}\setminus\overline{J^{b_\lambda}}$. Recall that $J$ is
the intersection of all ideals $(t_i,t_j)$ such that $\{t_i,t_j\}\in
E(G)$ and $t_1\cdots t_s$ is in $(t_i,t_j)^2$ for all $\{t_i,t_j\}\in
E(G)$. Thus, by Lemma~\ref{anoth-one-char-spow-general}, $t_1\cdots t_s$ is in $J^{(2)}$, and
consequently $t^{a_\lambda}$ is in $J^{(2\lambda)}$. According to 
\cite[Theorem~9.1.1 and p.~509]{monalg-rev}, the
integral closure of the Rees algebra ${\mathcal{R}(J)}$ of the ideal $J$ is given by 
\begin{equation}\label{apr13-21}
\overline{\mathcal{R}(J)}=S\textstyle\bigoplus
\overline{J}z\bigoplus\cdots\bigoplus \overline{J^n}z^n\textstyle\bigoplus\cdots=
K[\{t^az^b\vert\, (a,b)\in{\rm RC}(J)\textstyle\bigcap\mathbb{Z}^{s+1}\}],
\end{equation}
where the Rees cone ${\rm RC}(J)$ of $J$ is defined in Eq.~\eqref{rees-cone-eq} of
Procedure~\ref{bowtie-procedure}.  
Pick a complete subgraph $\mathcal{K}_\omega$ of $G$ with $\omega$
vertices and set
$a_{\lambda,i}=\lambda$ for $i=1,\ldots,s$. Then one has
\begin{equation*}
1+\sum_{t_i\in\mathcal{K}_\omega}a_{\lambda,i}=1+\lambda
\omega=\frac{(\omega-1)}{1}\frac{(1+\lambda \omega)}{(\omega-1)}\leq
(\omega-1)\left\lceil\frac{1+\lambda \omega}{\omega-1}\right\rceil=(\omega-1)b_{\lambda},
\end{equation*}
and consequently $\sum_{t_i\in\mathcal{K}_\omega}a_{\lambda,i}\leq
(\omega-1)b_{\lambda}-1$. If $t^{a_\lambda}$ is in
$\overline{J^{b_\lambda}}$, then
$t^{a_\lambda}z^{b_\lambda}\in\overline{\mathcal{R}(J)}$ and, by
Eq.~\eqref{apr13-21}, we get $(a_\lambda,b_\lambda)\in{\rm RC}(J)$.
Hence, by Proposition~\ref{myperfect-char}, we get 
$$
\sum_{t_i\in\mathcal{K}_\omega}a_{\lambda,i}
\geq (\omega-1)b_\lambda,
$$
a contradiction. Thus, $t^{a_\lambda}\notin \overline{J^{b_\lambda}}$
and the claim has been proven. Therefore, $\rho_{ic}(J)\geq
2\lambda/b_\lambda$ for all $\lambda\in\mathbb{N}_+$. Hence, noticing 
that $b_\lambda\leq ((\lambda\omega+1)/(\omega-1))+1$, we obtain
$$
\rho_{ic}(J)\geq\frac{2\lambda}{b_\lambda}\geq\frac{2\lambda(\omega-1)}{\lambda\omega+\omega}
\ \ \therefore\ \  
\rho_{ic}(J)\geq \lim_{\lambda\rightarrow\infty}\frac{2\lambda(\omega-1)}{\lambda\omega+\omega}=
\frac{2(\omega-1)}{\omega}.
$$
\quad Assume that $G$ is perfect. Let $a_{s+1}, a_{s+2}$ be positive
integers such that $J^{(a_{s+1})}\not\subset \overline{J^{a_{s+2}}}$
for some $a_{s+1},a_{s+2}\in\mathbb{N}_+$. As $J$ is normal, there
exists $t^a$ in $J^{(a_{s+1})}\setminus J^{a_{s+2}}$,
$a=(a_1,\ldots,a_s)$. Then, $t^az^{a_{s+2}}$ is not in 
$\mathcal{R}(J)=\overline{\mathcal{R}(J)}$, that is, $(a,a_{s+2})$ is
not in ${\rm RC}(J)$. Hence, by Proposition~\ref{myperfect-char},
there is $2\leq r\leq s$ such that 
\begin{equation}\label{apr14-21}
\sum_{t_i\in\mathcal{K}_r}a_i
\leq (r-1)a_{s+2}-1.
\end{equation}
\quad Let $A$ be the incidence matrix of $I(G)$. 
Then, by Eq.~\eqref{jun21-21-1} of Section~\ref{section-intro}, one has  
$$J^{(a_{s+1})}=(\{t^c\vert\,
c/a_{s+1}\in\mathcal{Q}(A)\})\ \ \therefore\ \ a_i+a_j\geq a_{s+1}\
\forall\, \{t_i,t_j\}\in E(G).
$$
\quad We may assume that the vertices of $\mathcal{K}_r$ are
$t_1,\ldots,t_r$. Therefore
\begin{align}\label{apr14-21-1}
\sum_{t_i\in\mathcal{K}_r}a_i&=\frac{2(a_1+\cdots+a_r)}{2}=
\frac{(a_1+a_2)+\cdots+(a_{r-1}+a_r)+(a_r+a_1)}{2}\geq
\frac{ra_{s+1}}{2}.
\end{align}
\quad As $r\leq\omega$, using Eqs.~\eqref{apr14-21} and \eqref{apr14-21-1},
we get 
$$
\frac{a_{s+1}}{a_{s+2}}\leq
\frac{2(r-1)}{r}-\frac{2}{ra_{s+2}}\leq\frac{2(r-1)}{r}\leq\frac{2(\omega-1)}{\omega}.
$$
\quad Therefore $\rho_{ic}(J)\leq{2(\omega-1)}/{\omega}$ and the proof
is complete. 
\end{proof}

Let $G$ be a graph. A set of vertices $D$ of $G$ is called a \textit{vertex
cover} if every edge of $G$ contains at least one vertex of $D$. A 
\textit{minimal vertex cover} of $G$ is a vertex cover which
is minimal with respect to inclusion.
The number of vertices
in any smallest vertex cover of $G$, denoted by $\alpha_0(G)$, 
is called the \textit{covering number} of $G$. The height of $I(G)$ is
equal to $\alpha_0(G)$.

\begin{proposition}\label{jun14-21-1}
Let $G$ be a graph and let $I(G)$ be its edge ideal. If $H$ is an induced
subgraph of $G$ with covering number $\alpha_0(H)$ and
$\rho_{ic}(I(G))$ is the ic-resurgence of $I(G)$, then  
$$
\rho_{ic}(I(G))\geq{2\alpha_0(H)}/{|V(H)|}.
$$
\end{proposition}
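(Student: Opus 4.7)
The plan is to mimic the strategy used in the proof of Theorem~\ref{jun14-21}, but with the roles of edge ideals and ideals of covers swapped. Since $H$ is an induced subgraph of $G$, I would exploit the fact that the restriction of any vertex cover of $G$ to $V(H)$ is automatically a vertex cover of $H$ (because all edges of $H$ are edges of $G$), which will give the symbolic power membership, while a simple degree-counting argument against $\overline{I(G)^r}$ will give the non-containment.

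Set $h=|V(H)|$ and $\alpha=\alpha_0(H)$. For each positive integer $\lambda$, consider the exponent vector $a_\lambda$ with $(a_\lambda)_i=\lambda$ for $t_i\in V(H)$ and $(a_\lambda)_i=0$ otherwise, i.e. the monomial $t^{a_\lambda}=\prod_{t_i\in V(H)}t_i^\lambda$. I would first show that $t^{a_\lambda}\in I(G)^{(\lambda\alpha)}$. By Lemma~\ref{anoth-one-char-spow-general}, the symbolic power $I(G)^{(\lambda\alpha)}$ equals the intersection $\bigcap_{D}\mathfrak{p}_D^{\lambda\alpha}$ over the minimal primes $\mathfrak{p}_D=(t_i:t_i\in D)$ coming from minimal vertex covers $D$ of $G$. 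Membership of $t^{a_\lambda}$ in $\mathfrak{p}_D^{\lambda\alpha}$ is equivalent to $\sum_{t_i\in D}(a_\lambda)_i=\lambda\,|D\cap V(H)|\geq \lambda\alpha$, so I need $|D\cap V(H)|\geq\alpha$ for every minimal vertex cover $D$ of $G$. Since $H$ is induced in $G$, the set $D\cap V(H)$ is a vertex cover of $H$, hence has size at least $\alpha_0(H)=\alpha$, as required.

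Next I would establish $t^{a_\lambda}\notin\overline{I(G)^{r_\lambda}}$ for $r_\lambda:=\lfloor\lambda h/2\rfloor+1$ by a degree count. Assume on the contrary that $t^{a_\lambda}\in\overline{I(G)^{r_\lambda}}$. By Lemma~\ref{icd}, there is $p\geq1$ with $t^{p\,a_\lambda}=(t^{a_\lambda})^p\in I(G)^{p\,r_\lambda}$. Every monomial in $I(G)^{p\,r_\lambda}$ has total degree at least $2p\,r_\lambda$, since $I(G)$ is generated by monomials of degree $2$. Therefore $\deg(t^{p\,a_\lambda})=p\lambda h\geq 2p\,r_\lambda$, i.e. $r_\lambda\leq\lambda h/2$, which contradicts the choice $r_\lambda=\lfloor\lambda h/2\rfloor+1>\lambda h/2$. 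Hence $I(G)^{(\lambda\alpha)}\not\subset\overline{I(G)^{r_\lambda}}$ for every $\lambda\geq1$, which by definition of $\rho_{ic}(I(G))$ yields
\[
\rho_{ic}(I(G))\;\geq\;\frac{\lambda\alpha}{r_\lambda}\;=\;\frac{\lambda\alpha}{\lfloor\lambda h/2\rfloor+1}.
\]
Passing to the limit $\lambda\to\infty$ on the right-hand side gives $\rho_{ic}(I(G))\geq 2\alpha/h=2\alpha_0(H)/|V(H)|$, completing the proof.

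There is no real obstacle here: the only delicate point is the symbolic-power membership argument, which relies on $H$ being \emph{induced} so that vertex covers of $G$ restrict to vertex covers of $H$. If $H$ were merely a subgraph, one could only bound $|D\cap V(H)|$ from below by the covering number of the induced graph on $V(H)$, which may be smaller than $\alpha_0(H)$, and the argument would fail.
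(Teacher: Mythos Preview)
Your proof is correct and follows essentially the same approach as the paper: the same witness monomial $t^{a_\lambda}=\prod_{t_i\in V(H)}t_i^\lambda$, the same symbolic-power argument via $|D\cap V(H)|\geq\alpha_0(H)$, and the same non-containment, with your $r_\lambda=\lfloor\lambda h/2\rfloor+1$ equal to the paper's $b_\lambda=\lceil(\lambda n+1)/2\rceil$. The only cosmetic difference is that you obtain non-containment by a direct degree count through Lemma~\ref{icd}, whereas the paper phrases the same inequality via the Rees cone inclusion~\eqref{helvex}; your version is slightly more elementary but equivalent.
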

\begin{proof} We may assume that the vertices of $H$ are
$t_1,\ldots,t_n$. Let $\lambda$ be any positive integer. We set 
$I=I(G)$, $\alpha_0=\alpha_0(H)$,  
$a_\lambda=\sum_{i=1}^n\lambda e_i$, and
$b_\lambda=\lceil(\lambda n+1)/2\rceil$. We claim that
$t^{a_\lambda}\in
I^{(\lambda\alpha_0)}\setminus\overline{I^{b_\lambda}}$. 
Recall that $I$ is
the intersection of all ideals $(C)$ such that $C$ is a minimal vertex
cover of $G$. Take a minimal vertex cover $C$ of $G$. Then there is
 a minimal vertex cover $C_H$ of $H$ contained in $C$. Setting 
$a=\sum_{i=1}^ne_i$, $a_i=1$ for $i=1,\ldots,n$, and $a_i=0$ for
$i>n$, one has
$$
|C\textstyle\bigcap V(H)|=\sum_{t_i\in C}a_i\geq \sum_{t_i\in
C_H}a_i=|C_H|\geq \alpha_0(H).
$$
\quad Then $t^a\in (C)^{\alpha_0}$. Thus, by
Lemma~\ref{anoth-one-char-spow-general}, 
$t^a\in I^{(\alpha_0)}$ and
consequently $t^{\lambda a}=t^{a_\lambda}\in I^{(\lambda \alpha_0)}$. The
integral closure of the Rees algebra of $I$ is given by 
\begin{equation}\label{apr16-21}
\overline{\mathcal{R}(I)}=S\textstyle\bigoplus
\overline{I}z\bigoplus\cdots\bigoplus \overline{I^n}z^n\textstyle\bigoplus\cdots=
K[\{t^az^b\vert\, (a,b)\in{\rm RC}(I)\bigcap\mathbb{Z}^{s+1}\}],
\end{equation}
see \cite[Theorem~9.1.1 and p.~509]{monalg-rev}. Using the definition
of ${\rm RC}(I)$ in Eq.~\eqref{rees-cone-eq} of Appendix~\ref{Appendix} 
and noticing that $I$ is
generated by squarefree monomials of degree $2$, we obtain the inclusion
\begin{equation}\label{helvex}
\mathrm{RC}(I)\subset\left\{(b_i)\in\mathbb{R}_+^{s+1}\vert\,
\textstyle\sum_{i=1}^sb_i
\geq 2b_{s+1}\right\}.
\end{equation}
\quad Next we show that $t^{a_\lambda}\notin\overline{I^{b_\lambda}}$.
One has the inequality 
$$
\lambda n =2\left(\frac{\lambda n+1}{2}\right)-1 
\leq 2\left\lceil\frac{\lambda n+1}{2}\right\rceil-1=2b_\lambda-1,
$$
that is, $\lambda n\leq 2b_\lambda-1$. If $t^{a_\lambda}$ is in
$\overline{I^{b_\lambda}}$, then
$t^{a_\lambda}z^{b_\lambda}\in\overline{\mathcal{R}(I)}$ and, by 
Eq.~\eqref{apr16-21}, we obtain $(a_\lambda,b_\lambda)\in{\rm
RC}(I)$. Hence, from Eq.~\eqref{helvex},  
we get $n\lambda \geq 2b_\lambda$, a contradiction. Thus,
$t^{a_\lambda}\notin \overline{I^{b_\lambda}}$
and the claim has been proven. Therefore, $\rho_{ic}(I)\geq
\lambda\alpha_0/b_\lambda$ for all $\lambda\in\mathbb{N}_+$. Hence, noticing 
that $b_\lambda\leq ((\lambda n+1)/2)+1$, we obtain
$$
\rho_{ic}(I)\geq\frac{\lambda\alpha_0}{b_\lambda}\geq\frac{2\lambda\alpha_0}{\lambda
n+3}
\ \ \therefore\ \  
\rho_{ic}(I)\geq \lim_{\lambda\rightarrow\infty}\frac{2\lambda\alpha_0}{\lambda
n+3}=
\frac{2\alpha_0}{n}.
$$
\quad Therefore $\rho_{ic}(I)\geq{2\alpha_0(H)}/|V(H)|$ and the proof
is complete. 
\end{proof}

\begin{lemma}\label{jun9-21}
Let $G$ be a graph. If $C_k$ is an induced odd cycle of length $k\geq
3$, then any minimal vertex cover $C$ of $C_k$ contains an edge of
$G$.
\end{lemma}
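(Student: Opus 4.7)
The plan is to argue by contradiction and exploit the well-known fact that odd cycles are not $2$-colorable. Let the vertices of $C_k$ be $v_1,\ldots,v_k$ with edges $\{v_i,v_{i+1}\}$ (indices mod $k$), and suppose that $C$ is a minimal vertex cover of $C_k$ that contains no two consecutive vertices of the cycle. Since $C$ is a vertex cover, every edge $\{v_i,v_{i+1}\}$ of $C_k$ has at least one endpoint in $C$; by the standing assumption it has exactly one.

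The key observation is that the partition $(C,\, V(C_k)\setminus C)$ is then a proper $2$-coloring of $C_k$, because each edge of $C_k$ has one endpoint in each class. Since $k$ is odd, this contradicts the standard fact that $\chi(C_k)=3$. Hence $C$ must contain two consecutive vertices $v_i, v_{i+1}$ of $C_k$; as $C_k$ is an induced subgraph of $G$, the pair $\{v_i,v_{i+1}\}$ is an edge of $G$ with both endpoints in $C$, which is the desired conclusion.

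The only mild subtlety is making sure one uses ``$C$ is a vertex cover'' (rather than needing minimality) for the contradiction; minimality is not invoked in the argument above, so the statement actually holds for every vertex cover of an induced odd cycle, with minimality being a harmless hypothesis. I expect no serious obstacle: the proof is a one-line consequence of the non-bipartiteness of odd cycles once the contrapositive is set up correctly.
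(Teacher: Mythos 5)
Your proof is correct and follows essentially the same line of reasoning as the paper's: argue by contradiction assuming $C$ is stable, observe that then every edge of $C_k$ has exactly one endpoint in $C$ so $(C,\,V(C_k)\setminus C)$ is a bipartition, and derive a contradiction with the non-bipartiteness of odd cycles. Your remark that minimality is unused is also accurate (and your appeal to inducedness at the very end is harmless but superfluous, since an edge of $C_k$ is automatically an edge of $G$).
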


\begin{proof} Suppose to the contrary that $C$ is a stable set of $G$.
Setting $C'=V(C_k)\setminus C$, note that any edge of $C_k$ 
intersects $C$ and $C'$. Thus $C_k$ is a bipartite graph, a contradiction.
\end{proof}

\begin{proposition}\label{jun14-21-2} Let $G$ be a non-bipartite graph. Then 
$I_c(G)^{(n)}\subset I(G)^{(n)}$ for all $n\geq 1$ and 
$\widehat{\alpha}(I(G))\leq\widehat{\alpha}(I_c(G))$.
\end{proposition}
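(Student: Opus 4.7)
The plan is to prove the inclusion $I_c(G)^{(n)}\subset I(G)^{(n)}$ first and then deduce the inequality on Waldschmidt constants as a formal consequence. Since both $I(G)$ and $I_c(G)$ are squarefree monomial ideals, Lemma~\ref{anoth-one-char-spow-general} gives clean descriptions of their symbolic powers as intersections of powers of minimal primes. The minimal primes of $I(G)$ are $(C)$ for $C$ a minimal vertex cover of $G$, while the minimal primes of $I_c(G)$ are $(t_i,t_j)$ with $\{t_i,t_j\}\in E(G)$, by Alexander duality. Hence
$$I(G)^{(n)}=\bigcap_{C}(C)^n\quad\text{and}\quad I_c(G)^{(n)}=\bigcap_{\{t_i,t_j\}\in E(G)}(t_i,t_j)^n.$$
Translating these into exponent conditions, a monomial $t^a$ lies in $I_c(G)^{(n)}$ if and only if $a_i+a_j\geq n$ for every edge $\{t_i,t_j\}$ of $G$, and lies in $I(G)^{(n)}$ if and only if $\sum_{t_k\in C}a_k\geq n$ for every minimal vertex cover $C$ of $G$.

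The crux of the argument is the combinatorial claim that when $G$ is non-bipartite, every minimal vertex cover $C$ of $G$ contains at least one edge of $G$. Granting this, fix $t^a\in I_c(G)^{(n)}$ and any minimal vertex cover $C$; choose an edge $\{t_i,t_j\}$ with both endpoints in $C$, and obtain $\sum_{t_k\in C}a_k\geq a_i+a_j\geq n$, so $t^a\in (C)^n$. Intersecting over all $C$ yields $t^a\in I(G)^{(n)}$, which proves the asserted containment.

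To establish the combinatorial claim, I would invoke Lemma~\ref{jun9-21}. Since $G$ is non-bipartite, it contains an induced odd cycle $C_k$ (a shortest odd cycle must be induced, since a chord would produce a strictly shorter odd cycle). Given a minimal vertex cover $C$ of $G$, the restriction $C\cap V(C_k)$ is a vertex cover of $C_k$; extract from it a minimal vertex cover $C'$ of $C_k$ and apply Lemma~\ref{jun9-21} to conclude that $C'$ (hence $C$) contains an edge of $G$. Alternatively, one can argue directly: if a minimal vertex cover $C$ of $G$ were a stable set, then every edge of $G$ would meet $C$ in exactly one endpoint, giving a bipartition of $G$ into $C$ and $V(G)\setminus C$, contradicting non-bipartiteness.

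For the Waldschmidt inequality, the containment $I_c(G)^{(n)}\subset I(G)^{(n)}$ immediately gives $\alpha(I_c(G)^{(n)})\geq \alpha(I(G)^{(n)})$, since a nonzero element of the smaller ideal is an element of the larger one. Dividing by $n$ and passing to the limit, which exists by Lemma~\ref{subadditive-g}, yields $\widehat{\alpha}(I_c(G))\geq \widehat{\alpha}(I(G))$. I do not anticipate any genuine obstacle: the only nontrivial input is the characterization of non-bipartite graphs by the presence of an edge inside every minimal vertex cover, and the rest is symbol pushing with the exponent descriptions of symbolic powers.
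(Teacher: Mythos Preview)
Your proof is correct and follows essentially the same route as the paper's. Both arguments hinge on the combinatorial fact that every minimal vertex cover of a non-bipartite graph contains an edge, established via Lemma~\ref{jun9-21} applied to an induced odd cycle. The only difference is cosmetic: the paper first shows $I_c(G)\subset I(G)$ (each generator $\prod_{t_i\in D}t_i$ is divisible by an edge monomial) and then invokes \cite[Lemma~3.10]{Francisco-TAMS} to pass to symbolic powers, whereas you unpack that step directly via the exponent description $\sum_{t_k\in C}a_k\geq a_i+a_j\geq n$. Your alternative one-line argument for the combinatorial claim (a stable minimal vertex cover would force a bipartition) is a nice shortcut that avoids even the odd-cycle lemma.
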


\begin{proof} By \cite[Lemma~3.10]{Francisco-TAMS} we need only show
$I_c(G)\subset I(G)$. Take a minimal vertex cover $D$ of $G$ and pick
an induced odd cycle $C_k$ of $G$ of length $k$. Note that $V(C_k)\bigcap D$ contains a minimal vertex cover
$C$ of $C_k$. Hence, by Lemma~\ref{jun9-21}, $C$ contains an edge $e$ of
$G$. Thus $D$ contains the edge $e$, and consequently $\prod_{t_i\in D}t_i\in I(G)$.
\end{proof}

\section{Covering polyhedra and irreducible
decompositions}\label{section-covering}

In this section we relate the
covering polyhedron and the irreducible decomposition of a monomial
ideal and study when the Newton polyhedron is the irreducible
polyhedron. 

\begin{theorem}\label{irreducible-deco-qA} 
Let $I$
be a monomial ideal of $S$,
let
$I=\bigcap_{i=1}^m\mathfrak{q}_i$ be its
irreducible decomposition, and let $\mathcal{Q}(I)$ be the covering 
polyhedron of $I$. The following hold.
\begin{enumerate}
\item[(a)] If $\mathfrak{q}_k=(t_1^{b_1},\ldots,t_r^{b_r})$,
$b_\ell\geq 1$ for all $\ell$, and
${\rm rad}(\mathfrak{q}_j)\not\subset{\rm rad}(\mathfrak{q}_k)$ for $j\neq
k$, then the vector $b^{-1}:=\sum_{i=1}^rb_i^{-1}e_i$ is a
vertex of $\mathcal{Q}(I)$.
\item[(b)] If $I$ has no embedded associated primes and 
${\rm rad}(\mathfrak{q}_j)\neq{\rm rad}(\mathfrak{q}_i)$ for $j\neq
i$, then there are $\alpha_1,\ldots,\alpha_m$ in
$\mathbb{N}^s\setminus\{0\}$ such
that $\mathfrak{q}_i=\mathfrak{q}_{\alpha_i}$ and $\alpha_i^{-1}$ is a
vertex of $\mathcal{Q}(I)$ for $i=1,\ldots,m$.
\end{enumerate}
\end{theorem}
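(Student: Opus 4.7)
The plan is to prove (a) directly from the standard vertex criterion: a point $y$ is a vertex of $\mathcal{Q}(I)=\{y\mid y\geq 0,\,yA\geq 1\}$ if and only if $y\in\mathcal{Q}(I)$ and there are $s$ linearly independent defining constraints of $\mathcal{Q}(I)$ that are active at $y$ (cf. \cite[Corollary~1.1.47]{monalg-rev}). First I would check that $b^{-1}\in\mathcal{Q}(I)$: every minimal generator $t^{v}$ of $I$ belongs to $\mathfrak{q}_k=(t_1^{b_1},\ldots,t_r^{b_r})$, so some $t_\ell^{b_\ell}$ divides $t^v$, giving $v_\ell\geq b_\ell$ and hence $\langle b^{-1},v\rangle\geq b_\ell^{-1}v_\ell\geq 1$.

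To exhibit the $s$ active independent constraints at $b^{-1}$, the $s-r$ non-negativity constraints $\langle e_i,y\rangle\geq 0$ for $i>r$ are active, since $(b^{-1})_i=0$ there. It remains to produce $r$ minimal generators $t^{v^1},\ldots,t^{v^r}$ of $I$ with $\langle b^{-1},v^\ell\rangle=1$. For each $j\neq k$, the hypothesis ${\rm rad}(\mathfrak{q}_j)\not\subset{\rm rad}(\mathfrak{q}_k)=(t_1,\ldots,t_r)$ supplies an index $i_j>r$ with $t_{i_j}\in{\rm rad}(\mathfrak{q}_j)$ and therefore some power $t_{i_j}^{d_j}\in\mathfrak{q}_j$. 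For each $\ell\in\{1,\ldots,r\}$, the monomial
\[
w_\ell:=t_\ell^{b_\ell}\!\!\prod_{j\neq k}t_{i_j}^{d_j}
\]
lies in every $\mathfrak{q}_i$, hence in $I$, so some minimal generator $t^{v^\ell}$ of $I$ divides $w_\ell$.

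The key (and, I expect, the main obstacle) is to pin down $v^\ell$ sharply. Since $t^{v^\ell}\in\mathfrak{q}_k$, some $t_i^{b_i}$ with $i\leq r$ divides $t^{v^\ell}$; but the only variable of index $\leq r$ occurring in $w_\ell$ is $t_\ell$, so $v^\ell_\ell\geq b_\ell$, while $v^\ell\leq w_\ell$ forces $v^\ell_\ell\leq b_\ell$. Thus $v^\ell_\ell=b_\ell$, ${\rm supp}(t^{v^\ell})\subset\{t_\ell,t_{r+1},\ldots,t_s\}$, and $\langle b^{-1},v^\ell\rangle=b_\ell^{-1}b_\ell=1$. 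Arranging the normals as $v^1,\ldots,v^r,e_{r+1},\ldots,e_s$ yields a block matrix of shape
\[
\begin{pmatrix}{\rm diag}(b_1,\ldots,b_r) & M\\ 0 & I_{s-r}\end{pmatrix}
\]
of determinant $b_1\cdots b_r\neq 0$, so these $s$ active constraints are linearly independent and $b^{-1}$ is a vertex of $\mathcal{Q}(I)$.

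Part (b) will then be immediate from (a). Because $I$ has no embedded associated primes, every associated prime of $I$ is minimal, and distinct minimal primes are always pairwise incomparable; combined with the assumption ${\rm rad}(\mathfrak{q}_j)\neq{\rm rad}(\mathfrak{q}_i)$ for $j\neq i$, this yields ${\rm rad}(\mathfrak{q}_j)\not\subset{\rm rad}(\mathfrak{q}_i)$ for $j\neq i$. Each $\mathfrak{q}_i$ is irreducible, so by the form of irreducible monomial ideals we may write $\mathfrak{q}_i=\mathfrak{q}_{\alpha_i}$ with $\alpha_i\in\mathbb{N}^s\setminus\{0\}$, and applying (a) with $k=i$ (after a relabeling of variables) shows that $\alpha_i^{-1}$ is a vertex of $\mathcal{Q}(I)$ for every $i$.
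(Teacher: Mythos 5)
Your proof is correct and follows essentially the same route as the paper's: for each $\ell\leq r$ you manufacture a monomial supported on $t_\ell$ and variables of index $>r$ whose minimal-generator divisor $t^{v^\ell}$ satisfies $v^\ell_\ell=b_\ell$ and $\langle b^{-1},v^\ell\rangle=1$, then combine these with $e_{r+1},\ldots,e_s$ to exhibit $s$ linearly independent active constraints, and part (b) follows by pairwise incomparability of distinct minimal primes. The only cosmetic difference is that you take the product $\prod_{j\neq k}t_{i_j}^{d_j}$ while the paper takes a least common multiple; both yield a monomial of $\bigcap_{j\neq k}\mathfrak{q}_j$ supported on $\{t_{r+1},\ldots,t_s\}$, so the arguments are interchangeable.
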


\begin{proof} (a): Let $G(I)=\{t^{v_1},\ldots,t^{v_q}\}$ be the
minimal generating set of 
$I$ and let $A$ be the incidence matrix of $I$. Recall that
$\mathcal{Q}(I)=\mathcal{Q}(A)$. For each $j\neq k$ there is $t_{p_j}\in{\rm
rad}(\mathfrak{q}_j)$ such that $t_{p_j}\notin{\rm
rad}(\mathfrak{q}_k)=(t_1,\ldots,t_r)$. Then, $t_{p_j}^{c_{p_j}}$ is
in $G(\mathfrak{q}_j)$ for some $c_{p_j}\geq 1$. Taking the least
common multiple of all $t_{p_j}^{c_{p_j}}$, $j\neq k$, it is not hard
to show that 
there is a minimal generator $t^a$ of $\bigcap_{j\neq
k}\mathfrak{q}_j$ such that ${\rm
supp}(t^a)\subset\{t_i\}_{i=r+1}^s$. Then $t^at_\ell^{b_\ell}$ is
in $I$ for $\ell=1,\ldots,r$ and we can write
$t^at_\ell^{b_\ell}=t^{\delta_\ell}t^{v_{n_\ell}}$ for some
$t^{\delta_\ell}\in S$ and $t^{v_{n_\ell}}\in G(I)$. As
$t^{v_{n_\ell}}\in\mathfrak{q}_k$, it follows that 
$t_\ell^{b_\ell}t^{c_\ell}=t^{v_{n_\ell}}$ for some $t^{c_\ell}$ whose
support is contained in $\{t_i\}_{i=r+1}^s$.
Hence for each $1\leq 
\ell\leq r$ there is $v_{n_\ell}$ in $\{v_1,\ldots,v_q\}$ such that
$t^{v_{n_\ell}}=t_\ell^{b_\ell}t^{c_\ell}$ and  
${\rm supp}(t^{c_\ell})\subset\{t_{r+1},\ldots,t_s\}$. The vector
$b^{-1}=\sum_{i=1}^rb_i^{-1}e_i$ is in $\mathcal{Q}(A)$ because $t^{v_i}\in \mathfrak{q}_k$ for
$i=1,\ldots,q$, and since 
$\{e_i\}_{i=r+1}^s\bigcup\{v_{n_1},\ldots,v_{n_r}\}$ is linearly
independent, and  
$$
\langle b^{-1},e_i\rangle=0\ \ \ \ (i=r+1,\ldots,s); 
\ \ \ \langle b^{-1},v_{n_\ell}\rangle=1\ \ \ \ (\ell=1,\ldots,r),
$$
we get that the vector $b^{-1}$ is a basic feasible solution of the linear system
$y\geq 0$; $yA\geq 1$. Therefore, by
\cite[Theorem~2.3]{bertsimas}, $b^{-1}$ is a 
vertex of $\mathcal{Q}(A)=\mathcal{Q}(I)$. 

(b): As $I$ has no embedded primes and the irreducible decomposition
of $I$ is minimal, by permuting variables, we can apply part (a) to
$\mathfrak{q}_i$ for $i=1,\ldots,m$. Then, there are $\alpha_1,\ldots,\alpha_m$ in
$\mathbb{N}^s\setminus\{0\}$ such 
that $\mathfrak{q}_i=\mathfrak{q}_{\alpha_i}$ and $\alpha_i^{-1}$ is a
vertex of $\mathcal{Q}(A)$ for $i=1,\ldots,m$.
\end{proof}

\begin{remark}\label{jun26-21} Let $I$ be the edge ideal of a weighted oriented graph
\cite{cm-oriented-trees,WOG}, let $\mathfrak{p}=(t_1,\ldots,t_r)$ be a
minimal prime of $I$, and let $\mathfrak{q}=SI_\mathfrak{p}\bigcap S$ be
the $\mathfrak{p}$-primary component of $I$. The irreducible
decomposition of $I$ is known to be minimal \cite[Theorem~25]{WOG}.
Then $\mathfrak{q}=(t_1^{b_1},\ldots,t_r^{b_r})$, $b_i\geq 1$ for all
$i$, and by Theorem~\ref{irreducible-deco-qA}(a),
$b^{-1}=\sum_{i=1}^rb_i^{-1}e_i$ is a vertex of $\mathcal{Q}(I)$
(cf. Example~\ref{non-normal}).
\end{remark}

\begin{lemma}\label{irreducible-ideals}
Let $\mathfrak{q}=(t_1^{b_1},\ldots,t_r^{b_r})$ be an irreducible
monomial ideal of $S$, $b_i\geq 1$ for all $i$, and let $t^a$ be a
monomial of $S$. The following hold.
\begin{enumerate}
\item[(a)] ${\rm NP}(\mathfrak{q})={\rm IP}(\mathfrak{q})$ and the
set of vertices of ${\rm NP}(\mathfrak{q})$ is $V=\{b_1e_1,\ldots,b_re_r\}$.
\item[(b)] $t^a\in\overline{\mathfrak{q}^n}$ if and only if
$\langle a/n,b^{-1}\rangle\geq 1$, where 
$b^{-1}=\sum_{i=1}^rb_i^{-1}e_i$.
\item[(c)] If $\mathfrak{q}$ is normal, then $t^a\in\mathfrak{q}^{(n)}$
if and only if $\langle a/n,b^{-1}\rangle\geq 1$.
\end{enumerate} 
\end{lemma}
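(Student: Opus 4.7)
\noindent The plan is to deduce parts (b) and (c) as essentially immediate consequences of (a), so the real content lies in (a).

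For (a), I would first identify the vertex set of ${\rm NP}(\mathfrak{q}) = \mathbb{R}^s_+ + {\rm conv}(b_1 e_1, \ldots, b_r e_r)$. By Proposition~\ref{np-qa}(c) the vertices lie in $V = \{b_1 e_1, \ldots, b_r e_r\}$, and to show each $b_i e_i$ is actually extremal, I would take any decomposition $b_i e_i = \epsilon + \sum_j \lambda_j b_j e_j$ with $\epsilon \geq 0$, $\lambda_j \geq 0$, $\sum_j \lambda_j = 1$, and read off the coordinates $j \neq i$ to force $\lambda_j = \epsilon_j = 0$, collapsing the decomposition to the trivial one. For the equality ${\rm NP}(\mathfrak{q}) = {\rm IP}(\mathfrak{q})$, I would use that since $\mathfrak{q}$ is its own only irreducible component, the defining matrix $B$ of ${\rm IP}(\mathfrak{q})$ has the single column $b^{-1}$, giving ${\rm IP}(\mathfrak{q}) = \{x \geq 0 : \langle x, b^{-1}\rangle \geq 1\}$. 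The inclusion ${\rm NP}(\mathfrak{q}) \subset {\rm IP}(\mathfrak{q})$ is immediate because $\langle b_j e_j, b^{-1}\rangle = 1$ and $\mathbb{R}^s_+$ is contained in the recession cone of ${\rm IP}(\mathfrak{q})$; for the reverse inclusion, given $x \geq 0$ with $s := \sum_{i \leq r} x_i/b_i \geq 1$, one rescales by $s$ to exhibit $x$ as a convex combination of the $b_j e_j$ plus a non-negative residue lying in $\mathbb{R}^s_+$.

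Part (b) is then a direct corollary of (a) together with Proposition~\ref{np-qa}(a): $t^a \in \overline{\mathfrak{q}^n}$ iff $a/n \in {\rm NP}(\mathfrak{q})$, which by (a) is equivalent to $\langle a/n, b^{-1}\rangle \geq 1$ (the non-negativity $a/n \geq 0$ being automatic). For (c), the plan is to apply Lemma~\ref{anoth-one-char-spow-general} with $I = \mathfrak{q}$: since $\mathfrak{q}$ is primary it has a unique minimal prime $\mathfrak{p} = {\rm rad}(\mathfrak{q})$, and the only primary component associated to that prime is $\mathfrak{q}$ itself, so $\mathfrak{q}^{(n)} = \mathfrak{q}^n$ for every $n \geq 1$. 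The normality hypothesis then supplies $\mathfrak{q}^n = \overline{\mathfrak{q}^n}$, and (c) reduces to (b). The only step requiring any real care is the polyhedral identification in (a), specifically choosing the scaling factor $s$ correctly so that the convex combination sums to $1$ and leaves a non-negative residue; beyond that the argument is elementary bookkeeping, and I do not anticipate a serious obstacle.
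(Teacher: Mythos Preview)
Your proposal is correct and covers all three parts. Parts (b) and (c) match the paper exactly. For part (a) you take a slightly different route: the paper observes that the covering polyhedron $\mathcal{Q}(A)$ of $\mathfrak{q}$ has the single vertex $b^{-1}$ and then invokes Proposition~\ref{np-qa}(b) to conclude ${\rm NP}(\mathfrak{q})=\mathcal{Q}(B)=\{x\geq 0:\langle x,b^{-1}\rangle\geq 1\}$, which is ${\rm IP}(\mathfrak{q})$ by definition; you instead establish both inclusions ${\rm NP}(\mathfrak{q})\subset{\rm IP}(\mathfrak{q})$ and ${\rm IP}(\mathfrak{q})\subset{\rm NP}(\mathfrak{q})$ by hand, the second via the rescaling $\lambda_j=x_j/(b_js)$. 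Your argument is more self-contained and avoids appealing to the duality encoded in Proposition~\ref{np-qa}(b), at the cost of a short explicit computation; the paper's route is shorter because that proposition has already done the work. Both are equally valid.
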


\begin{proof} (a): Let $A$ be the incidence matrix of
$\mathfrak{q}$, that is, $A$ is the matrix with column vectors
$b_1e_1,\ldots,b_re_r$. It is seen that the only vertex of $\mathcal{Q}(A)$ 
is $b^{-1}=\sum_{i=1}^rb_i^{-1}e_i$. Let $B$ be the $s\times 1$
matrix whose only column vector is $b^{-1}$. By
Proposition~\ref{np-qa}, ${\rm NP}(\mathfrak{q})=\mathcal{Q}(B)$.
Therefore 
$$
{\rm IP}(\mathfrak{q}):=\{x\mid x\geq 0;\ \langle x, 
b^{-1}\rangle\geq 1\}=\mathcal{Q}(B)={\rm NP}(\mathfrak{q})=\mathbb{R}_+^s+{\rm 
conv}(b_1e_1,\ldots,b_re_r).
$$
\quad To complete the proof note that the vertices of ${\rm
NP}(\mathfrak{q})$ are $b_1e_1,\ldots,b_re_r$.

(b): This part follows from (a) and Proposition~\ref{np-qa}.

(c): As $\mathfrak{q}^{(n)}=\mathfrak{q}^n=\overline{\mathfrak{q}^n}$,
by part (b), $t^a\in\mathfrak{q}^{(n)}$
if and only if $\langle a/n,b^{-1}\rangle\geq 1$. 
\end{proof}

\begin{proposition}\label{jun15-21} Let $\mathfrak{q}$ be a primary monomial ideal of
$S$. Then ${\rm NP}(\mathfrak{q})={\rm IP}(\mathfrak{q})$ if and only
if up to permutation of variables
$\mathfrak{q}=(t_1^{v_1},\ldots,t_r^{v_r})$ with $v_i\geq 1$ for all
$i$.
\end{proposition}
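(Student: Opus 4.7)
The proof splits into two directions. For $(\Leftarrow)$, if (after permuting the variables) $\mathfrak{q}=(t_1^{v_1},\ldots,t_r^{v_r})$ with each $v_i\geq 1$, Lemma~\ref{irreducible-ideals}(a) gives ${\rm NP}(\mathfrak{q})={\rm IP}(\mathfrak{q})$ at once.

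For $(\Rightarrow)$ I argue the contrapositive: assuming $\mathfrak{q}$ is primary but not irreducible, I exhibit a point of ${\rm IP}(\mathfrak{q})\setminus{\rm NP}(\mathfrak{q})$. Take the irredundant irreducible decomposition $\mathfrak{q}=\bigcap_{i=1}^m \mathfrak{q}_{\alpha_i}$. Since $\mathfrak{q}$ is primary, every $\mathfrak{q}_{\alpha_i}$ shares the radical $(t_1,\ldots,t_r)$ and has the form $\mathfrak{q}_{\alpha_i}=(t_1^{b_{i,1}},\ldots,t_r^{b_{i,r}})$ with $b_{i,j}\geq 1$, and being not irreducible forces $m\geq 2$. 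By Proposition~\ref{primary-monomial} the pure-power minimal generators of $\mathfrak{q}$ are precisely $t_j^{v_j}$ with $v_j:=\max_i b_{i,j}$. Irredundancy (no component contains another) together with $m\geq 2$ yields distinct $i_1,i_2$ and distinct $j_1,j_2\in\{1,\ldots,r\}$ satisfying $b_{i_1,j_1}<b_{i_2,j_1}$ and $b_{i_1,j_2}>b_{i_2,j_2}$; replacing the pair if necessary, I further arrange that the facets $\langle x,\alpha_{i_k}^{-1}\rangle=1$ restricted to the plane $\Pi:=\mathbb{R}e_{j_1}+\mathbb{R}e_{j_2}$ are adjacent non-redundant sides of ${\rm IP}(\mathfrak{q})\cap\Pi$.

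Let $p$ be the intersection of these two facets in $\Pi$, so that $p$ is a vertex of ${\rm IP}(\mathfrak{q})\cap\Pi$ and hence lies in ${\rm IP}(\mathfrak{q})$. To show $p\notin{\rm NP}(\mathfrak{q})$, suppose toward a contradiction that $p=u+v$ with $u\in\mathbb{R}_+^s$ and $v=\sum_\ell\lambda_\ell w_\ell$ a convex combination of minimal generator exponents $w_\ell$. Since $p$ has support in $\{j_1,j_2\}$, so must $v$ and each $w_\ell$. The inclusion $w_\ell\in\mathfrak{q}\subset\mathfrak{q}_{\alpha_{i_1}}$ gives $\langle w_\ell,\alpha_{i_1}^{-1}\rangle\geq 1$; coupled with $v\leq p$ and the facet equality $\langle p,\alpha_{i_1}^{-1}\rangle=1$, this forces $\langle w_\ell,\alpha_{i_1}^{-1}\rangle=1$ for every $\ell$. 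Now a nonnegative integer vector $w\in\mathfrak{q}_{\alpha_{i_1}}$ with $\sum_k w_k/b_{i_1,k}=1$ must equal $b_{i_1,k}e_k$ for some $k$ (membership gives $w_k\geq b_{i_1,k}$ for some $k$, and the sum constraint forces $w_k=b_{i_1,k}$ and $w_{k'}=0$ for $k'\neq k$). So each $w_\ell$ is a pure-power generator, $w_\ell=v_{k_\ell}e_{k_\ell}$ with $k_\ell\in\{j_1,j_2\}$, giving $b_{i_1,k_\ell}=v_{k_\ell}$. Running the same argument with $i_2$ in place of $i_1$ yields $b_{i_2,k_\ell}=v_{k_\ell}$, so $b_{i_1,k_\ell}=b_{i_2,k_\ell}$ for some $k_\ell\in\{j_1,j_2\}$, contradicting the strict swap inequalities from the previous paragraph.

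The main obstacle is the adjacency step in $\Pi$: verifying that the chosen pair $(i_1,i_2)$ can always be taken to give adjacent non-redundant facets of ${\rm IP}(\mathfrak{q})\cap\Pi$, so that their intersection $p$ is a vertex and hence lies in all of ${\rm IP}(\mathfrak{q})$. This is a two-dimensional envelope argument relying on the blocking-type structure of ${\rm IP}(\mathfrak{q})\cap\Pi$ (cf.\ Lemma~\ref{blocking-type}). An elegant alternative is to invoke Theorem~\ref{NP-IP-char} (the equality ${\rm NP}(\mathfrak{q})={\rm IP}(\mathfrak{q})$ is equivalent to $\overline{\mathfrak{q}^n}=\bigcap_i\overline{\mathfrak{q}_{\alpha_i}^n}$ for all $n\geq 1$) and instead exhibit a single monomial witness to the failure of this equality, which sidesteps the adjacency subtlety entirely.
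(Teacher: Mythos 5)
Your $(\Leftarrow)$ direction matches the paper's, both invoking Lemma~\ref{irreducible-ideals}(a). For $(\Rightarrow)$ you take a genuinely different route: instead of the paper's direct contradiction argument (assume ${\rm NP}(\mathfrak{q})={\rm IP}(\mathfrak{q})$ and analyze the vertices of ${\rm NP}(\mathfrak{q})$ in two cases), you argue the contrapositive and try to exhibit a concrete point of ${\rm IP}(\mathfrak{q})\setminus{\rm NP}(\mathfrak{q})$ inside a two-dimensional coordinate plane $\Pi$. You flag the adjacency step as ``the main obstacle,'' and rightly so: it is a genuine gap, and unfortunately the 2D approach cannot be repaired.

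The problem is that there are primary, non-irreducible monomial ideals for which \emph{every} two-dimensional coordinate slice of ${\rm IP}(\mathfrak{q})$ already agrees with the corresponding slice of ${\rm NP}(\mathfrak{q})$, so no witness point can live in any plane $\Pi$. Take $S=K[t_1,t_2,t_3]$ and
\[
\mathfrak{q}=(t_1^2,t_2^2,t_3^2,t_1t_2t_3)=(t_1^2,t_2^2,t_3)\cap(t_1^2,t_2,t_3^2)\cap(t_1,t_2^2,t_3^2),
\]
a $(t_1,t_2,t_3)$-primary ideal with $m=3$ irreducible components, none contained in another. Here ${\rm NP}(\mathfrak{q})=\{x\geq 0:\,x_1+x_2+x_3\geq 2\}$ (the exponent $(1,1,1)$ is dominated by the average of $2e_1,2e_2,2e_3$), while ${\rm IP}(\mathfrak{q})$ is cut out by $x_1/2+x_2/2+x_3\geq 1$, $x_1/2+x_2+x_3/2\geq 1$, $x_1+x_2/2+x_3/2\geq 1$. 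The point $(1/2,1/2,1/2)$ evaluates to exactly $1$ against all three ${\rm IP}$-constraints but has coordinate sum $3/2<2$, so $(1/2,1/2,1/2)\in{\rm IP}(\mathfrak{q})\setminus{\rm NP}(\mathfrak{q})$. However, setting any one coordinate to zero, say $x_3=0$, reduces the three ${\rm IP}$-constraints to $x_1+x_2\geq 2$, $x_1+2x_2\geq 2$, $2x_1+x_2\geq 2$, and the first implies the other two; thus ${\rm IP}(\mathfrak{q})\cap\Pi={\rm NP}(\mathfrak{q})\cap\Pi$ for every coordinate plane $\Pi$. No choice of $(j_1,j_2)$ produces two adjacent non-redundant facets in a coordinate plane, and ``replacing the pair'' never terminates successfully --- the only points separating the two polyhedra have full support.

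The paper avoids this by assuming ${\rm NP}(\mathfrak{q})={\rm IP}(\mathfrak{q})$ and working with the vertices of ${\rm NP}(\mathfrak{q})$ in the full ambient $\mathbb{R}^s$: either all vertices are pure powers $v_je_j$, in which case ${\rm NP}(\mathfrak{q})$ has a single nontrivial facet normal $v^{-1}$ which must coincide with some $\alpha_i^{-1}$, forcing $\mathfrak{q}_{\alpha_i}=(t_1^{v_1},\ldots,t_s^{v_s})$ and contradicting the existence of a non-pure-power minimal generator; or some non-pure-power vertex $v_k$ lies on a facet hyperplane $\langle x,\alpha_i^{-1}\rangle=1$, and combining this with $t^{v_k}\in\mathfrak{q}_{\alpha_i}$ collapses $v_k$ to a pure power, again a contradiction. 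Your suggested alternative --- route through Theorem~\ref{NP-IP-char} (whose proof does not depend on this proposition, so no circularity) and produce a monomial witness to the failure of $\overline{\mathfrak{q}^n}=\bigcap_i\overline{\mathfrak{q}_{\alpha_i}^n}$ --- would be legitimate, but you have not actually carried it out. For the example above the witness is $t_1t_2t_3\in\bigl(\bigcap_i\overline{\mathfrak{q}_{\alpha_i}^2}\bigr)\setminus\overline{\mathfrak{q}^2}$, and producing such a monomial in general still requires an argument you have not supplied.
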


\begin{proof} $\Rightarrow$): By Proposition~\ref{primary-monomial}
we may assume that ${\rm rad}(\mathfrak{q})=(t_1,\ldots,t_s)$ and
also that $\mathfrak{q}$ is minimally generated by
$G(\mathfrak{q})=\{t_1^{v_1},\ldots,t_s^{v_s},t^{v_{s+1}},\ldots,t^{v_q}\}$, 
where $v_i\in\mathbb{N}_+$ for $i=1,\ldots,s$ and
$v_i\in\mathbb{N}^s\setminus\{0\}$ for
$i>s$. We proceed by contradiction assuming that $q>s$. The Newton
polyhedron of $\mathfrak{q}$ is given by
$$
{\rm NP}(\mathfrak{q})=\mathbb{R}_+^s+{\rm
conv}(v_1e_1,\ldots,v_se_s,v_{s+1},\ldots,v_q).
$$
\quad Recall that, by
Eq.~\eqref{jun4-21} of Section~\ref{section-intro}, the irreducible
decomposition of $\mathfrak{q}$ has the form 
$$
\mathfrak{q}=\mathfrak{q}_{\alpha_1}\textstyle\bigcap\cdots\bigcap\mathfrak{q}_{\alpha_m}
$$
for some $\alpha_1,\ldots,\alpha_m$ in $\mathbb{N}^s\setminus\{0\}$.
Note that all entries of each $\alpha_i$ are positive.  
For $i=1,\ldots,m$, let $H_i^+$ and $H_i$ be the following closed
halfspace and its corresponding bounding hyperplane
$$
H_i^+:=\{x\mid \langle x,\alpha_i^{-1}
\rangle\geq 1\}\ \mbox{ and }\ H_i:=\{x\mid \langle x,\alpha_i^{-1}
\rangle=1\}. 
$$
\quad Then ${\rm IP}(\mathfrak{q})$ is equal to
$(\bigcap_{i=1}^mH_i^+)\bigcap(\bigcap_{i=1}^sH_{e_i}^+)$. By removing redundant closed halfspaces in the
intersection we may assume that $(\bigcap_{i=1}^{\ell}H_i^+)\bigcap(\bigcap_{i=1}^sH_{e_i}^+)$ is an
irreducible representation of ${\rm IP}(\mathfrak{q})$
for some $1\leq\ell\leq m$ \cite[p.~12]{monalg-rev}.

\quad Case (I): The vertices of ${\rm NP}(\mathfrak{q})$ are contained
in $\{v_1e_1,\ldots,v_se_s\}$. In this case one has
\begin{align}\label{jun4-21-1} 
{\rm NP}(\mathfrak{q})&=\mathbb{R}_+^s+{\rm
conv}(v_1e_1,\ldots,v_se_s)=\{x\mid x\geq 0; \langle x,v^{-1}
\rangle\geq 1\}\\
&=\{x\mid \langle x,v^{-1}
\rangle\geq 1\}\textstyle\bigcap H_{e_1}^+\bigcap\cdots\bigcap
H_{e_s}^+,\nonumber
\end{align}
where $v^{-1}=\sum_{i=1}^sv_i^{-1}e_i$. By
\cite[Theorem 3.2.1]{webster}, ${\rm IP}(\mathfrak{q})\textstyle\bigcap
H_i$ is a facet of ${\rm IP}(\mathfrak{q})$ for all $1\leq i\leq
\ell$. Note that $H_{e_i}\bigcap{\rm IP}(I)\neq
H_{(v^{-1},1)}\bigcap{\rm IP}(I)$ for all $1\leq i\leq s$ because
$v_ie_i\in{\rm NP}(I)$, $\langle v_ie_i,v^{-1}\rangle=1$ and $\langle
v_ie_i,e_i\rangle\neq 0$. 
Hence, using the equality ${\rm NP}(\mathfrak{q})={\rm
IP}(\mathfrak{q})$ and Eq.~\eqref{jun4-21-1}, we obtain that
$$
{\rm NP}(\mathfrak{q})\textstyle\bigcap H_i={\rm NP}(\mathfrak{q})\bigcap\{x\mid
\langle x,v^{-1}\rangle=1\}
$$
for some $1\leq i\leq\ell$.  Since $v_1e_1,\ldots,v_se_s$ are in the right hand side of this
equality it follows that $\alpha_i=(v_1,\ldots,v_s)$. Hence
$t^{v_q}$ is in
$\mathfrak{q}_{\alpha_i}=(t_1^{v_1},\ldots,t_s^{v_s})$, a
contradiction because $t^{v_q}\in G(\mathfrak{q})$.

Case (II): The vertices of ${\rm NP}(\mathfrak{q})$ are not contained
in $\{v_1e_1,\ldots,v_se_s\}$. Then, $v_k$ is a vertex of ${\rm
NP}(\mathfrak{q})$ for some $k>s$. Hence, by
\cite[Corollary~1.1.49]{monalg-rev}, $v_k$ is a basic feasible
solution in the sense of \cite[Definition~1.1.48]{monalg-rev} for the
system ``$x\geq 0$, $\langle x,\alpha_i^{-1}\rangle\geq 1$,
$i=1,\ldots,\ell$'' of linear
constraints that represent ${IP}(I)$. Therefore there is $1\leq i\leq \ell$ such that
$v_k\in H_i\bigcap{\rm IP}(\mathfrak{q})$. Thus, writing
$v_k=(v_{k,1},\ldots,v_{k,s})$ and
$\alpha_i=(\alpha_{i,1},\ldots,\alpha_{i,s})$, one has
\begin{equation}\label{jun5-21}
({v_{k,1}}/{\alpha_{i,1}})+\cdots+({v_{k,s}}/{\alpha_{i,s}})=1.
\end{equation} 
\quad As $t^{v_k}$ is in
$\mathfrak{q}_{\alpha_i}=(t_1^{\alpha_{i,1}},\ldots,t_s^{\alpha_{i,s}})$,
we get $v_{k,j}\geq\alpha_{i,j}$ for some $1\leq j\leq s$. Hence, by
Eq.~\eqref{jun5-21}, we get $v_{k,p}=0$ for $p\neq j$ 
and $v_{k,j}=\alpha_{i,j}$. Thus $v_k=\alpha_{i,j}e_j$. Since
$t^{v_k}\in G(\mathfrak{q})$, one has
$\alpha_{i,j}=v_{k,j}<v_j$, and consequently
$t_j^{v_j}\in(t_j^{\alpha_{i,j}})=(t^{v_k})$, a contradiction because
$t_j^{v_j}\in G(\mathfrak{q})$.

$\Leftarrow$): This follows from Lemma~\ref{irreducible-ideals}. 
\end{proof}

\begin{proposition}\label{normal-irreducible-ideals}
Let $\mathfrak{q}=(t_1^{b_1},\ldots,t_r^{b_r})$ be an irreducible
monomial ideal of $S$, $b_i\geq 1$ for all $i$. The following
conditions are equivalent.
\begin{enumerate}
\item[(a)] $\mathfrak{q}$ is normal.\quad {\rm (b)} $\mathfrak{q}$ is complete. 
\item[(c)]
$\mathfrak{q}=(t_1,\ldots,t_{j-1},t_j^{b_j},t_{j+1},\ldots,t_r)$ for
some $j$, that is, $b_i=1$ for $i\in\{1,\ldots,r\}\setminus\{j\}$.
\end{enumerate} 
\end{proposition}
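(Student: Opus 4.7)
The plan is to prove the cyclic chain $\text{(a)}\Rightarrow\text{(b)}\Rightarrow\text{(c)}\Rightarrow\text{(a)}$. The implication $\text{(a)}\Rightarrow\text{(b)}$ is immediate: normality asserts $\mathfrak{q}^n=\overline{\mathfrak{q}^n}$ for all $n\geq 1$, so specializing to $n=1$ gives $\mathfrak{q}=\overline{\mathfrak{q}}$. For the remaining two implications, the essential tool is the explicit description $\overline{\mathfrak{q}^n}=\{t^a\mid \langle a/n,b^{-1}\rangle\geq 1\}$ furnished by Lemma~\ref{irreducible-ideals}(b), where $b^{-1}=\sum_{i=1}^r b_i^{-1}e_i$.

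For $\text{(b)}\Rightarrow\text{(c)}$, I would argue by contrapositive. Suppose two of the exponents are at least $2$, and after relabeling assume $b_1\geq b_2\geq 2$. Consider the vector $v=(b_1-1)e_1+e_2$. Its entries satisfy $v_1=b_1-1<b_1$, $v_2=1<b_2$, and $v_i=0<b_i$ for $3\leq i\leq r$, so $t^v\notin\mathfrak{q}$. On the other hand $\langle v,b^{-1}\rangle=(b_1-1)/b_1+1/b_2=1-1/b_1+1/b_2\geq 1$ since $b_1\geq b_2$, so by Lemma~\ref{irreducible-ideals}(b) one has $t^v\in\overline{\mathfrak{q}}$. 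This contradicts $\overline{\mathfrak{q}}=\mathfrak{q}$, so at most one of the $b_i$ can exceed $1$, which is exactly the form stated in (c).

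For $\text{(c)}\Rightarrow\text{(a)}$, after relabeling take $j=r$, so $\mathfrak{q}=(t_1,\ldots,t_{r-1},t_r^{b_r})$. It suffices to show $\overline{\mathfrak{q}^n}\subseteq\mathfrak{q}^n$ for all $n\geq 1$. Take $t^a\in\overline{\mathfrak{q}^n}$, i.e., $\sum_{i=1}^{r-1}a_i+a_r/b_r\geq n$. Write $a_r=b_r\ell+c$ with $\ell\in\mathbb{N}$ and $0\leq c<b_r$. If $\ell\geq n$, then $t_r^{a_r}\in(t_r^{b_r})^n\subseteq\mathfrak{q}^n$ and we are done. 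Otherwise $\ell<n$, and the inequality becomes $\sum_{i=1}^{r-1}a_i+c/b_r\geq n-\ell$; since $\sum_{i<r}a_i$ and $n-\ell$ are integers while $c/b_r\in[0,1)$, this forces $\sum_{i=1}^{r-1}a_i\geq n-\ell$. Hence $t_1^{a_1}\cdots t_{r-1}^{a_{r-1}}\in(t_1,\ldots,t_{r-1})^{n-\ell}$ and $t_r^{a_r}\in(t_r^{b_r})^{\ell}$, yielding
\[
t^a\in(t_1,\ldots,t_{r-1})^{n-\ell}(t_r^{b_r})^{\ell}\cdot S\subseteq\mathfrak{q}^{n-\ell}\mathfrak{q}^{\ell}=\mathfrak{q}^n.
\]
The main subtlety is the implication $\text{(b)}\Rightarrow\text{(c)}$, where the correctness of the construction $v=(b_1-1,1,0,\ldots,0)$ hinges on the ordering $b_1\geq b_2$ being available by symmetry; the other direction is a straightforward case split driven by the integrality of exponent vectors.
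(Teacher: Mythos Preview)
Your proof is correct and follows essentially the same approach as the paper: the same witness $t^{(b_i-1)e_i+e_j}$ (after relabeling) for $\text{(b)}\Rightarrow\text{(c)}$, and the same Euclidean-division argument $a_j=b_j\ell+c$ combined with integrality for $\text{(c)}\Rightarrow\text{(a)}$. The only cosmetic differences are which index you designate as the special one ($j=r$ versus the paper's $j=1$) and that you separate the case $\ell\geq n$ explicitly, while the paper handles it uniformly via the inequality $k_1+a_2+\cdots+a_r\geq n$.
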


\begin{proof} (a) $\Rightarrow$ (b): This implication is clear because
all powers of $\mathfrak{q}$ are complete.

(b) $\Rightarrow$ (c): We proceed by contradiction assuming that there
are $b_i$ and $b_j$, $1\leq i<j\leq r$, such that $b_i\geq 2$ and $b_j\geq 2$.
We may assume $b_i\geq b_j$. Then the vector $(b_i-1)e_i+e_j$
satisfies the linear inequality
$$
b_1^{-1}x_1+\cdots+b_r^{-1}x_s\geq 1
$$
because $((b_i-1)/b_i)+(1/b_j)\geq 1$. Thus $(b_i-1)e_i+e_j$ is in
${\rm IP}(\mathfrak{q})$. Hence, using
Lemma~\ref{irreducible-ideals}, one has ${\rm
NP}(\mathfrak{q})={\rm IP}(\mathfrak{q})$. Therefore, by
Proposition~\ref{np-qa}, $t_i^{b_i-1}t_j$ is in
$\overline{\mathfrak{q}}=\mathfrak{q}$, a contradiction because 
$t_i^{b_i-1}t_j$ is not a multiple of $t_i^{b_i}$ or $t_j^{b_j}$.

(c) $\Rightarrow$ (a):  For simplicity of notation we may assume $j=1$,
that is, $b_1\geq 1$ and $b_i=1$ for $i=2,\ldots,r$. 
To show that
$\mathfrak{q}$ is normal it suffices to show the inclusion
$\overline{\mathfrak{q}^n}\subset\mathfrak{q}^n$ for $n\geq 1$. Take
$t^a$ a minimal generator of $\overline{\mathfrak{q}^n}$ with
$a=(a_1,\ldots,a_s)$ and $a_i=0$ for $i>r$. Then, by Proposition~\ref{np-qa},
$a/n\in{\rm NP}(\mathfrak{q})$. Using Lemma~\ref{irreducible-ideals}, one has ${\rm
NP}(\mathfrak{q})={\rm IP}(\mathfrak{q})$. Hence $a/n$ satisfies the
inequality 
$$
b_1^{-1}x_1+x_2+\cdots+x_r\geq 1,
$$
that is, $b_1^{-1}a_1+a_2+\cdots+a_r\geq n$. By the division
algorithm one can write $a_1=k_1b_1+r_1$, where
$k_1,r_1\in\mathbb{N}$ and $0\leq r_1<b_1$. Then
$$
b_1^{-1}(k_1b_1+r_1)+a_2+\cdots+a_r=k_1+{b_1^{-1}}{r_1}+a_2+\cdots+a_r\geq
n.
$$ 
\quad If $n-1\geq k_1+a_2+\cdots+a_r$, using the inequality above, 
we obtain $b_1^{-1}r_1\geq 1$, a contradiction. Thus
$k_1+a_2+\cdots+a_r\geq n$. Writing $t^a$ as
$$
t^a=t_1^{a_1}\cdots t_r^{a_r}=((t_1^{b_1})^{k_1}t_2^{a_2}\cdots
t_r^{a_r})t_1^{r_1},
$$
we obtain that $t^a$ is in $\mathfrak{q}^n$.
\end{proof}

\begin{theorem}\label{NP-IP-char}
Let $I$ be a monomial ideal of $S$ and let 
$I=\mathfrak{q}_1\bigcap\cdots\bigcap\mathfrak{q}_m$ be the
irreducible decomposition of $I$. Then, ${\rm NP}(I)={\rm IP}(I)$ if
and only if
$\overline{I^n}=\overline{\mathfrak{q}_1^n}\bigcap\cdots\bigcap\overline{\mathfrak{q}_m^n}$
for all $n\geq 1$.
\end{theorem}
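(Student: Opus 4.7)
The plan is to translate both sides of the biconditional into statements about polyhedra via the integral closure formula $t^a\in\overline{J^n}\Longleftrightarrow a/n\in{\rm NP}(J)$ from Proposition~\ref{np-qa}(a), and then to use Lemma~\ref{irreducible-ideals}(a) to identify ${\rm IP}(I)$ with the intersection of the Newton polyhedra of the irreducible components.

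First I would establish the key identity
\[
{\rm IP}(I)=\bigcap_{i=1}^m{\rm NP}(\mathfrak{q}_i).
\]
Writing $\mathfrak{q}_i=\mathfrak{q}_{\alpha_i}$ with $\alpha_i\in\mathbb{N}^s\setminus\{0\}$ as in Eq.~\eqref{jun4-21}, by definition ${\rm IP}(I)=\{x\geq 0\mid \langle x,\alpha_i^{-1}\rangle\geq 1\text{ for all }i\}$, and by Lemma~\ref{irreducible-ideals}(a) each factor $\{x\geq 0\mid \langle x,\alpha_i^{-1}\rangle\geq 1\}$ equals ${\rm NP}(\mathfrak{q}_i)$. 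Since $I\subset\mathfrak{q}_i$ gives ${\rm NP}(I)\subset{\rm NP}(\mathfrak{q}_i)$ for every $i$, the inclusion ${\rm NP}(I)\subset{\rm IP}(I)$ holds unconditionally.

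For the forward direction, assume ${\rm NP}(I)={\rm IP}(I)$. For any $a\in\mathbb{N}^s$ and $n\geq 1$, Proposition~\ref{np-qa}(a) yields
\[
t^a\in\overline{I^n}\ \Longleftrightarrow\ a/n\in{\rm NP}(I)={\rm IP}(I)=\bigcap_{i=1}^m{\rm NP}(\mathfrak{q}_i)\ \Longleftrightarrow\ t^a\in\bigcap_{i=1}^m\overline{\mathfrak{q}_i^n}.
\]
Because $\overline{I^n}$ and each $\overline{\mathfrak{q}_i^n}$ are monomial ideals (and so is their intersection), equality as sets of monomial exponents upgrades to equality as ideals, giving $\overline{I^n}=\bigcap_{i=1}^m\overline{\mathfrak{q}_i^n}$ for all $n\geq 1$.

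For the converse, assume $\overline{I^n}=\bigcap_{i=1}^m\overline{\mathfrak{q}_i^n}$ for all $n\geq 1$; I must show ${\rm IP}(I)\subset{\rm NP}(I)$. Take any rational point $x\in{\rm IP}(I)$ and choose $n\geq 1$ with $nx\in\mathbb{N}^s$. Since $x\in{\rm NP}(\mathfrak{q}_i)$ for every $i$, Proposition~\ref{np-qa}(a) gives $t^{nx}\in\overline{\mathfrak{q}_i^n}$ for every $i$, so $t^{nx}\in\bigcap_{i=1}^m\overline{\mathfrak{q}_i^n}=\overline{I^n}$, hence $x=nx/n\in{\rm NP}(I)$. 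Thus ${\rm NP}(I)$ contains every rational point of the rational polyhedron ${\rm IP}(I)$; as ${\rm NP}(I)$ is closed and the rational points are dense in ${\rm IP}(I)$ (both being rational polyhedra by Lemma~\ref{blocking-type}), we conclude ${\rm IP}(I)\subset{\rm NP}(I)$, and combined with the unconditional reverse inclusion, ${\rm NP}(I)={\rm IP}(I)$.

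No step here is a serious obstacle once the correct dictionary is set up; the only point requiring mild care is passing from rational points to all of ${\rm IP}(I)$ in the converse direction, which is handled by the density of rational points in a rational polyhedron.
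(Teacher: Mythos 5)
Your proof is correct and takes essentially the same route as the paper's: translate both sides into polyhedral statements via Proposition~\ref{np-qa}(a), use Lemma~\ref{irreducible-ideals}(a) to identify ${\rm IP}(I)$ with $\bigcap_i{\rm NP}(\mathfrak{q}_i)$, and then pass between integral vectors and all (or all rational) points of the polyhedron. The only cosmetic differences are that you state the identity ${\rm IP}(I)=\bigcap_i{\rm NP}(\mathfrak{q}_i)$ up front, derive the unconditional inclusion ${\rm NP}(I)\subset{\rm IP}(I)$ directly from monotonicity of the Newton polyhedron rather than citing an external reference (the paper notes this same self-contained derivation as a remark), and finish the converse with a density argument where the paper simply says ``we may assume'' the point is rational; these are the same idea expressed with slightly different bookkeeping.
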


\begin{proof} $\Rightarrow$): The inclusion ``$\subset$'' is clear
because $\overline{I^n}\subset\overline{\mathfrak{q}_i^n}$ for all
$i$. To show the inclusion ``$\supset$'' take
$t^a\in\overline{\mathfrak{q}_i^n}$ 
for all $i$. Then, by Proposition~\ref{np-qa}, $a/n\in {\rm
NP}(\mathfrak{q}_i)$ for all $i$. Hence, by
Lemma~\ref{irreducible-ideals}, $a/n\in
{\rm IP}(\mathfrak{q}_i)$ for all $i$. Thus, by construction of ${\rm
IP}(I)$, we get $a/n\in {\rm IP}(I)=
{\rm NP}(I)$. Then, by Proposition~\ref{np-qa},
$t^a\in\overline{I^n}$.

$\Leftarrow$): The inclusion ${\rm
NP}(I)\subset{\rm IP}(I)$ holds in general
\cite[Theorem~3.7]{Seceleanu-convex-bodies}. Note that this
inclusion follows from Lemma~\ref{irreducible-ideals} and Eq.~\eqref{NP-def} by
using a generating set for $I$ and the
equality $I=\textstyle\bigcap_{i=1}^m\mathfrak{q}_i$. To show the
inclusion ${\rm
NP}(I)\supset{\rm IP}(I)$ take $a\in{\rm IP}(I)$. As ${\rm IP}(I)$ is
a rational polyhedron of blocking type (Lemma~\ref{blocking-type}), we may assume that 
$0\neq a\in\mathbb{Q}_+^s$. Then there is $0\neq n\in\mathbb{N}$ such that
$na\in\mathbb{N}^s$. Using Lemma~\ref{irreducible-ideals}, we get
$$
n{\rm IP}(I)\subset n{\rm IP}(\mathfrak{q}_i)=n{\rm NP}(\mathfrak{q}_i)
$$  
for all $i$. Setting $b=na$, we obtain $b\in n{\rm NP}(\mathfrak{q}_i)$ for all
$i$. Hence, by Proposition~\ref{np-qa},
$t^b\in\overline{\mathfrak{q}_i^n}$ for all $i$. Thus, by hypothesis
and Proposition~\ref{np-qa}, one has $t^b\in\overline{I^n}$ and
$b/n=a$ is in ${\rm NP}(I)$. 
\end{proof}

\begin{proposition}\label{jan19-21} Let $I$ be a monomial ideal of $S$, 
let $I=\mathfrak{q}_1\bigcap\cdots\bigcap\mathfrak{q}_m$ be the
irreducible decomposition of $I$, let $\alpha_i$ 
be the vector in $\mathbb{N}^s\setminus\{0\}$ such that 
$\mathfrak{q}_i=\mathfrak{q}_{\alpha_i}$ for $i=1,\ldots,m$, and let 
$B$ be the $s\times m$ matrix with column vectors
$\alpha_1^{-1},\ldots,\alpha_m^{-1}$. The
following hold.
\begin{enumerate}
\item[(a)] A monomial $t^a$ is in
$\overline{\mathfrak{q}_1^n}\bigcap\cdots\bigcap\overline{\mathfrak{q}_m^n}$
if and only if $a/n$ is in $\mathcal{Q}(B)$.
\item[(b)] If ${\rm rad}(\mathfrak{q}_j)\neq{\rm
rad}(\mathfrak{q}_i)$ for $i\neq j$ 
and the isolated components
$\mathfrak{q}_1,\ldots,\mathfrak{q}_r$ of $I$ are normal, then a
monomial $t^a$ is in $I^{(n)}$ if and only if $\langle
a/n,\alpha_i^{-1}\rangle\geq 1$ for $i=1,\ldots,r$. If in addition we
assume that $I$ has no embedded primes, then $t^a\in I^{(n)}$ if and
only if $a/n\in\mathcal{Q}(B)$.
\item[(c)] If ${\rm NP}(I)={\rm IP}(I)$, then $t^a\in \overline{I^n}$
if and only if $a/n\in\mathcal{Q}(B)$. If in addition we assume that 
$I$ has no embedded
primes, ${\rm rad}(\mathfrak{q}_j)\neq{\rm
rad}(\mathfrak{q}_i)$ for $i\neq j$, and $\mathfrak{q}_i$ is normal for all $i$, 
then $\overline{I^n}=I^{(n)}$ for all $n\geq 1$.
\end{enumerate}
\end{proposition}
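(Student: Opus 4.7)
The plan is to derive all three parts from the componentwise description of integral closure of powers of an irreducible ideal given in Lemma~\ref{irreducible-ideals}, combined with the symbolic power formula of Lemma~\ref{anoth-one-char-spow-general} and the equivalence established in Theorem~\ref{NP-IP-char}.

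For part (a), I would note that by the very definition of $\mathcal{Q}(B)$, a vector $a/n \in \mathbb{Q}_+^s$ lies in $\mathcal{Q}(B)$ if and only if $\langle a/n, \alpha_i^{-1}\rangle \geq 1$ for all $i=1,\ldots,m$. By Lemma~\ref{irreducible-ideals}(b) applied to each irreducible $\mathfrak{q}_i = \mathfrak{q}_{\alpha_i}$, this is equivalent to $t^a \in \overline{\mathfrak{q}_i^n}$ for every $i$, which is precisely the membership in the intersection $\bigcap_{i=1}^m \overline{\mathfrak{q}_i^n}$. One should also verify that the nonnegativity constraint $a/n \geq 0$ is automatic from $a \in \mathbb{N}^s$.

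For part (b), I would first invoke the distinct-radical hypothesis to identify the irreducible decomposition with a minimal primary decomposition, so that the isolated components $\mathfrak{q}_1, \ldots, \mathfrak{q}_r$ are exactly the primary components attached to the minimal primes. Lemma~\ref{anoth-one-char-spow-general} then yields $I^{(n)} = \mathfrak{q}_1^n \cap \cdots \cap \mathfrak{q}_r^n$. Since each $\mathfrak{q}_i$ with $i \leq r$ is assumed normal, $\mathfrak{q}_i^n = \overline{\mathfrak{q}_i^n}$, and Lemma~\ref{irreducible-ideals}(b) (equivalently (c)) gives the criterion $\langle a/n, \alpha_i^{-1}\rangle \geq 1$ for $i=1,\ldots,r$. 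If $I$ has no embedded primes, then $r=m$, and the combined constraints are precisely the defining inequalities of $\mathcal{Q}(B)$.

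For part (c), the first assertion is immediate from combining Theorem~\ref{NP-IP-char}, which under the hypothesis ${\rm NP}(I) = {\rm IP}(I)$ yields $\overline{I^n} = \bigcap_{i=1}^m \overline{\mathfrak{q}_i^n}$ for all $n \geq 1$, with part (a). For the second assertion, under the additional hypotheses (no embedded primes, distinct radicals, all components normal) part (b) characterizes $I^{(n)}$ by the same condition $a/n \in \mathcal{Q}(B)$ that part (c) just gave for $\overline{I^n}$, so the two ideals coincide. No step should present a real obstacle; the only minor care point is the bookkeeping in part (b) to confirm that the irreducible decomposition with pairwise distinct radicals is genuinely the minimal primary decomposition, so that the isolated $\mathfrak{q}_i$ really are the primary components indexed by minimal primes in Lemma~\ref{anoth-one-char-spow-general}.
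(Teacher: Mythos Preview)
Your proposal is correct and follows essentially the same approach as the paper: part (a) via Lemma~\ref{irreducible-ideals}(b) and the definition of $\mathcal{Q}(B)$, part (b) via Lemma~\ref{anoth-one-char-spow-general} together with normality and Lemma~\ref{irreducible-ideals}, and part (c) via Theorem~\ref{NP-IP-char} combined with parts (a) and (b). Your extra remarks on the nonnegativity constraint and on identifying the irreducible decomposition with the minimal primary decomposition are fine clarifications but do not depart from the paper's argument.
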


\begin{proof} (a): By Lemma~\ref{irreducible-ideals}, $t^a$ is in 
$\overline{\mathfrak{q}_1^n}\bigcap\cdots\bigcap\overline{\mathfrak{q}_m^n}$
if and only if $\langle a/n,\alpha_i^{-1}\rangle\geq 1$ for
$i=1,\ldots,m$, that is, if and only if $a/n\in\mathcal{Q}(B)$.

(b): The $n$-th symbolic power of $I$ is given by 
$I^{(n)}=\mathfrak{q}_1^n\bigcap\cdots\bigcap\mathfrak{q}_r^n$
(Lemma~\ref{anoth-one-char-spow-general}). Since
$\mathfrak{q}_i$ is normal for $i=1,\ldots,r$, we obtain that $t^a$ is
in $I^{(n)}$ if and only if $t^a$ is in
$\overline{\mathfrak{q}_1^n}\bigcap\cdots\bigcap\overline{\mathfrak{q}_r^n}$.
Hence, By Lemma~\ref{irreducible-ideals}, $t^a$ is
in $I^{(n)}$ if and only if $\langle a/n,\alpha_i^{-1}\rangle\geq 1$
for $i=1,\ldots,r$. In particular, if $I$ has no embedded primes, that
is, $r=m$, one has that $t^a\in I^{(n)}$ if and
only if $a/n\in\mathcal{Q}(B)$.

(c): By Theorem~\ref{NP-IP-char}, $t^{a}$ is in $\overline{I^n}$ if
and only if $t^a$ is in
$\overline{\mathfrak{q}_1^n}
\bigcap\cdots\bigcap\overline{\mathfrak{q}_m^n}$. Thus, by part (a), 
$t^a$ is in $\overline{I^n}$ if and only if $a/n\in\mathcal{Q}(B)$.
Therefore, under the additional assumptions, using part (b), we obtain 
$\overline{I^n}=I^{(n)}$ for all $n\geq 1$.
\end{proof}

\begin{theorem}\label{np=ip-char} Let $I$ be a monomial ideal of $S$
and let $I=\mathfrak{q}_1\bigcap\cdots\bigcap\mathfrak{q}_m$ be its 
irreducible decomposition. Suppose that $I$ has no embedded
associated primes, ${\rm rad}(\mathfrak{q}_j)\neq{\rm rad}(\mathfrak{q}_i)$ for $j\neq
i$ and $\mathfrak{q}_i$ is normal for all $i$. The following conditions
are equivalent.
\begin{enumerate}
\item[(a)] ${\rm IP}(I)$ is integral.\quad {\rm(b)} ${\rm NP}(I)={\rm
IP}(I)$.\quad {\rm (c)} $\overline{I^n}=I^{(n)}$ for all $n\geq 1$.
\end{enumerate}
\end{theorem}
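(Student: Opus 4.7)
The plan is to prove (b)$\Leftrightarrow$(a) by a vertex argument, and (b)$\Leftrightarrow$(c) by invoking Theorem~\ref{NP-IP-char} together with the normality hypotheses.

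The direction (b)$\Rightarrow$(a) is immediate: the Newton polyhedron ${\rm NP}(I)$ is always integral, since by Proposition~\ref{np-qa}(c) its vertices lie among the exponent vectors of the minimal generators of $I$, hence are integral; so if ${\rm NP}(I)={\rm IP}(I)$ then ${\rm IP}(I)$ is integral. For (a)$\Rightarrow$(b), I would start from the always-valid inclusion ${\rm NP}(I)\subset{\rm IP}(I)$ (noted in the proof of Theorem~\ref{NP-IP-char}) and prove the reverse. Since ${\rm IP}(I)=\mathcal{Q}(B)$ is a covering polyhedron, Proposition~\ref{cpr} gives ${\rm IP}(I)=\mathbb{R}_+^s+{\rm conv}(V({\rm IP}(I)))$. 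Pick any vertex $v$ of ${\rm IP}(I)$; by hypothesis $v\in\mathbb{N}^s$, and $\langle v,\alpha_i^{-1}\rangle\geq 1$ for every $i$. Applying Lemma~\ref{irreducible-ideals}(b) with $n=1$ yields $t^v\in\overline{\mathfrak{q}_i}$, and since each $\mathfrak{q}_i$ is normal (so complete), actually $t^v\in\mathfrak{q}_i$ for all $i$. Thus $t^v\in I=\bigcap_i\mathfrak{q}_i$, so $v\in{\rm NP}(I)$. Since ${\rm NP}(I)$ is convex and stable under translation by $\mathbb{R}_+^s$, the decomposition $\mathbb{R}_+^s+{\rm conv}(V({\rm IP}(I)))\subset{\rm NP}(I)$ follows, giving ${\rm IP}(I)\subset{\rm NP}(I)$.

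The direction (b)$\Rightarrow$(c) is a direct application of Proposition~\ref{jan19-21}(c): under the standing hypotheses (no embedded primes, distinct radicals, each $\mathfrak{q}_i$ normal), the equality ${\rm NP}(I)={\rm IP}(I)$ gives $\overline{I^n}=I^{(n)}$ for all $n\geq 1$.

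For (c)$\Rightarrow$(b), I would use Theorem~\ref{NP-IP-char}, which reduces the claim to the equality $\overline{I^n}=\bigcap_{i=1}^m\overline{\mathfrak{q}_i^n}$ for all $n\geq 1$. Because $I$ has no embedded primes and the radicals ${\rm rad}(\mathfrak{q}_i)$ are pairwise distinct, Lemma~\ref{anoth-one-char-spow-general} gives $I^{(n)}=\mathfrak{q}_1^n\cap\cdots\cap\mathfrak{q}_m^n$. Normality of each $\mathfrak{q}_i$ gives $\mathfrak{q}_i^n=\overline{\mathfrak{q}_i^n}$ for all $n$. Chaining these with the hypothesis (c) yields
\[
\overline{I^n}\,=\,I^{(n)}\,=\,\bigcap_{i=1}^m\mathfrak{q}_i^n\,=\,\bigcap_{i=1}^m\overline{\mathfrak{q}_i^n},
\]
and Theorem~\ref{NP-IP-char} then delivers ${\rm NP}(I)={\rm IP}(I)$. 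The argument is essentially a bookkeeping exercise that assembles previously proved results; the one place that needs a little care is (a)$\Rightarrow$(b), where the integrality of a vertex of ${\rm IP}(I)$ only gives membership in $\overline{\mathfrak{q}_i}$ via Lemma~\ref{irreducible-ideals}(b), and upgrading this to membership in $\mathfrak{q}_i$ itself is exactly where the normality hypothesis is used.
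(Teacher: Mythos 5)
Your proposal is correct and follows essentially the same route as the paper: for (a)$\Rightarrow$(b) the paper also starts from the general inclusion ${\rm NP}(I)\subset{\rm IP}(I)$, takes an integral vertex $a$ of ${\rm IP}(I)$, applies Lemma~\ref{irreducible-ideals} to get $t^a\in\overline{\mathfrak{q}_i}=\mathfrak{q}_i$ (using normality), hence $t^a\in I$ and $a\in{\rm NP}(I)$, and then invokes Proposition~\ref{cpr} to conclude; for (b)$\Rightarrow$(c) the paper cites Proposition~\ref{jan19-21}(c); and for (c)$\Rightarrow$(b) the paper chains $\overline{I^n}=I^{(n)}=\bigcap\mathfrak{q}_i^n=\bigcap\overline{\mathfrak{q}_i^n}$ via Lemma~\ref{anoth-one-char-spow-general} and then applies Theorem~\ref{NP-IP-char}. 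The only cosmetic difference is that the paper organizes the argument as the cycle (a)$\Rightarrow$(b)$\Rightarrow$(c)$\Rightarrow$(a) while you prove (a)$\Leftrightarrow$(b) and (b)$\Leftrightarrow$(c); the content of each directional step is the same.
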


\begin{proof} (a) $\Rightarrow$ (b): The inclusion ${\rm
NP}(I)\subset{\rm IP}(I)$ holds in general 
\cite[Theorem~3.7]{Seceleanu-convex-bodies}. Let $V$ be the vertex set
of ${\rm IP}(I)$. We claim that $V\subset{\rm NP}(I)$. Take $a\in V$.
As $a$ is integral, by Lemma~\ref{irreducible-ideals}, $t^a\in{\rm
NP}(\mathfrak{q}_i)$ for all $i$. Thus, 
$t^a\in\overline{\mathfrak{q}_i}=\mathfrak{q}_i$ for all $i$, 
$t^a\in I$, and $a\in {\rm NP}(I)$. This proves the claim. Therefore,
by Proposition~\ref{cpr}, we obtain
$$
{\rm IP}(I)=\mathbb{R}_+^s+{\rm conv}(V)\subset
\mathbb{R}_+^s+{\rm NP}(I)={\rm NP}(I).
$$
\quad (b) $\Rightarrow$ (c): This implication follows at once from
Proposition~\ref{jan19-21}(c).

(c) $\Rightarrow$ (a):  Since ${\rm NP}(I)$ is integral, we need only
show ${\rm NP}(I)={\rm IP}(I)$. Note that
$$
\overline{I^n}=I^{(n)}={\mathfrak{q}_1^n}\textstyle\bigcap\cdots\bigcap{\mathfrak{q}_m^n}
=\overline{\mathfrak{q}_1^n}\bigcap\cdots\bigcap\overline{\mathfrak{q}_m^n}
$$
for all $n\geq 1$ (Lemma~\ref{anoth-one-char-spow-general}). Hence,
by Theorem~\ref{NP-IP-char}, we obtain ${\rm 
NP}(I)={\rm IP}(I)$.
\end{proof}

\begin{remark}\label{aug7-21} Let $I$ be a squarefree monomial ideal
of $S$. Then by \cite[Corollary~4.16]{Francisco-TAMS},
$\widehat{\rho}(I)\geq 1$, with equality if and only if
$I^{(n)}\subset\overline{I^n}$ for every $n\geq 1$. Therefore, by
Corollary~\ref{ntf-char}, $\widehat{\rho}(I)=1$ if and only if
$\mathcal{Q}(I)$ is integral. 
\end{remark}

\section{Examples}\label{examples-section}

\begin{example}\label{non-normal} Let $S=\mathbb{Q}[t_1,t_2,t_3]$ be a
polynomial ring and let $I=(t_1t_2^2,\,t_2t_3^2,\,t_1t_3^2)$ be the monomial
ideal whose incidence matrix is
$$
A=\left[\begin{matrix}
1&0&1\cr
2&1&0\cr
0&2&2
\end{matrix}
\right].
$$
\quad Using Procedure~\ref{non-normal-procedure} we obtain that the vertices of
$\mathcal{Q}(I)$ are 
$$
(0, 1/2, 1/2),\, (1,0, 1/2),\, (1,1,0),\, (1/3, 1/3, 1/3). 
$$
\quad The irreducible decomposition of $I$
is minimal because $I$ is the edge ideal of a weighted oriented graph
\cite[Theorem~25]{WOG}. The minimal primes of $I$ are
$\mathfrak{p}_1=(t_2,t_3)$, $\mathfrak{p}_2=(t_1,t_3)$ and
$\mathfrak{p}_3=(t_1,t_2)$. Let $\mathfrak{q}_i$ be the irreducible
component of $I$ corresponding to $\mathfrak{p}_i$. Then, by
Theorem~\ref{irreducible-deco-qA}(a), the 
first three vertices of $\mathcal{Q}(A)$ correspond to $\mathfrak{p}_1$, $\mathfrak{p}_2$,
$\mathfrak{p}_3$, respectively, and
we have the equality $I=(t_2^2,t_3^2)\bigcap(t_1,t_3^2)\bigcap(t_1,t_2)$.
\end{example}

\begin{example}\label{embedded-primes} 
Let $S=\mathbb{Q}[t_1,t_2,t_3,t_4]$ be a
polynomial ring and let $A$ be the incidence matrix of the monomial
ideal $I=(t_1t_2,\,t_3t_4^3,\,t_1t_3t_4^2,\,
t_2t_3^3,t_3^3t_4^2)$.
Adapting Procedure~\ref{non-normal-procedure} we obtain that the
vertices of $\mathcal{Q}(A)$ are 
\begin{align*}
&(0,   1,   1,   0),\,(0,\,   1,\,
0,\, 1/2)\, ,\, (1,   0,   1,   0),\,(1,0, 1/3, 2/9),\\
&(2/7,\,5/7,\, 1/7,\, 2/7),\, (3/7,\, 4/7,\, 1/7,\, 2/7).
\end{align*}
\quad The irreducible decomposition of the ideal is
$I=(t_2,t_3)\bigcap(t_2,t_4^2)\bigcap(t_1,t_3)\bigcap(t_1,t_3^3,t_4^3)$
and $(t_1,t_3,t_4)$ is an embedded associated prime of $I$.  
Then, by Theorem~\ref{irreducible-deco-qA}(a), the first three vertices 
of $\mathcal{Q}(A)$ listed above determine the irreducible components
of $I$ corresponding to the minimal primes
$\mathfrak{p}_1=(t_2,t_3)$, $\mathfrak{p}_2=(t_2,t_4)$ and
$\mathfrak{p}_3=(t_1,t_3)$.
\end{example}

\begin{example}\label{covering-algebras} 
Let $S=\mathbb{Q}[t_1,t_2,t_3]$ be a
polynomial ring, let $\mathcal{Q}(C)$ be the covering polyhedron of the matrix $C=(1/2,\, 1/5,\,
1/11)^\top$, and let $\mathcal{F}=\{I_n\}_{n=0}^\infty$ be the
filtration associated to $\mathcal{Q}=\mathcal{Q}(C)$. 
Using Theorem~\ref{hilbert-basis-filtration} and
Procedure~\ref{covering-algebras-procedure}, we obtain
$$  
I_1=(t_1^2,\,t_2^5,\,t_3^{11},\,
t_2t_3^9,\,
t_2^2t_3^7,\,
t_2^3t_3^5,\, 
t_2^4t_3^3,\,
t_1t_3^6,\, 
t_1t_2t_3^4,\, 
t_1t_2^2t_3^2,\, 
t_1t_2^3),
$$
and the Rees algebra $\mathcal{R}(\mathcal{F})$ of $\mathcal{F}$ is
$S[I_1z,\, 
t_1t_2^3t_3^{10}z^2,\, 
t_1t_2^4t_3^8z^2]$. As $I_1$ is a primary ideal, we get $\mathcal{R}(I_1)=\mathcal{R}_s(I_1)
\subsetneq\mathcal{R}(\mathcal{F})$. In this example $\alpha_{\mathcal{F}}(1)=2$,
the bigheight $e$ of $I_1$ (i.e., $e$ is the largest height of an
associated prime of $I_1$)  is $3$, and the vertex set of $\mathcal{Q}$
is equal to $V(\mathcal{Q})=\{2e_1,\, 5e_2,\, 11e_3\}$. Thus, $\alpha(\mathcal{Q})=
\min\{|v|\colon\,v\in V(\mathcal{Q})\}=2$. By
Corollary~\ref{schrijver-constant}, $\alpha(\mathcal{Q})$ is the
Waldschmidt constant $\widehat{\alpha}(\mathcal{F})$ of the filtration
$\mathcal{F}$, that is
$$
\alpha(\mathcal{Q})=\widehat{\alpha}(\mathcal{F})=
\lim_{n\rightarrow\infty}{\alpha_{\mathcal{F}}(n)}/{n}=2.
$$
\quad Thus in this case $\widehat{\alpha}(\mathcal{F})=
(\alpha_{\mathcal{F}}(1)+e-1)/e$ (cf. \cite[Theorem~5.3]{Waldschmidt-Bocci-etal}). 
\end{example}

\begin{example}\label{non-strict-filtration}
Let $S=\mathbb{Q}[t_1,t_2]$ be a
polynomial ring, let $\mathcal{Q}(C)$ be the covering polyhedron of
the matrix $C=(3/2,\, 3/2)^\top$, and let $\mathcal{F}=\{I_n\}_{n=0}^\infty$ be the
filtration associated to $\mathcal{Q}(C)$. By 
Theorem~\ref{hilbert-basis-filtration} and adapting
Procedure~\ref{covering-algebras-procedure}, we get that $\mathcal{R}(\mathcal{F})$ is 
$\mathbb{Q}[t_1,t_2,t_1t_2z,t_1^2t_2^2z^3]$, $I_i$, $i=1,2,3$, are given by 
$$  
I_1=(t_1t_2),\, I_2=I_3=(t_1^2t_2^2),
$$
and $\mathcal{F}=\{I_n\}_{n=0}^\infty$ is not strict. The only vertex of $\mathcal{Q}(C)$ is $(2/3,2/3)$
(cf. Lemma~\ref{filtration}(b)).
\end{example}

\begin{example}[Irreducible filtration]\label{filtration3} 
Let $I$ be a monomial ideal of $S$. 
The filtration of the irreducible polyhedron
of $I$ is constructed as follows. Let $I=\mathfrak{q}_1\bigcap\cdots\bigcap\mathfrak{q}_m$ be the
irreducible decomposition of $I$, let $\alpha_i$ 
be the vector in $\mathbb{N}^s\setminus\{0\}$ such that 
$\mathfrak{q}_i=\mathfrak{q}_{\alpha_i}$ for $i=1,\ldots,m$, and let 
$B$ be the $s\times m$ matrix with column vectors
$\alpha_1^{-1},\ldots,\alpha_m^{-1}$ (Section~\ref{section-intro}).
The covering polyhedron $\mathcal{Q}(B)$ is 
${\rm IP}(I)$, the {irreducible polyhedron} of $I$. Then, by
Proposition~\ref{jan19-21}(a), one has
$$
\overline{\mathfrak{q}_1^n}\textstyle\bigcap\cdots\bigcap\overline{\mathfrak{q}_m^n}=
(\{t^a\vert\, a/n\in\mathcal{Q}(B)\}),\ n\geq 1,
$$
that is, the filtration
$\mathcal{F}=\{I_n\}_{n=0}^\infty$ associated to ${\rm IP}(I)$ is
given by $I_n=\bigcap_{i=1}^m\overline{\mathfrak{q}_i^n}
$ for $n\geq 1$ and $I_0=S$. Thus,
$I_1=\bigcap_{i=1}^m\overline{\mathfrak{q}_i}$ and 
$\mathcal{R}(\mathcal{F})=\bigoplus_{n=0}^\infty I_nz^n$ is the Rees
algebra of $\mathcal{F}$. 
\end{example}

\begin{example}\label{bowtie-example}
Let $S=\mathbb{Q}[t_1,\ldots,t_7]$ be a polynomial ring and let
$I=I(G)$ be the edge ideal of the graph $G$ of Figure.~\ref{figure1}.
\begin{figure}[ht]
\vspace{1cm}
\setlength{\unitlength}{.035cm}
\thicklines
\begin{picture}(10,5)
\put(-40,0){\circle*{3.6}}
\put(-40,6){$t_1$}
\put(0,0){\circle*{3.6}}
\put(0,6){$t_4$}
\put(40,0){\circle*{3.6}}
\put(36,6){$t_5$}
\put(80,20){\circle*{3.6}}
\put(85,20){$t_6$}
\put(80,-20){\circle*{3.6}}
\put(85,-20){$t_7$}
\put(-80,20){\circle*{3.6}}
\put(-80,-20){\circle*{3.6}}
\put(40,0){\line(2,-1){40}}
\put(40,0){\line(2,1){40}}
\put(80,-20){\line(0,1){40}}
\put(-95,20){$t_2$}
\put(-80,-20){\line(0,1){40}}
\put(-95,-20){$t_3$}
\put(-40,0){\line(-2,-1){40}}
\put(-40,0){\line(-2,1){40}}
\put(0,0){\line(1,0){40}}
\put(0,0){\line(-1,0){40}}
\end{picture}
\vspace{1cm}
\caption{Graph $G$ with non-normal edge ideal.}\label{figure1}
\end{figure}
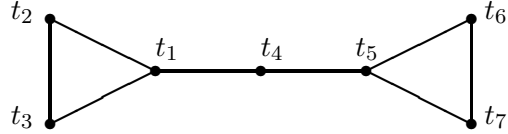
This ideal is not normal because $t_1t_2t_3t_5t_6t_7$ is in
$\overline{I\sp 3}\setminus I\sp 3$. Let $C$ be the incidence matrix
of $I^\vee=I_c(G)$ whose transpose is the following matrix 
$$
C^\top=\left[\begin{matrix}
0& 1& 1& 1& 0& 1& 1\cr 
0&1&1&1&1&0&1\cr
0&1&1&1&1&1&0\cr
1&0&1&0&1&0&1\cr
1&0&1&0&1&1&0\cr
1&0&1&1&0&1&1\cr
1&1&0&0&1&0&1\cr 
1&1&0&0&1&1&0\cr 
1&1&0&1&0&1&1 
\end{matrix}
\right].
$$
\quad The rows of $C^\top$ correspond to the minimal vertex covers 
of $G$ and also correspond to the associated primes of $I$. The
covering polyhedron of $I_c(G)$ is $\mathcal{Q}(C)$ and this is the symbolic polyhedron of 
$I$. The filtration
$\mathcal{F}=\{I_n\}_{n=0}^\infty$ of $\mathcal{Q}(C)$ is the filtration of symbolic powers
of $I$. In this case $\rho_{ic}(\mathcal{F})$ is $\rho_{ic}(I)$, the ic-resurgence
of $I$. Recall that $\rho_{ic}(I)$ is the
asymptotic resurgence $\widehat{\rho}(I)$ of $I$
\cite[Corollary~4.14]{Francisco-TAMS}. 
From \cite[Theorem~3.12, Corollary~4.14]{Francisco-TAMS}, 
\cite[Theorem~6.7(i)]{Waldschmidt-Bocci-etal}, and
Theorem~\ref{jun14-21}, one has
$$
\rho_{ic}(I)={2(\omega(G)-1)}/{\omega(G)}=\rho_{ic}(I_c(G))=4/3
$$ 
because $G$ is chordal, hence perfect
\cite[Proposition~5.5.2]{diestel}. Using \textit{Macaulay}$2$
\cite{mac2}, it is seen that $I^{(4)}\subset
I^3+(t_1t_2t_3t_5t_6t_7)\subset\overline{I^3}$. We will use this example to illustrate
our algorithm  to compute $\rho_{ic}(I)$ 
(Procedure~\ref{bowtie-procedure}). The input for this algorithm is
the incidence
matrix of $I$. For squarefree monomial ideals, an easy way to compute
$\rho_{ic}(I)$ is to use Algorithm~\ref{AS-code}. 
\end{example}

\begin{appendix}

%\section{Procedures for \textit{Normaliz}, \textit{PORTA}, and {\it Macaulay\/}$2$}\label{Appendix}
\section{Procedures}\label{Appendix}

In this appendix we give procedures for \textit{Normaliz} \cite{normaliz2}, \textit{PORTA} \cite{porta}, 
and \textit{Macaulay}$2$ \cite{mac2} that are used in the
examples of Section~\ref{examples-section} and present an algorithm to
compute the asymptotic resurgence of any squarefree monomial ideal. 

\begin{procedure}\label{non-normal-procedure}
Let $I$ be a monomial ideal and let
$\mathcal{Q}(I)$ be its covering
polyhedron. This procedure for
\textit{PORTA} \cite{porta} computes the vertices of $\mathcal{Q}(I)$ of
Example~\ref{non-normal}.

\begin{verbatim}
DIM = 3
VALID
7 7 7 
INEQUALITIES_SECTION
x1+2x2>=1
x2+2x3>=1
x1+2x3>=1
x1>=0
x2>=0
x3>=0
END
\end{verbatim}
\end{procedure}

\begin{procedure}\label{covering-algebras-procedure}
Let $\mathcal{Q}(C)$ be the covering polyhedron of a matrix
$C$ and let $\mathcal{F}=\{I_n\}_{n=0}^\infty$
be its  associated filtration. This procedure for
\textit{Normaliz} \cite{normaliz2} computes the Hilbert basis 
of the Simis cone ${\rm SC}(\mathcal{Q}(C))$ of $\mathcal{Q}(C)$ of Example~\ref{covering-algebras}.
Using Theorem~\ref{hilbert-basis-filtration}, we
obtain a finite generating set for
the Rees algebra $\mathcal{R}(\mathcal{F})$ of the filtration
$\mathcal{F}$. 

\begin{verbatim}
/* This computes the Hilbert basis 
of the Simis cone of a covering polyhedron */
amb_space auto
inequalities
[[1/2 1/5 1/11 -1] 
[1 0 0 0]
[0 1 0 0]
[0 0 1 0]
[0 0 0 1]]
\end{verbatim}
\end{procedure}

%hola

\begin{procedure}[Algorithm for the ic-resurgence]\label{bowtie-procedure}
Let $S=K[t_1,\ldots,t_s]$ be a polynomial ring over the field
$K=\mathbb{Q}$ and let $I\subset S$ be a squarefree monomial ideal of
height at least $2$, let
$G(I)=\{t^{v_1},\ldots,t^{v_q}\}$ be the minimal set of generators of
$I$ and let $\mathcal{F}=\{I^{(n)}\}_{n=0}^\infty$ be the filtration of
symbolic powers. Recall that $\mathcal{F}$ is the filtration associated
to the covering polyhedron of $I^\vee$. 
In this case the ic-resurgence of $\mathcal{F}$ is 
$\rho_{ic}(I)$, the ic-resurgence of $I$. This procedure gives an
 algorithm, using
\textit{Normaliz} \cite{normaliz2}, to compute
this number.  To compute
$\rho_{ic}(I)$, we need an efficient way to determine the polyhedron
$\mathcal{P}_j$ of feasible 
points of Theorem~\ref{lp-resurgence-formula} for each $1\leq j\leq
k$. 
%As is seen below, one of the main steps is to give a convenient way to compute the vertices of
%$\mathcal{Q}(A)$ using the Rees cone of $I$ defined in
%Eq.~\eqref{rees-cone-eq}. 

 The incidence matrix of $I$, denoted by $A$,
is the matrix with column vectors $v_1,\ldots,v_q$. 
Our algorithm is based on the computation of the support hyperplanes
of the \textit{Rees cone} ${\rm RC}(I)$ of $I$ which is the
finitely generated rational cone defined as \cite{normali}: 
\begin{equation}\label{rees-cone-eq}
{\rm RC}(I):=\mathbb{R}_+\{e_1,\ldots,e_s,(v_1,1),\ldots,(v_q,1)\}.
\end{equation}
\quad This is a rational polyhedral cone that has a unique irreducible representation 
\begin{equation}\label{supp-hyp}
{\rm RC}(I)=\left(\bigcap_{i=1}^{s+1}H^+_{e_i}\right)\bigcap
\left(\bigcap_{i=1}^mH^+_{(u_i,-1)}\right)
\bigcap\left(\bigcap_{i=1}^\ell H^+_{(\gamma_i,-d_i)}\right),
\end{equation}
where none of the closed halfspaces can be omitted from the
intersection, $u_1,\ldots,u_m$ are the exponent vectors of the 
minimal generators of $I^\vee$, $\gamma_i\in\mathbb{N}^s\setminus\{0\}$, 
$d_i\in\mathbb{N}\setminus\{0,1\}$, and the non-zero entries of
$(\gamma_i,-d_i)$ are relatively prime
\cite[Proposition~1.1.51, Theorem~14.1.1]{monalg-rev}. The hyperplanes 
defining the closed halfspaces of Eq.~\eqref{supp-hyp} are the
\textit{support hyperplanes} of the Rees cone of $I$. 
Setting $\beta_i=u_i$ for $i=1,\ldots,m$, $\beta_{m+i}=\gamma_i/d_i$
for $i=1,\ldots,\ell$, and $k=m+\ell$, by \cite[Theorem~3.1]{reesclu}
and Proposition~\ref{np-qa}, one has that the Newton polyhedron 
${\rm NP}(I)$ of $I$ is equal to the covering polyhedron
$\mathcal{Q}(B)$, where $B$ is the matrix with column vectors
$\beta_1,\ldots,\beta_k$. Thus, for each $1\leq j\leq k$, we can determine the
polyhedron $\mathcal{P}_j$ of feasible points of
Theorem~\ref{lp-resurgence-formula} 
by setting $c_i=u_i$ and $n_i=1$ for $i=1,\ldots,m$, and $n_{m+i}=d_i$ for
$i=1,\ldots,\ell$. The vertices
of $\mathcal{Q}(A)$ are precisely $\beta_1,\ldots,\beta_k$
\cite[Theorem~3.1]{reesclu}, that is, finding the support hyperplanes
of ${\rm RC}(I)$ is equivalent to finding the vertices of
$\mathcal{Q}(A)$. 

We illustrate our algorithm with the edge ideal $I$ of 
Example~\ref{bowtie-example}. This ideal is given by   
$$ 
I=(t_1t_2,\, t_2t_3,\, t_1t_3,\, t_1t_4,\, t_4t_5,\, t_5t_6,\,
t_6t_7,\, t_5t_7).
$$
\quad The first step is to put the transpose of $A$ 
in the following input file for \textit{Normaliz}. The rows of $A^\top$
correspond to the exponent vectors $v_1,\ldots,v_q$ of the monomials in $G(I)$.
\begin{verbatim}
/* This computes the integral closure 
of the Rees algebra and the support hyperplanes 
of the Rees cone */
amb_space 8
rees_algebra 8
1 1 0 0 0 0 0 
0 1 1 0 0 0 0 
1 0 1 0 0 0 0
1 0 0 1 0 0 0
0 0 0 1 1 0 0
0 0 0 0 1 1 0
0 0 0 0 0 1 1
0 0 0 0 1 0 1
\end{verbatim}
\quad Using \textit{Normaliz} we obtain that the monomial $t_1t_2t_3t_5t_6t_7$ is in
$\overline{I\sp 3}\setminus I\sp 3$, that is, $I$ is not normal, and
we also obtain the following full list of support hyperplanes of
${\rm RC}(I)$: 

\begin{verbatim}
24 support hyperplanes:
 0 0 0 0 0 0 0  1
 0 0 0 0 0 0 1  0
 0 0 0 0 0 1 0  0
 0 0 0 0 1 0 0  0
 0 0 0 1 0 0 0  0
 0 0 1 0 0 0 0  0
 0 1 0 0 0 0 0  0
 0 1 1 1 0 1 1 -1
 0 1 1 1 1 0 1 -1
 0 1 1 1 1 1 0 -1
 0 2 2 2 1 1 1 -2
 1 0 0 0 0 0 0  0
 1 0 1 0 1 0 1 -1
 1 0 1 0 1 1 0 -1
 1 0 1 1 0 1 1 -1
 1 1 0 0 1 0 1 -1
 1 1 0 0 1 1 0 -1
 1 1 0 1 0 1 1 -1
 1 1 1 1 1 1 1 -2
 1 1 1 1 2 0 2 -2
 1 1 1 1 2 2 0 -2
 1 1 1 2 0 2 2 -2
 2 0 2 1 1 1 1 -2
 2 2 0 1 1 1 1 -2
\end{verbatim}

For each row of this matrix that corresponds to a vertex of
$\mathcal{Q}(A)$, this
matrix will be 
transformed---by successively applying the
following rules---into a set of linear constraints that define 
a polyhedron $\mathcal{P}_j$ of feasible points of Theorem~\ref{lp-resurgence-formula}. The constraints will
be in a format that can be read by \textit{Normaliz} \cite{normaliz2}.
The rules to obtain the linear constraints are:

R1) Fix any row vector $a_1\cdots a_{s+1}$, $s=7$, of this matrix
with $a_{s+1}\notin\{0,1\}$ and add the linear constraint 
$a_1\cdots a_s\ 0 \ a_{s+1}\ 1\ <=\ 0$ at the end of this matrix. We fix the row $1\ 1\ 1\ 1\
1\ 1\ 1\,-2$ that
corresponds to the vertex  $1/2(1, 1, 1, 1, 1, 1, 1)$ of the
covering polyhedron $\mathcal{Q}(A)$. 

R2) Remove any row $b_1\cdots b_{s+1}$ whose last entry $b_{s+1}$ is not in $\{0,-1\}$.

R3) Replace any row $b_1\cdots b_{s+1}$ with $b_{s+1}=-1$ or
$b_{s+1}=0$ by
the linear constraint $b_1\cdots b_{s+1}\ 0\ 0\ >=\ 0$.

R4) Add the constraints $0\cdots 0\ 1\ 0\ =\ 1$, 
$0\cdots 0\ 1\ 0 \ -1\ >=\ 0$ and $0\cdots 0\ 1\ >=\ 0$, where the left hand
side of each of these constraints has $s+3$ digits.

Applying these rules successively to the matrix of support hyperplanes
we obtain the following input file for \textit{Normaliz} \cite{normaliz2}: 
 
\begin{verbatim}
/* This is one of the polyhedrons of feasible 
points of the linear programs that are used to 
compute the ic-resurgence */
amb_space 10
constraints 20
 0 0 0 0 0 0 0 0 1 0 = 1
 0 1 1 1 0 1 1 -1 0 0 >= 0
 0 1 1 1 1 0 1 -1 0 0 >= 0
 0 1 1 1 1 1 0 -1 0 0 >= 0
 1 0 1 0 1 0 1 -1 0 0 >= 0
 1 0 1 0 1 1 0 -1 0 0 >= 0
 1 0 1 1 0 1 1 -1 0 0 >= 0
 1 1 0 0 1 0 1 -1 0 0 >= 0
 1 1 0 0 1 1 0 -1 0 0 >= 0 
 1 1 0 1 0 1 1 -1 0 0 >= 0
 0 0 0 0 0 0 0 1 0 -1 >= 0
 1 0 0 0 0 0 0 0 0 0 >= 0
 0 1 0 0 0 0 0 0 0 0 >= 0
 0 0 1 0 0 0 0 0 0 0 >= 0
 0 0 0 1 0 0 0 0 0 0 >= 0
 0 0 0 0 1 0 0 0 0 0 >= 0
 0 0 0 0 0 1 0 0 0 0 >= 0
 0 0 0 0 0 0 1 0 0 0 >= 0
 0 0 0 0 0 0 0 0 0 1 >= 0
 1 1 1 1 1 1 1 0 -2 1 <= 0
VerticesFloat
ExtremeRays
VerticesOfPolyhedron
\end{verbatim}
\quad Running \textit{Normaliz} for all possible choices of $a_1\cdots
a_{s+1}$, $s=7$, $a_{s+1}\notin\{0,1\}$, by
Theorem~\ref{lp-resurgence-formula}, we obtain that 
$\rho_{ic}(I)=4/3=1.33333$. The optimal value of the linear programs of
Theorem~\ref{lp-resurgence-formula} is attained at the vertex
$$
(2/3,\,2/3,\,2/3,\,0,\,0,\,0,\,0,\,4/3,\,1,\,0)
$$
of the polyhedron of feasible points that corresponds to 
$1\ 1\ 1\ 1\ 1\ 1\ 1\,-2$.  
\end{procedure}

\begin{algorithm}\label{AS-code}
The function ``rhoichypes'' in the following procedure for
\textit{Macaulay}$2$ \cite{mac2} computes the asymptotic resurgence of
the ideal $I$ of Example~\ref{bowtie-example} using the algorithm described in
Procedure~\ref{bowtie-procedure}. The input for this function
is the matrix whose rows are the generators of the Rees
cone of $I$. This procedure uses the interface of \textit{Macaulay}$2$
to \textit{Normaliz} \cite{normaliz2}. To compute other examples, in
the next procedure simply change the polynomial ring in line $4$ and the matrix starting
at line $5$. 

\begin{verbatim}
restart
loadPackage("Normaliz",Reload=>true)
loadPackage("Polyhedra", Reload => true)
 R=QQ[x_1..x_8];
l={{1, 0, 0, 0, 0, 0, 0, 0},
   {0, 1, 0, 0, 0, 0, 0, 0},
   {0, 0, 1, 0, 0, 0, 0, 0},
   {0, 0, 0, 1, 0, 0, 0, 0},
   {0, 0, 0, 0, 1, 0, 0, 0},
   {0, 0, 0, 0, 0, 1, 0, 0},
   {0, 0, 0, 0, 0, 0, 1, 0},
   {1, 1, 0, 0, 0, 0, 0, 1},
   {0, 1, 1, 0, 0, 0, 0, 1},
   {1, 0, 1, 0, 0, 0, 0, 1},
   {1, 0, 0, 1, 0, 0, 0, 1},
   {0, 0, 0, 1, 1, 0, 0, 1},
   {0, 0, 0, 0, 1, 1, 0, 1},
   {0, 0, 0, 0, 0, 1, 1, 1},
   {0, 0, 0, 0, 1, 0, 1, 1}}
L=for i in l list R_i
nmzFilename="rproj1";
intclToricRing L;
--The support hyperplanes of the Rees cone of I are:
hypes=readNmzData("sup")
rhoichypes = hypes -> (
choices = select (entries hypes, l-> not 
isSubset({last l}, {0,1}));
possibilities = for i to #choices-1 list
( 
l = choices_i;
l' = apply( drop(l,-1)|{0, last(l), 1}, a-> a_ZZ);
b  = {0};
s = select (entries hypes, l-> isSubset({last l}, 
set {0, -1}));
s' = apply(s, l -> -l |{0,0});
s' = apply(s', a-> apply(a, c-> c_ZZ));
b = b | for i to #s'-1 list 0;
v = for i to #l-2 list 0;
t = { v | {-1, 0, 1}, v | {0, 0, -1} };
t' = { v | {0, 1, 0} };
t = apply(t, a-> apply(a, c-> c_ZZ));
b = b | { 0, 0 };
b = apply(b, c-> c_ZZ);
A = matrix({l'} | s' | t);
b = transpose matrix{b};
C = matrix t';
d = matrix{{1}};
P = polyhedronFromHData( A, b, C, d);
vert = vertices P;
max flatten entries vert^{numrows vert -3});
return max possibilities
)
--This gives the ic-resurgence of I which is equal to 
--the asymptotic resurgence of I
rhoichypes hypes
\end{verbatim}
\end{algorithm}

\end{appendix}

%\noindent {\bf Acknowledgments.} We thank the referee for a careful
%reading of the paper and for the improvements suggested. 

\section*{Acknowledgments} 
Computations with \textit{Normaliz} \cite{normaliz2}, \textit{PORTA} \cite{porta}, 
and \textit{Macaulay}$2$  were important to gain a better understanding of the monomial
ideals and algebras of filtrations 
of covering polyhedra. 

\bibliographystyle{plain}

\end{document}